\DeclareMathOperator\supp{supp}
\DeclareMathOperator*{\argmin}{\arg\!\min}
\DeclareMathOperator{\sgn}{sgn}
\newcommand{\mcL}{\mathcal{L}}
\newcommand{\mbR}{\mathbb{R}}
\newcommand{\mbRn}{{\mathbb{R}^n}}
\def \xb{{\bf x}}
\def \yb{{\bf y}}
\def \xb{\mathbf{x}}
\def \xbj{{\xb_j}}
\def \yb{\mathbf{y}}
\newtheorem{thm}{Theorem}[section]
\newtheorem{lem}[thm]{Lemma}
\newtheorem{rem}[thm]{Remark}
\journal{...}
\def\ps@pprintTitle{%
   \let\@oddhead\@empty
   \let\@evenhead\@empty
   \def\@oddfoot{\reset@font\hfil\thepage\hfil}
   \let\@evenfoot\@oddfoot
}
\newcommand{\vertiii}[1]{{\left\vert\kern-0.25ex\left\vert\kern-0.25ex\left\vert #1 
    \right\vert\kern-0.25ex\right\vert\kern-0.25ex\right\vert}}
\begin{document}

\begin{frontmatter}

%% Title, authors and addresses

\title{Efficient optimization-based quadrature for variational discretization of nonlocal problems}

%% use the tnoteref command within \title for footnotes;
%% use the tnotetext command for the associated footnote;
%% use the fnref command within \author or \address for footnotes;
%% use the fntext command for the associated footnote;
%% use the corref command within \author for corresponding author footnotes;
%% use the cortext command for the associated footnote;
%% use the ead command for the email address,
%% and the form \ead[url] for the home page:
%%
%% \title{Title\tnoteref{label1}}
%% \tnotetext[label1]{}
%% \author{Name\corref{cor1}\fnref{label2}}
%% \ead{email address}
%% \ead[url]{home page}
%% \fntext[label2]{}
%% \cortext[cor1]{}
%% \address{Address\fnref{label3}}
%% \fntext[label3]{}

%% use optional labels to link authors explicitly to addresses:
%% \author[label1,label2]{<author name>}
%% \address[label1]{<address>}
%% \address[label2]{<address>}

\author[ucsd-mae]{Marco Pasetto\corref{cor1}}\ead{mpasetto@ucsd.edu}
\author[unilu]{Zhaoxiang Shen}
\author[sandia-ca]{Marta D’Elia}
\author[ucsd-math]{Xiaochuan Tian}
\author[sandia-nm]{Nathaniel Trask}
\author[ucsd-mae]{David Kamensky}
\cortext[cor1]{Corresponding author}

\address[ucsd-mae]{Department of Mechanical and Aerospace Engineering, University of California, San Diego, La Jolla, CA 92093, USA}
\address[unilu]{Department of Engineering, Faculty of Science, Technology and Medicine, University of Luxembourg, Esch-sur-Alzette 4365, Luxembourg}
\address[sandia-ca]{Computational Science and Analysis, Sandia National Laboratories, Livermore, CA 94550, USA}
\address[ucsd-math]{Department of Mathematics, University of California, San Diego, La Jolla, CA 92093, USA}
\address[sandia-nm]{Center for Computing Research, Sandia National Laboratories, Albuquerque, NM 87185, USA}

\begin{abstract}
%% Text of abstract
Casting nonlocal problems in variational form and discretizing them with the finite element (FE) method facilitates the use of nonlocal vector calculus to prove well-posedeness, convergence, and stability of such schemes. Employing an FE method also facilitates meshing of complicated domain geometries and coupling with FE methods for local problems. However, nonlocal weak problems involve the computation of a double-integral, which is computationally expensive and presents several challenges.  In particular, the inner integral of the variational form associated with the stiffness matrix is defined over the intersections of FE mesh elements with a ball of radius $\delta$, where $\delta$ is the range of nonlocal interaction. Identifying and parameterizing these intersections is a nontrivial computational geometry problem. In this work, we propose a quadrature technique where the inner integration is performed using quadrature points distributed over the full ball, without regard for how it intersects elements, and weights are computed based on the generalized moving least squares method. Thus, as opposed to all previously employed methods, our technique does not require element-by-element integration and fully circumvents the computation of element--ball intersections. This paper considers one- and two-dimensional implementations of piecewise linear continuous FE approximations, focusing on the case where the element size $h$ and the nonlocal radius $\delta$ are proportional, as is typical of practical computations. When boundary conditions are treated carefully and the outer integral of the variational form is computed accurately, the proposed method is asymptotically compatible in the limit of $h\sim\delta\to 0$, featuring at least first-order convergence in $L^2$ for all dimensions, using both uniform and nonuniform grids. Moreover, in the case of uniform grids, the proposed method passes a patch test and, according to numerical evidence, exhibits an optimal, second-order convergence rate. Our numerical tests also indicate that, even for nonuniform grids, second-order convergence can be observed over a substantial pre-asymptotic regime.
\end{abstract}

\begin{keyword}
Nonlocal models \sep finite element discretizations \sep generalized moving least squares \sep numerical quadrature \sep peridynamics
%% keywords here, in the form: keyword \sep keyword

%% MSC codes here, in the form: \MSC code \sep code
%% or \MSC[2008] code \sep code (2000 is the default)

\end{keyword}

\end{frontmatter}

%%
%% Start line numbering here if you want
%%
%\linenumbers

%% main text

%%%%%%%%%%%%%%%%%%%%%%%%%%%%%%%%%%%%%%%%%%%%%
%%%%%%%%%%%%%%%%%%%%%%%%%%%%%%%%%%%%%%%%%%%%%
\section{Introduction}
\label{sec:Introduction}
Nonlocal models have become viable alternatives to partial differential equations (PDEs) for applications where small-scale effects affect the global behavior of a system or when discontinuities in the quantity of interest make it impractical to use differential operators. In fact, nonlocal operators embed length scales in their definitions and allow for irregular functions.  For these reasons, nonlocal models are currently employed in several scientific and engineering applications including surface or subsurface transport 
\cite{Benson2000,Benson2001,d2021analysis,Deng2004,Schumer2003,Schumer2001},
fracture mechanics
\cite{Ha2011,Littlewood2010,Silling2000},
turbulence
\cite{Scalar_FSGS,DiLeoni-2020,Pang2020},
image processing
\cite{Buades2010,DElia2019imaging,Gilboa2007}
and stochastic processes
\cite{Burch2014,DElia2017,Meerschaert2012,Metzler2000,Metzler2004}.

Nonlocal operators are integral operators that embed length scales in the domain of integration; as such, they allow one to model long-range forces within the length scale and to reduce the regularity requirements on the solutions. The most general form of nonlocal Laplace operator is given by \cite{DElia2020Unified}

\begin{equation*}
\mcL_\delta u(\xb) = 2\int_\mbRn 
(u(\yb)-u(\xb)) \gamma(\xb,\yb)\,d\yb,
\end{equation*}
where $u:\mbRn\to\mbR$ is a scalar function and $\gamma$ is a symmetric {\it kernel} function whose support is $\mathscr{H}{(\mathbf{x},\delta)}$, the ball centered at $\xb$ of radius $\delta$, the so-called horizon or interaction radius. In most cases, the ball is understood in the Euclidean sense (to maintain rotational invariance), but recent works also employ more general balls, including $\ell^\infty$ balls, see, e.g. \cite{capodaglio2019,xu2021feti,xu2021machine}. The function $\gamma$ determines the function space that the nonlocal solution belongs to. Its choice is nontrivial and non-intuitive; in fact, the selection of the optimal kernel is a widely studied research question \cite{Pang2020,burkovska2020,DElia2014DistControl,DElia2016ParamControl,Gulian2019,Pang2019fPINNs,Pang2017discovery,Xu2020learning,You2020Regression,You2020aaai,you2021data}. 

Because of the integral nature of nonlocal operators, the discretization and numerical solution of nonlocal equations raises several unresolved challenges. These include the design of accurate and efficient discretization schemes and the development of efficient numerical solvers \cite{AinsworthGlusa2018,Capodaglio2020DD,acta20,DEliaFEM2020,Pasetto2019,Pasetto2018,silling2005meshfree,Wang2010}. With the ultimate goal of easily handling nontrivial domains and possibly using mesh adaptivity, this work focuses on variational discretizations and, specifically, the finite element method. However, we point out that the nonlocal literature offers a broad class of meshfree techniques, widely used at the engineering level. We refer the interested reader to, e.g., \cite{Pasetto2018,silling2005meshfree,Chen2006meshless,parks2012peridigm,parks2010lammps,trask2019asymptotically}. One advantage of the FE method is that the nonlocal vector calculus facilitates its numerical analysis. This theoretical framework, first introduced in \cite{Gunzburger2010}, further developed in \cite{Du2013}, and generalized in \cite{DElia2020Unified}, allows us to cast nonlocal equations in a variational setting and analyze them in the same way as PDEs. Using this framework, one can prove well-posedness, convergence, and stability of nonlocal FE schemes. Nonetheless, variational discretizations introduce further computational challenges due to the presence of an additional integration over the domain of the problem. In fact, the nonlocal weak problem associated with the operator $\mcL_\delta$ involves the computation of a double integral. Specifically, the core computation required by standard codes to assemble the FE stiffness matrix is given by an integral of the form

\begin{equation}\label{eq:double-int}
\int_{\Omega^h_i}\int_{\Omega^h_j\cap \mathscr{H}{(\mathbf{x},\delta)}}
[\psi_k(\yb)-\psi_k(\xb)]\gamma(\xb,\yb)[\psi_l(\yb)-\psi_l(\xb)]\,d\yb\,d\xb,
\end{equation}
where $\Omega^h_i$ is the $i$-th element of the partition and $\psi_k$ is the $k$-th FE basis function.  (See Section \ref{sec:discrete-form} for a complete formulation.) In the formula above, we have purposely written the inner domain of integration explicitly, to highlight the fact that, prior to numerically evaluating the integral, we must identify the region of the $j$-th element that overlaps with the support of the kernel function, since a na\"ive, global integration over the whole element would not guarantee numerical convergence of the overall scheme. Identifying this region is a nontrivial, time-consuming task. Furthermore, the presence of a double integral inevitably adds computational cost and it is often the case that the integrand function is singular, requiring the use of sophisticated, possibly adaptive quadrature rules. 

A thorough description of the computational challenges that arise in the computation of the double integral \eqref{eq:double-int} can be found in \cite{DEliaFEM2020}. For the case of finite horizon, the authors of \cite{DEliaFEM2020} propose efficient ways to circumvent the problem of finding intersections between FEs and nonlocal neighborhoods by introducing the concept of ``approximate balls'' given by FE patches that roughly approximate $\mathscr{H}{(\mathbf{x},\delta)}$; their results indicate that in the case of piecewise linear FE spaces, optimal numerical convergence can be preserved. Alternatively, in \cite{aulisa2021efficient}, the authors propose a technique that allows one to compute the inner integral over the whole element $\Omega^h_j$ by introducing a smoothing of the kernel function. The smoothed kernel is still compactly supported, but it continuously decays to zero, allowing for simple Gaussian quadrature rules over each FE. 

In this work, under the assumption that the discretization parameter $h$ (the size of the FE) and the nonlocal radius $\delta$ are proportional, we propose a change of perspective and introduce a technique where the inner integration is performed over the ball $\mathscr{H}{(\mathbf{x},\delta)}$ rather than on a single element, i.e., the core computation in the stiffness matrix assembly now becomes
\begin{equation}\label{eq:double-int-B}
\int_{\Omega^h_i}\int_{\mathscr{H}{(\mathbf{x},\delta)}}
[\psi_k(\yb)-\psi_k(\xb)]\gamma(\xb,\yb)[\psi_l(\yb)-\psi_l(\xb)]\,d\yb\,d\xb,
\end{equation}
where we utilize special quadrature rules for the numerical computation of the inner integral. Specifically, we consider quadrature rules based on the generalized moving least squares (GMLS) method, successfully used for strong-form meshfree discretizations of nonlocal problems in \cite{trask2019asymptotically,Leng2021_AsymptoticallyCR,Gross2020}. 
%The GMLS-based quadrature rule was introduced in \cite{trask2019asymptotically} for meshfree discretizations of strong-form nonlocal problems. 
The main idea behind this approach is to determine the quadrature weights associated with quadrature points (the meshfree discretization nodes in a meshfree setting) by solving an equality constrained optimization problem (see Section \ref{sec:GMLS_construction} for a thorough discussion).
The introduction of a technique that fully circumvents the computation of element--ball intersections and that allows for the use of global quadrature rules over the support of the kernel function is the major contribution of this work. Additionally, the technique we propose requires minimal implementation effort, as the GMLS subroutine can be embedded into an existing FE code. As such, we envision this technique as a key component of agile FE engineering codes. In this work, we consider one- and two-dimensional implementations of piecewise linear continuous FE approximations. When boundary conditions are carefully treated and when the outer integral in \eqref{eq:double-int-B} is accurately computed, this method is asymptotically compatible in the limit of $h$ and $\delta$ vanishing and features first-order convergence in the $L^2$ norm for all dimensions and for both uniform and nonuniform grids. Furthermore, in the case of uniform grids, the proposed method is patch-test consistent (i.e., it is machine-precision accurate for linear solutions) and, according to numerical evidence, features an optimal, second-order convergence rate. Our numerical tests also indicate that, even for nonuniform grids, second-order convergence may be observed in the pre-asymptotic regime. Another contribution of the current work is a preliminary theoretical study, where, in a simplified, uniformly-discretized, one-dimensional setting, we show that the proposed method features optimal first-order numerical convergence in the $H^1$ norm. These results set the groundwork for more rigorous studies that we will pursue in future works. 

%%%%%%%%%%%%%%%%%%%%%%%%%%%%%%%%%%%%%%%%%%%%%
\paragraph{Paper outline} Section \ref{sec:Nonlocal_diff_model} introduces the nonlocal Laplace operator and the corresponding volume-constrained nonlocal problem in its strong and weak form. Section \ref{sec:GMLS_construction} introduces the GMLS technique for the numerical evaluation of general integrals. In Section \ref{sec:discrete-form}, we formulate the discrete variational problem for a FE discretization and we provide a detailed description of the quadrature rules and the resulting, fully-discrete problem. We also introduce a technique for the treatment of nonlocal boundary conditions that guarantees an improved convergence behavior. In Section \ref{sec:convergence}, we introduce the concept of asymptotic compatibility and prove that, under certain assumptions, the proposed method features linear numerical convergence in the $H^1$ norm with respect to the mesh size (and, as a consequence, with respect to the interaction radius). Section \ref{sec:numerics} illustrates the accuracy of the proposed method with several one- and two-dimensional numerical tests on uniform and non-uniform grids using piecewise linear continuous FE discretizations. %We first show the improved convergence behavior induced by the special treatment of the nonlocal boundary condition and then show that, in practical regimes, our scheme is second-order accurate with respect to $h$ and $\delta$ and is, hence, asymptotically compatible. Lastly, we make some concluding remarks in Section \ref{sec:conclusion}.
We first show the improved convergence behavior induced by the special treatment of the nonlocal boundary condition and then show that, in the $L^2$ norm, our scheme is second-order accurate for uniform discretizations and at least first-order accurate for non-uniform ones, with respect to $h$ and $\delta$ and is, hence, asymptotically compatible. Moreover, we also show that convergence in the $H^1$ norm is consistent with the theoretical predictions discussed in Section \ref{sec:convergence}. Lastly, we make some concluding remarks in Section \ref{sec:conclusion}. 

%%%%%%%%%%%%%%%%%%%%%%%%%%%%%%%%%%%%%%%%%%%%%
%%%%%%%%%%%%%%%%%%%%%%%%%%%%%%%%%%%%%%%%%%%%%
\section{Nonlocal Laplace operator and model problem}\label{sec:Nonlocal_diff_model}

In this section we set the notation that will be used throughout the paper and introduce relevant definitions and results. In particular, we formulate the strong and weak forms of the nonlocal Poisson problem used to describe the technique proposed in this work. 

Let $\gamma(\mathbf{x},\mathbf{y}):\mathbb{R}^d\times\mathbb{R}^d\rightarrow\mathbb{R}^{+}_0$ be a symmetric, i.e., $\gamma(\mathbf{x},\mathbf{y})=\gamma(\mathbf{y},\mathbf{x})$, non-negative kernel\footnote{Examples and analysis of nonsymmetric and sign-changing kernels can be found in \cite{d2017nonlocal,felsinger2015dirichlet} and \cite{mengesha2013analysis}, respectively.} with bounded support in the norm-induced ball of radius $\delta$, i.e.,

\begin{equation}\label{ball}
\mathscr{H}{(\mathbf{x},\delta)}\coloneqq\supp(\gamma(\mathbf{x},\cdot))=
\left \{\mathbf{y}\in {\mathbb{R} ^{d}}:
\lvert \mathbf{y}-\mathbf{x} \rvert_{\ell^{\tilde{p}}} \leq \delta
\right \},    
\end{equation}
where $\delta>0$ is referred to as the \textit{horizon} and $\tilde{p}\in\left[1,\infty\right]$. In this work, without loss of generality, we consider Euclidean balls, i.e., we take $\tilde{p}=2$. Furthermore, we restrict ourselves to kernels of the form

%\textcolor{red}{[Although we have said that the support of $\gamma$ is within a ball, should we write the definition of $\gamma$ as the truncated kernel? ]}
%[MP]: I changed the original Equations as follows, [XT] please let me know if you had something else in mind 
\begin{equation}\label{kernel_form1}
\gamma{(\mathbf{x},\mathbf{y})}=\left\{\begin{aligned}
         \ \frac{\zeta}{\delta^{d+2}} \quad \ &\rm{for}\ \lvert\mathbf{y}-\mathbf{x}\rvert_{\ell^{\tilde{p}}}\leq\delta,\\\
        \ 0 \ \ \quad\ &\rm{for}\ \lvert\mathbf{y}-\mathbf{x}\rvert_{\ell^{\tilde{p}}}>\delta,\\
\end{aligned}\right.  
\end{equation}
and

\begin{equation}\label{kernel_form2}
\gamma{(\mathbf{x},\mathbf{y})}=\left\{\begin{aligned}
         \ \frac{\zeta}{\delta^{d+1}\lvert \mathbf{y}-\mathbf{x} \rvert_{\ell^{\tilde{p}}}} \quad \ &\rm{for}\ \lvert\mathbf{y}-\mathbf{x}\rvert_{\ell^{\tilde{p}}}\leq\delta,\\\
        \ 0 \ \ \quad\ &\rm{for}\ \lvert\mathbf{y}-\mathbf{x}\rvert_{\ell^{\tilde{p}}}>\delta,\\
\end{aligned}\right.  
\end{equation}

\noindent
with $\zeta\in\mathbb{R}^+$. 

%{\color{purple}[DK: I think there is some inherent loss of generality, but $p=2$ is the only physically interesting case, because every other choice would not be invariant under rotation (and thus wouldn't make sense for an objective physical theory).] \color{red}[Marta: WLOG refers to the fact that the convergence to the local limit and the whole nonlocal calculus theory is not compromised when $\tilde p\neq 2$; in the introduction we mention works where $p=\infty$ is used. This is indeed common in image processing. Physically, non-Euclidean balls make sense in the presence of anisotropy. In fact, the isotropic case is the only one for which the Euclidean ball makes sense. I don't think we need to be this specific though, as we already have refs in the intro.]}

Let $\Omega\subset\mathbb{R}^d$ be a bounded open domain. Its associated \textit{interaction domain} is defined as the set of points outside of $\Omega$ that interact with points inside of it (see Figure \ref{Interaction_domain}), i.e.,

\begin{equation}\label{interaction domain}
\mathscr{B}\Omega\coloneqq
\left \{\mathbf{y}\in {\mathbb{R} ^{d}}\setminus\Omega:
\exists \mathbf{x}\in\Omega \; \text{such that} \;\lvert \mathbf{y}-\mathbf{x} \rvert_{\ell^{\tilde{p}}} \leq \delta
\right \}.    
\end{equation}
Note that $\mathscr{B}\Omega\cap\partial\Omega=\partial\Omega$, where $\partial\Omega$ is the boundary of $\Omega$ \cite{delia2020cookbook}. 

\begin{figure}[H] 
\begin{center}
\scalebox{0.75}{\includegraphics[trim = 50mm 375mm 50mm 375mm, clip=true,width=1\textwidth]{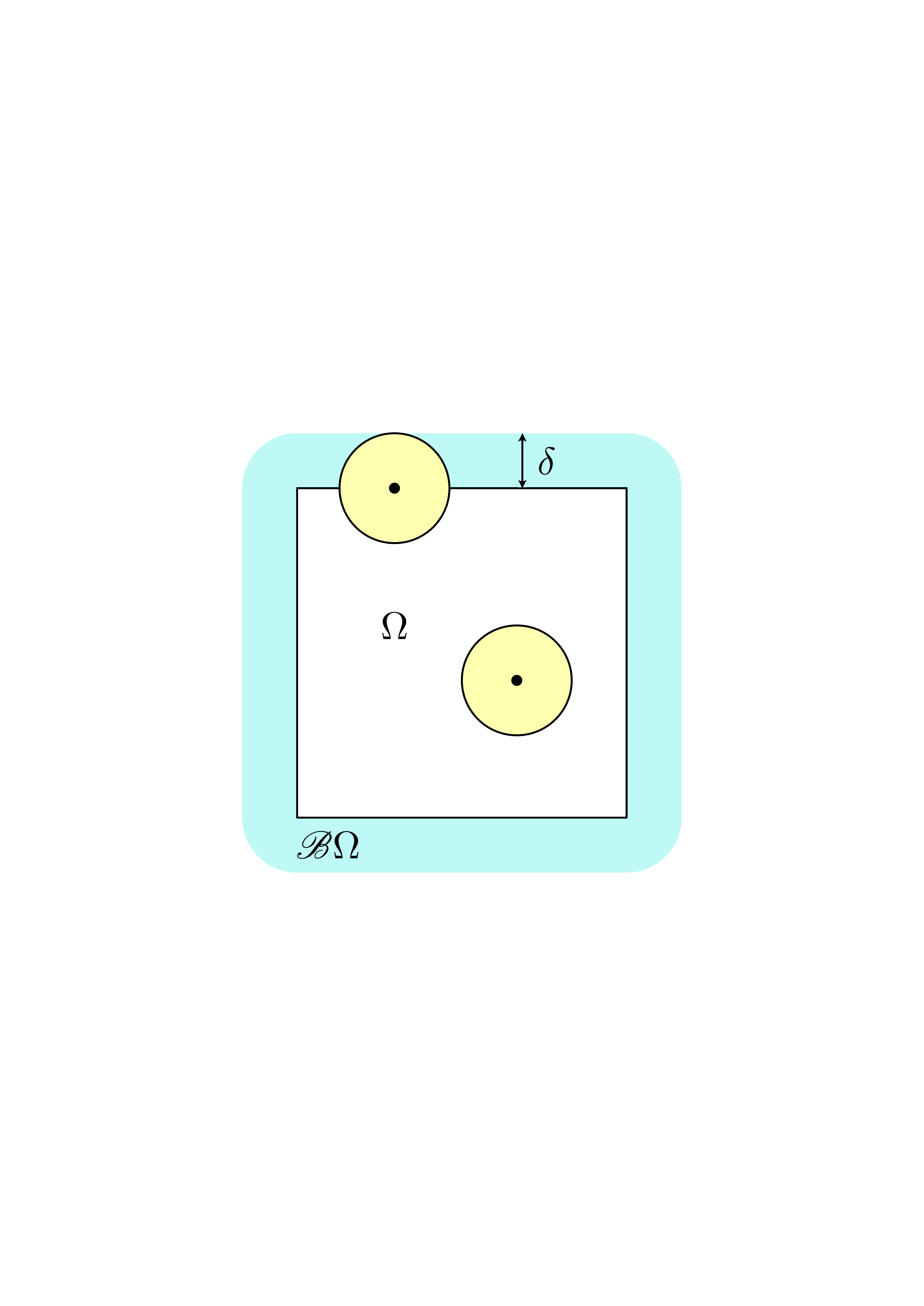}} 
\caption{A square domain $\Omega$ (in white) with its corresponding interaction domain of thickness $\delta$ (in light-blue). In yellow, two balls of radius $\delta$, centred at two points in $\Omega\cup\mathscr{B}\Omega$, depicted by black dots, one of which is in $\Omega$, while the other is located on the boundary $\partial\Omega$ between $\Omega$ and $\mathscr{B}\Omega$.}
\label{Interaction_domain}
\end{center}
\end{figure}

We introduce the strong form of a nonlocal volume-constrained Poisson problem \cite{Leng2021_AsymptoticallyCR,Du2012_NonlocalVolumes,Gunzburger2010_NonlocalVolumes2,DElia2020_Acta}: given $b:{\Omega}\rightarrow{\mathbb{R}}$ and $g:\mathscr{B}\Omega\rightarrow{\mathbb{R}}$, find $u:\Omega\cup\mathscr{B}\Omega\rightarrow{\mathbb{R}}$, such that

\begin{equation}
\left\{\begin{aligned}
    -\mathcal{L}_{\delta}u(\mathbf{x})
    &=b(\mathbf{x}), \quad \ \mathbf{x}\in \Omega,\\
    u(\mathbf{x})&=g(\mathbf{x}), \quad\  \mathbf{x}\in\mathscr{B}\Omega,
\end{aligned}\right.
    \label{strong form_NLD.}
\end{equation}
where $\mathcal{L}_{\delta}u(\mathbf{x})$ is the nonlocal Laplacian

\begin{equation}
\begin{aligned}
    \mathcal{L}_{\delta}u(\mathbf{x})
    &=2\int_{\Omega\cup\mathscr{B}\Omega}\gamma(\mathbf{x},\mathbf{y})(u(\mathbf{y})-u(\mathbf{x}))d\mathbf{y},
\end{aligned}
    \label{nonlocal_Laplacian.}
\end{equation}
and where the second equation is a \textit{Dirichlet volume constraint}. In this work, we only consider Dirichlet constraints\footnote{Examples of the numerical treatment of Neumann constraints can be found in, e.g., \cite{d2020physically,d2021prescription}}.

\subsection{Weak formulation}\label{weak_form}
To derive the weak formulation associated with the problem in Eq.~(\ref{strong form_NLD.}), we multiply the first equation in (\ref{strong form_NLD.}) by a test function $v(\mathbf{x}):\Omega\cup\mathscr{B}\Omega\rightarrow\mathbb{R}$ and then integrate over $\Omega$, i.e.

\begin{equation}
\begin{aligned}
    0&=\int_{\Omega}v(\mathbf{x})\left[-\mathcal{L}_{\delta}u(\mathbf{x})-b(\mathbf{x})\right]d\mathbf{x}\\
    &=-2\int_{\Omega}v(\mathbf{x})\int_{\Omega\cup\mathscr{B}\Omega}\gamma(\mathbf{x},\mathbf{y})(u(\mathbf{y})-u(\mathbf{x}))d\mathbf{y}d\mathbf{x}-\int_{\Omega}v(\mathbf{x})b(\mathbf{x})d\mathbf{x}.
\end{aligned}
    \label{weak_form_s1}
\end{equation}
Now we recast $-2\int_{\Omega}v(\mathbf{x})\int_{\Omega\cup\mathscr{B}\Omega}\gamma(\mathbf{x},\mathbf{y})(u(\mathbf{y})-u(\mathbf{x}))d\mathbf{y}d\mathbf{x}$ as

\begin{equation}
\begin{aligned}
    &-2\int_{\Omega}v(\mathbf{x})\int_{\Omega\cup\mathscr{B}\Omega}\gamma(\mathbf{x},\mathbf{y})(u(\mathbf{y})-u(\mathbf{x}))d\mathbf{y}d\mathbf{x}\\
    =&-2\int_{\Omega}v(\mathbf{x})\int_{\Omega\cup\mathscr{B}\Omega}\frac{1}{2}\left[\gamma(\mathbf{x},\mathbf{y})(u(\mathbf{y})-u(\mathbf{x}))-\gamma(\mathbf{x},\mathbf{y})(u(\mathbf{x})-u(\mathbf{y}))\right]d\mathbf{y}d\mathbf{x}\\
    =&-\int_{\Omega}v(\mathbf{x})\int_{\Omega\cup\mathscr{B}\Omega}\left[\gamma(\mathbf{x},\mathbf{y})(u(\mathbf{y})-u(\mathbf{x}))-\gamma(\mathbf{y},\mathbf{x})(u(\mathbf{x})-u(\mathbf{y}))\right]d\mathbf{y}d\mathbf{x},
\end{aligned}
    \label{weak_form_s2}
\end{equation}
where we employed the symmetry of $\gamma$.
As is standard in the presence of Dirichlet conditions, we require $v(\mathbf{x})$ to be zero on $\mathscr{B}\Omega$. We then apply \textit{Green's first identity of nonlocal vector calculus} \cite{Gunzburger2010_NonlocalVolumes2} to the term in Eq.~(\ref{weak_form_s2}), which gives us, with $v(\mathbf{x})=0$ for $\mathbf{x}\in\mathscr{B}\Omega$,

\begin{equation}
\begin{aligned}
    &-\int_{\Omega}v(\mathbf{x})\int_{\Omega\cup\mathscr{B}\Omega}\left[\gamma(\mathbf{x},\mathbf{y})(u(\mathbf{y})-u(\mathbf{x}))-\gamma(\mathbf{y},\mathbf{x})(u(\mathbf{x})-u(\mathbf{y}))\right]d\mathbf{y}d\mathbf{x}\\
    =&\int_{\Omega\cup\mathscr{B}\Omega}\int_{\Omega\cup\mathscr{B}\Omega}\left[v(\mathbf{y})-v(\mathbf{x})\right]\gamma(\mathbf{x},\mathbf{y})\left[u(\mathbf{y})-u(\mathbf{x})\right]d\mathbf{y}d\mathbf{x}.
\end{aligned}
    \label{weak_form_s3}
\end{equation}
Therefore, by combining Eq.~(\ref{weak_form_s2}) and Eq.~(\ref{weak_form_s3}) we get

\begin{equation}
\begin{aligned}
    &-2\int_{\Omega}v(\mathbf{x})\int_{\Omega\cup\mathscr{B}\Omega}\gamma(\mathbf{x},\mathbf{y})(u(\mathbf{y})-u(\mathbf{x}))d\mathbf{y}d\mathbf{x}\\
    =&\int_{\Omega\cup\mathscr{B}\Omega}\int_{\Omega\cup\mathscr{B}\Omega}\left[v(\mathbf{y})-v(\mathbf{x})\right]\gamma(\mathbf{x},\mathbf{y})\left[u(\mathbf{y})-u(\mathbf{x})\right]d\mathbf{y}d\mathbf{x}.
\end{aligned}
    \label{weak_form_s4}
\end{equation}
By substituting the latter in Eq.~(\ref{weak_form_s1}), we obtain

\begin{equation}
\begin{aligned}
    &\int_{\Omega\cup\mathscr{B}\Omega}\int_{\Omega\cup\mathscr{B}\Omega}\left[v(\mathbf{y})-v(\mathbf{x})\right]\gamma(\mathbf{x},\mathbf{y})\left[u(\mathbf{y})-u(\mathbf{x})\right]d\mathbf{y}d\mathbf{x}
    &=\int_{\Omega}v(\mathbf{x})b(\mathbf{x})d\mathbf{x}.
\end{aligned}
    \label{weak_form_s5}
\end{equation}
By defining the bilinear form $D(\cdot,\cdot)$ and the linear functional $G(\cdot)$ as

\begin{equation}
\begin{aligned}
    D(u,v)\coloneqq\int_{\Omega\cup\mathscr{B}\Omega}\int_{\Omega\cup\mathscr{B}\Omega}\left[v(\mathbf{y})-v(\mathbf{x})\right]\gamma(\mathbf{x},\mathbf{y})\left[u(\mathbf{y})-u(\mathbf{x})\right]d\mathbf{y}d\mathbf{x},
\end{aligned}
    \label{weak_form_s6}
\end{equation}
and

\begin{equation}
\begin{aligned}
    G(v)\coloneqq\int_{\Omega}v(\mathbf{x})b(\mathbf{x})d\mathbf{x},
\end{aligned}
    \label{weak_form_s7}
\end{equation}
we can rewrite Eq.~(\ref{weak_form_s5}) as

\begin{equation}
\begin{aligned}
    D(u,v)=G(v).
\end{aligned}
    \label{weak_form_s8}
\end{equation}

In double integral operators of the form $\int\left(\int\left(...\right)d\mathbf{y}\right)d\mathbf{x}$, we refer to $\int\left(...\right)d\mathbf{y}$ as the \textit{inner integral}, and to $\int\left(...\right)d\mathbf{x}$ as the \textit{outer integral}.

We define the following function spaces for functions $w(\mathbf{x})$ defined for $\mathbf{x}\in\Omega\cup\mathscr{B}\Omega$:

\begin{equation}
\begin{aligned}
    \mathcal{V}(\Omega\cup\mathscr{B}\Omega)\coloneqq\left\{w\in L^2(\Omega\cup\mathscr{B}\Omega):\vertiii{w}<\infty\right\},
\end{aligned}
    \label{func_spaces1}
\end{equation}
where we define the norm

\begin{equation}
\begin{aligned}
    \vertiii{w}^2& =\int_{\Omega\cup\mathscr{B}\Omega}\int_{\Omega\cup\mathscr{B}\Omega}\lvert w(\mathbf{y})-w(\mathbf{x})\rvert^2\gamma(\mathbf{x},\mathbf{y})d\mathbf{y}d\mathbf{x}+\lVert w \rVert_{L^2(\Omega\cup\mathscr{B}\Omega)}^2\\[2mm]
    & = D(w,w) + \lVert w \rVert_{L^2(\Omega\cup\mathscr{B}\Omega)}^2.
\end{aligned}
    \label{NL_energy_norm1}
\end{equation}
We also introduce the constrained energy space

\begin{equation}
\begin{aligned}
    \mathcal{V}_0(\Omega\cup\mathscr{B}\Omega)\coloneqq\left\{w\in \mathcal{V}(\Omega\cup\mathscr{B}\Omega):\left.w\right\vert_{\mathscr{B}\Omega}=0\right\},
\end{aligned}
    \label{func_spaces2}
\end{equation}
for which 

\begin{equation}\label{eq:V0norm}
\vertiii{w}_0^2 = D(w,w),
\end{equation}
defines a norm. Finally, we define the nonlocal trace space as $\mathcal{V}_t(\Omega\cup\mathscr{B}\Omega)=\left\{\left.w\right\vert_{\mathscr{B}\Omega}:w\in\mathcal{V}(\Omega\cup\mathscr{B}\Omega)\right\}$. Let $\mathcal{V}':\mathcal{V}_0\rightarrow\mathbb{R}$ denote the dual space of bounded linear functionals on $\mathcal{V}_0$ via $L^2$ duality pairing, i.e. the space of functionals $\varphi:\mathcal{V}_0\times\mathcal{W}\rightarrow\mathbb{R}$ of the type  

\begin{equation}
\begin{aligned}
    \varphi(\cdot,\cdot)=\int_{\Omega}(\cdot)(\cdot)d\mathbf{x},
\end{aligned}
    \label{func_spaces_att1}
\end{equation}
where $\mathcal{W}:\Omega\rightarrow\mathbb{R}$. Thus, $\forall w\in\mathcal{W}(\Omega)$, we can write $\varphi(\cdot,w):\mathcal{V}_0\rightarrow\mathbb{R}\in\mathcal{V}'$ as

\begin{equation}
\begin{aligned}
    \varphi(\cdot,w)=\int_{\Omega}(\cdot)w(\mathbf{x})d\mathbf{x}.
\end{aligned}
    \label{func_spaces_att2}
\end{equation}
By comparing Eqs.~(\ref{weak_form_s7}) and (\ref{func_spaces_att2}) we see that $G(\cdot)=\varphi(\cdot,w)$, $\forall w\in\mathcal{W}(\Omega)$.
Then, the weak form of (\ref{strong form_NLD.}) is defined as follows: given $g(\mathbf{x})\in\mathcal{V}_t(\Omega\cup\mathscr{B}\Omega)$, and $b(\mathbf{x})\in\mathcal{W}(\Omega)$, find $u(\mathbf{x})\in\mathcal{V}(\Omega\cup\mathscr{B}\Omega)$ such that $\forall v(\mathbf{x})\in\mathcal{V}_0(\Omega\cup\mathscr{B}\Omega)$

\begin{equation}
\begin{aligned}
    D(u,v)=G(v),
\end{aligned}
    \label{weak_form_s9}
\end{equation}
subject to $u(\mathbf{x})=g(\mathbf{x})$ for $\mathbf{x}\in\mathscr{B}\Omega$. Discussions on the well-posedness of (\ref{weak_form_s9}) can be found in \cite{Du2012_NonlocalVolumes,DElia2020_Acta}.

%%%%%%%%%%%%%%%%%%%%%%%%%%%%%%%%%%%%%%%%%%%%%
%%%%%%%%%%%%%%%%%%%%%%%%%%%%%%%%%%%%%%%%%%%%%
\section{Quadrature weights using generalized moving least squares}\label{sec:GMLS_construction}

In this section we review the quadrature approach based on the generalized moving least squares (GMLS) \cite{Mirzaei2012,Salehi2013,Mirzaei2013} method, proposed in \cite{trask2019asymptotically}. For given positions of quadrature points, this method determines their associated quadrature weights by solving an equality constrained optimization problem. In \cite{trask2019asymptotically}, the GMLS-based quadrature was employed within the framework of collocation-based meshfree discretizations of strong-form nonlocal problems.

Consider a collection of points $\mathbf{X}_p=\{\mathbf{x}_j\}_{j=1,...,N_p}\subset\mathscr{H}(\mathbf{x},\delta)$, with $N_p\in\mathbb{N}$, and a quadrature rule for functions $f(\mathbf{x},\mathbf{y})\in\mathbf{V}$, given by
\begin{equation}\label{GMLSpurposeFunc1}
    \int_{\mathscr{H}(\mathbf{x},\delta)}f(\mathbf{x},\mathbf{y})d\mathbf{y}\approx\sum_{\substack{j=1\\j:\mathbf{x}_j\neq\mathbf{x}}}^{N_p}f_{j}\omega_{j},
\end{equation}
where $\mathbf{V}$ denotes a Banach space, $f_{j}=f(\mathbf{x},\mathbf{x}_j)$, and $\{{\omega}_j\}_{j=1,...,N_p}\in\mathbb{R}^{N_p}$ is a collection of quadrature weights to be determined. Notice that in Eq.~(\ref{GMLSpurposeFunc1}) we are excluding $\mathbf{x}_j=\mathbf{x}$ to account for the possibility of $f(\mathbf{x},\mathbf{y})$ having a singularity when $\mathbf{x}=\mathbf{x}_j=\mathbf{y}$. If the function does not exhibit such singularity, then $\mathbf{x}_j=\mathbf{x}$ could also be included in the summation. In order to find the quadrature weights we define the following equality constrained optimization problem: find

\begin{equation}
    \argmin_{\{\omega_j\}\in\mathbb{R}^{N_p}}\sum_{\substack{j=1\\j:\mathbf{x}_j\neq\mathbf{x}}}^{N_p}\omega_j^2
    \label{GMLS_opt1}
\end{equation}
\begin{equation*}
    \text{subject to }\sum_{\substack{j=1\\j:\mathbf{x}_j\neq\mathbf{x}}}^{N_p}f_{j}\omega_{j}=\int_{\mathscr{H}(\mathbf{x},\delta)}f(\mathbf{x},\mathbf{y})d\mathbf{y}\;\;\forall f\in\mathbf{V}_h\subset\mathbf{V},
    \label{GMLS_opt2}
\end{equation*}
where $\mathbf{V}_h$ is a finite dimensional subspace of $\bf V$, consisting of functions to be integrated exactly. The problem in (\ref{GMLS_opt1}) leads to the following saddle-point problem:

\begin{equation}
\setlength{\arraycolsep}{2.2pt}
\renewcommand\arraystretch{0.6}
 \begin{bmatrix} 
    \mathbf{I} & \mathbf{B}^T \\
    \mathbf{B} & \mathbf{0}
    \end{bmatrix}
    \begin{bmatrix}
    \boldsymbol{\omega} \\ \boldsymbol{\lambda}
    \end{bmatrix}
    =
    \begin{bmatrix}
    \mathbf{0}\\\mathbf{g}
    \end{bmatrix},
    \label{saddle-point}
\end{equation}
where $\mathbf{I}\in\mathbb{R}^{N_p\times N_p}$ is the identity matrix; $\boldsymbol{\omega}=\{{\omega}_j\}_{j=1,...,N_p}\in\mathbb{R}^{N_p}$ is the vector containing the set of quadrature weights; and $\boldsymbol{\lambda}\in\mathbb{R}^{ \operatorname{dim}(\mathbf{V}_h)}$ is the vector of Lagrange multipliers enforcing the constraint. $\mathbf{B}\in\mathbb{R}^{N_p\times \operatorname{dim}(\mathbf{V}_h)}$ is defined by $B_{aj}=f^{\alpha}(\mathbf{x},\mathbf{x}_j),\,\forall f^{\alpha}\in\mathbf{V}_h$, where $\{f^{\alpha}\}_{\alpha=1,...,\operatorname{dim}(\mathbf{V}_h)}$ is a basis of $\mathbf{V}_h$. The vector $\mathbf{g}\in\mathbb{R}^{\operatorname{dim}(\mathbf{V}_h)}$ contains the exact integrals of each function in $\{f^{\alpha}\}_{\alpha=1,...,\operatorname{dim}(\mathbf{V}_h)}$, i.e., $g_{\alpha}=\int_{\mathscr{H}(\mathbf{x},\delta)}f^\alpha(\mathbf{x},\mathbf{y})d\mathbf{y}$. Based on Eq.~(\ref{saddle-point}), the quadrature weights can be obtained as

\begin{equation}
    \boldsymbol{\omega}=\mathbf{B}^\mathrm{T}\mathbf{S}^{-1}\mathbf{g},
    \label{gmls_final_equation}
\end{equation}
where $\mathbf{S}=\mathbf{BB}^\mathrm{T}$. It has to be noted that the set of integration weights for a given constraint might not be unique \cite{Pasetto2019,Leng2021_AsymptoticallyCR} and that redundant (linearly dependent) conditions might be present in the constraints. This results in the singularity of the matrix $\mathbf{S}$. In this work, as in \cite{trask2019asymptotically}, a pseudoinverse is used to compute $\mathbf{S}^{-1}$, whenever $\mathbf{S}$ is singular. It should also be noted that, as discussed in \cite{Pasetto2019,Leng2021_AsymptoticallyCR}, this set of integration weights can be constructed equivalently by using the reproducing kernel particle method (RKPM) \cite{MP:progress20years}, due to the equivalence of RKPM and GMLS. 

%%%%%%%%%%%%%%%%%%%%%%%%%%%%%%%%%%%%%%%%%%%%%
%%%%%%%%%%%%%%%%%%%%%%%%%%%%%%%%%%%%%%%%%%%%%
\section{Discrete variational form}\label{sec:discrete-form}

In this section we introduce the discrete form of the variational problem in Eq.~(\ref{weak_form_s9}); specifically, for piecewise linear, finite element discretizations, we describe the computational domain, the discrete representation of the unknown field $u(\mathbf{x})$ and the trial function $v(\mathbf{x})$, and the quadrature rules utilized for the numerical evaluation of the integrals. 

\subsection{Finite element discretization of the weak formulation}\label{FEM_discretization}

Let $\mathcal{M}^h_\Omega\coloneqq\{\Omega^h_e\}_{e=1,\ldots,n_{el,\Omega}}$, $n_{el,\Omega}\in\mathbb{N}$, be a collection of non-overlapping elements, which are open, simply connected subsets of $\mathbb{R}^d$, and let $\partial\Omega^h_e$ be their corresponding boundary. Therefore, $\Omega^h_i\cap\Omega^h_j=\varnothing$ and { $\overline{\Omega^h_i}\cap\overline{\Omega^h_j}=\partial\Omega^h_i\cap\partial\Omega^h_j$} with $i\neq j$, $i,j=1,\ldots,n_{el}${, where the bar denotes the closure of the set}.
We assume that the domain $\Omega$, introduced in Section~\ref{sec:Nonlocal_diff_model}, is a polyhedral so that it can be exactly covered by the mesh $\mathcal M_\Omega^h$, i.e. $\Omega =\cup_{e=1}^{n_{el,\Omega}}\Omega^h_e$. Note that when $\Omega$ is not polyhedral, one can introduce a polyhedral approximation $\Omega^h\approx\Omega$ for which a covering exists. When the nonlocal interaction region is a Euclidean ball, the interaction domain $\mathcal B\Omega$ is generally not a polyhedral domain since vertices of $\Omega$ create rounded corners in $\mathscr{B}\Omega$ (see, for example, Figure \ref{Interaction_domain}). Therefore, following \cite{delia2020cookbook}, we approximate $\mathscr{B}\Omega$ by a polyhedral domain by replacing rounded corners by vertices (see Figure \ref{Interaction_domain_poly} for illustration). From now on, we will refer to this approximate polyhedral domain also as $\mathscr{B}\Omega$. Note that there is no need to extend the boundary data $g(\mathbf{x})$ to added regions between the original curved corners and the new corners of the polyhedral approximation since these portions of the domain are never accessed during the numerical solution process.

\begin{figure}[H]
\begin{center}
\scalebox{0.75}{\includegraphics[trim = 0mm 225mm 0mm 145mm, clip=true,width=1\textwidth]{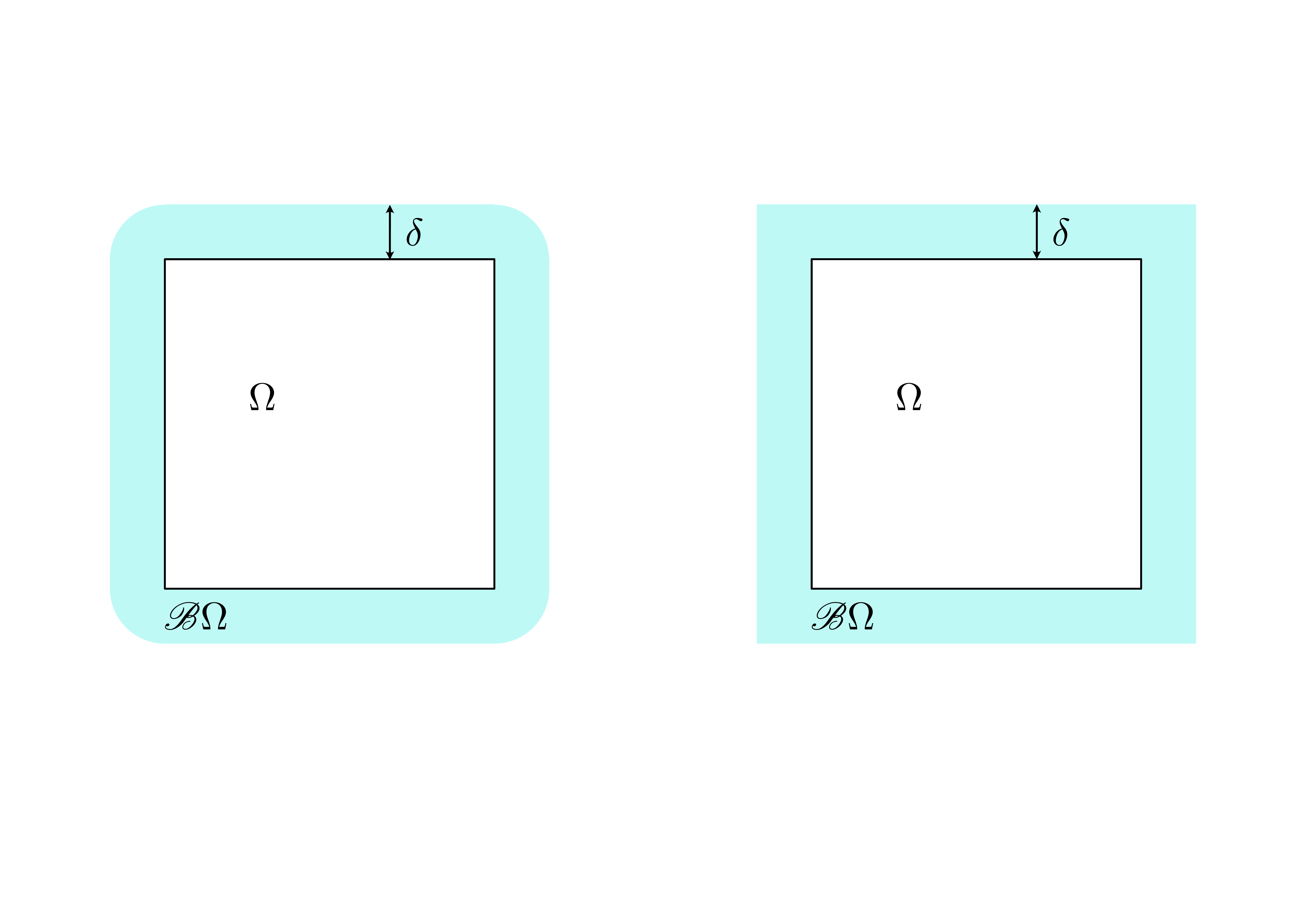}} 
\caption{Left: a square domain $\Omega$ (in white) with its corresponding interaction domain $\mathscr{B}\Omega$ (in light-blue). Right: the same square domain and a polygonal approximate interaction domain, still referred to as $\mathscr{B}\Omega$.}
\label{Interaction_domain_poly}
\end{center}
\end{figure}

Since we consider a polyhedral $\mathscr{B}\Omega$, we can construct another exact mesh $\mathcal{M}^h_{\mathscr{B}\Omega}\coloneqq\{\Omega^h_e\}_{e=n_{el,\Omega}+1,\ldots,n_{el}}$ containing $n_{el}-n_{el,\Omega}$ elements, i.e., $\mathscr{B}\Omega^h\coloneqq\cup_{e=n_{el,\Omega}+1}^{n_{el}-n_{el,\Omega}}\Omega^h_e=\mathscr{B}\Omega$, with $n_{el}\in\mathbb{N}$. Meshing $\Omega$ and $\mathscr{B}\Omega$ separately guarantees that elements do not straddle the shared boundary between $\Omega$ and $\mathscr{B}\Omega$, i.e., $\partial\Omega=\overline{\Omega}\cap\mathscr{B}\Omega$. Moreover, we require that the vertices of the elements of $\mathcal{M}^h_{\mathscr{B}\Omega}$ and $\mathcal{M}^h_{\Omega}$ coincide along the boundary $\partial\Omega$, so that $\mathcal{M}^h=\mathcal{M}^h_{\Omega}\cup\mathcal{M}^h_{\mathscr{B}\Omega}=\{\Omega^h_e\}_{e=1}^{n_{el}}$ is a regular mesh for $\Omega\cup\mathscr{B}\Omega$.\newline

\smallskip
We consider continuous finite element spaces with Lagrange-type compactly supported linear polynomial bases defined with respect to the nodes of $\mathcal{M}^h$.
With $J\in\mathbb{N}$ and $J_\Omega\in\mathbb{N}$, let $\{\tilde{\mathbf{x}}_j\}_{j=1}^{J}$ be the set of all the nodes in $\mathcal{M}^h$, with $\{\tilde{\mathbf{x}}_j\}_{j=1}^{J_\Omega}$ and $\{\tilde{\mathbf{x}}_j\}_{j=J_\Omega+1}^{J}$ being the subset of nodes located in the open domain $\Omega$ and in the closed domain $\mathscr{B}\Omega$, respectively. Notice that in this way, the nodes located on $\partial\Omega=\overline{\Omega}\cap\mathscr{B}\Omega$ are assigned to $\mathscr{B}\Omega$. Then, for $j=1,\ldots,J$, let $\psi_j(\mathbf{x})$ denote a piecewise linear polynomial function such that $\psi_j(\tilde{\mathbf{x}}_k)=\delta_{jk}$ for $k=1,\ldots,J$, where $\delta_{jk}$ is the Kronecker delta function. Then, we define the finite element spaces as

\begin{equation}
    \mathcal{V}^h=\operatorname{span}\{\psi_j\}_{j=1}^{J}\subset\mathcal{V}(\Omega\cup\mathscr{B}\Omega),
\end{equation}
and

\begin{equation}\label{Vh0}
    \mathcal{V}^h_0=\operatorname{span}\{\psi_j\}_{j=1}^{J_\Omega}\subset\mathcal{V}_0(\Omega\cup\mathscr{B}\Omega),
\end{equation}
of dimensions $J$ and $J_\Omega$, respectively. Note that all functions belonging to $\mathcal{V}^h$ and $\mathcal{V}^h_0$ are continuous by construction.

The finite element approximation $u^h(\mathbf{x})\in\mathcal{V}^h$ of the solution $u(\mathbf{x})$ of the nonlocal problem is defined as

\begin{equation}
    u^h(\mathbf{x})=\sum_{j=1}^{J}\psi_j(\mathbf{x})u_j=\sum_{j=1}^{J_\Omega}\psi_j(\mathbf{x})u_j+\sum_{j=J_\Omega+1}^{J}\psi_j(\mathbf{x})g(\tilde{\mathbf{x}}_j)=w^h+g^h,
    \label{uh_fem}
\end{equation}
for a set of coefficients $\{u_j\}_{j=1}^{J}$. Here, the volume constraint in (\ref{strong form_NLD.}) has been applied to a subset associated with the nodes in $\mathscr{B}\Omega$

\begin{equation}
 u_j=g(\tilde{\mathbf{x}}_j)\;\;\;\text{for}\;j=J_\Omega+1,\ldots,J,
\end{equation}
so that

\begin{equation}
 w^h\coloneqq\sum_{j=1}^{J_\Omega}\psi_j(\mathbf{x})u_j\;\;\;\text{and}\;\;\;g^h\coloneqq\sum_{j=J_\Omega+1}^{J}\psi_j(\mathbf{x})g(\tilde{\mathbf{x}}_j).
 \label{uh_fem2}
\end{equation}
The finite element approximation $u^h$ associated with the nonlocal problem in (\ref{weak_form_s9}) is then found by solving the following discrete weak formulation: given $g(\mathbf{x})\in\mathcal{V}_t(\Omega\cup\mathscr{B}\Omega)$, and {$b(\mathbf{x})\in\mathcal{W}(\Omega)$} (see Section~\ref{weak_form}), find $u^h\in\mathcal{V}^h$ such that $\forall v^h\in\mathcal{V}^h_0$ 

\begin{equation}
\begin{aligned}
    D(u^h,v^h)=G(v^h).
\end{aligned}
    \label{weak_form_discrete1}
\end{equation}
By substituting Eq.~(\ref{uh_fem}) in Eq.~(\ref{weak_form_discrete1}) and by choosing $v^h(\mathbf{x})$ from the set of basis functions $\{\psi_i\}_{i=1}^{J_\Omega}$ we get

\begin{equation}
\begin{aligned}
    D(w^h,v^h)=G(v^h)-D(g^h,v^h),
\end{aligned}
    \label{weak_form_discrete2}
\end{equation}
which results in the linear system

\begin{equation}
\begin{aligned}
    \sum_{j=1}^{J_\Omega}D(\psi_j,\psi_i)u_j=G(\psi_i)-D(g^h,\psi_i),
\end{aligned}
    \label{weak_form_discrete3}
\end{equation}
for $i=1,\ldots,J_\Omega$.
Eq.~(\ref{weak_form_discrete3}) can be expressed in matrix form as

\begin{equation}
\begin{aligned}
    \mathbf{A}\mathbf{u}=\mathbf{f},
\end{aligned}
    \label{weak_form_discrete4}
\end{equation}
where $\mathbf{A}$ is a $J_\Omega\times J_\Omega$ matrix with components 

\begin{equation}
\begin{aligned}
    A_{ij}&=D(\psi_j,\psi_i)\\
    &=\int_{\Omega\cup\mathscr{B}\Omega}\int_{\Omega\cup\mathscr{B}\Omega}\left[\psi_i(\mathbf{y})-\psi_i(\mathbf{x})\right]\gamma(\mathbf{x},\mathbf{y})\left[\psi_j(\mathbf{y})-\psi_j(\mathbf{x})\right]d\mathbf{y}d\mathbf{x},
\end{aligned}
    \label{weak_form_discrete5}
\end{equation}
$\mathbf{f}$ is a $J_\Omega\times 1$ vector with components

\begin{equation}
\begin{aligned}
    f_{i}&=G(\psi_i)-D(g^h,\psi_i)=\int_{\Omega}\psi_i(\mathbf{x})b(\mathbf{x})d\mathbf{x}\\
    &\phantom{=}-\int_{\Omega\cup\mathscr{B}\Omega}\int_{\Omega\cup\mathscr{B}\Omega}\left[\psi_i(\mathbf{y})-\psi_i(\mathbf{x})\right]\gamma(\mathbf{x},\mathbf{y})\left[g^h(\mathbf{y})-g^h(\mathbf{x})\right]d\mathbf{y}d\mathbf{x},
\end{aligned}
    \label{weak_form_discrete6}
\end{equation}
and $\mathbf{u}$ is a vector of size $J_\Omega\times 1$ containing the set of unknown coefficients $\{u_j\}_{j=1}^{J_\Omega}$ to be determined. 

%%%%%%%%%%%%%%%%%%%%%%%%%%%%%%%%%%%%%%%%%%%%%
\subsection{Discrete quadrature}\label{quadrature_discretization}

We introduce the numerical quadrature used to solve Eq.~(\ref{weak_form_discrete2}). As described in Section \ref{FEM_discretization} we discretize $\Omega\cup\mathscr{B}\Omega$ using the mesh $\mathcal{M}^h$, and $\Omega$ with $M^h_\Omega\subset\mathcal{M}^h$. Therefore, we can express the left-hand side (LHS) and the right-hand side (RHS) of Eq.~(\ref{weak_form_discrete2}) as

\begin{equation}
\begin{aligned}
    &D(w^h,v^h)\\=&\int_{\Omega\cup\mathscr{B}\Omega}\int_{\Omega\cup\mathscr{B}\Omega}\left[v^h(\mathbf{y})-v^h(\mathbf{x})\right]\gamma(\mathbf{x},\mathbf{y})\left[w^h(\mathbf{y})-w^h(\mathbf{x})\right]d\mathbf{y}d\mathbf{x}\\
    =&\sum_{\Omega_e^h\in\mathcal{M}^h}\int_{\Omega_e^h}\int_{\Omega\cup\mathscr{B}\Omega}\left[v^h(\mathbf{y})-v^h(\mathbf{x})\right]\gamma(\mathbf{x},\mathbf{y})\left[w^h(\mathbf{y})-w^h(\mathbf{x})\right]d\mathbf{y}d\mathbf{x}\\
    =&\sum_{\Omega_e^h\in\mathcal{M}^h}\int_{\Omega_e^h}\int_{\left(\Omega\cup\mathscr{B}\Omega\right)\cap\mathscr{H}(\mathbf{x},\delta)}\left[v^h(\mathbf{y})-v^h(\mathbf{x})\right]\gamma(\mathbf{x},\mathbf{y})\left[w^h(\mathbf{y})-w^h(\mathbf{x})\right]d\mathbf{y}d\mathbf{x},
\end{aligned}
    \label{weak_form_discretequad1}
\end{equation}
and

\begin{equation}
\begin{aligned}
    &G(v^h)-D(g^h,v^h)\\=&\int_{\Omega}v^h(\mathbf{x})b(\mathbf{x})d\mathbf{x}\\
    &-\int_{\Omega\cup\mathscr{B}\Omega}\int_{\Omega\cup\mathscr{B}\Omega}\left[v^h(\mathbf{y})-v^h(\mathbf{x})\right]\gamma(\mathbf{x},\mathbf{y})\left[g^h(\mathbf{y})-g^h(\mathbf{x})\right]d\mathbf{y}d\mathbf{x}\\
    =&\sum_{\Omega_e^h\in\mathcal{M}^h_\Omega}\int_{\Omega_e^h}v^h(\mathbf{x})b(\mathbf{x})d\mathbf{x}\\
    &-\sum_{\Omega_e^h\in\mathcal{M}^h}\int_{\Omega_e^h}\int_{\left(\Omega\cup\mathscr{B}\Omega\right)\cap\mathscr{H}(\mathbf{x},\delta)}\left[v^h(\mathbf{y})-v^h(\mathbf{x})\right]\gamma(\mathbf{x},\mathbf{y})\left[g^h(\mathbf{y})-g^h(\mathbf{x})\right]d\mathbf{y}d\mathbf{x},
\end{aligned}
    \label{weak_form_discretequad2}
\end{equation}
where we have restricted the inner integral to $\left(\Omega\cup\mathscr{B}\Omega\right)\cap\mathscr{H}(\mathbf{x},\delta)$ (see Eq.~(\ref{ball})). 

We now describe how to discretize the integrals over the mesh elements $\Omega_e^h$ and over $\left(\Omega\cup\mathscr{B}\Omega\right)\cap\mathscr{H}(\mathbf{x},\delta)$. 
For the outer integral (over the elements) we consider a high-order Gauss quadrature; for further discussion on outer quadrature schemes we refer the reader to \cite{delia2020cookbook}. For $N_q\in\mathbb{N}$, we denote the set of element Gauss quadrature points and weights to be used for the element integrals present in $D(w^h,v^h)$ as $\{\mathbf{x}^e_q\}_{q=1}^{N_q}\in\Omega^h_e$ and $\{\omega^e_q\}_{q=1}^{N_q}$, respectively. We also define, for $N_{b}\in\mathbb{N}$, the set of element Gauss quadrature points and weights to be employed for the integration over the elements in $G(v^h)$ as $\{\mathbf{x}^e_b\}_{b=1}^{N_b}\in\Omega^h_e$ and $\{\omega^e_b\}_{b=1}^{N_b}$, respectively. Therefore, from Eqs.~(\ref{weak_form_discretequad1}) and (\ref{weak_form_discretequad2}), we get

\begin{equation}
\begin{aligned}
    & D(w^h,v^h)\\=&\sum_{\Omega_e^h\in\mathcal{M}^h}\int_{\Omega_e^h}\int_{\left(\Omega\cup\mathscr{B}\Omega\right)\cap\mathscr{H}(\mathbf{x},\delta)}\left[v^h(\mathbf{y})-v^h(\mathbf{x})\right]\gamma(\mathbf{x},\mathbf{y})\\&\left[w^h(\mathbf{y})-w^h(\mathbf{x})\right]d\mathbf{y}d\mathbf{x}\\
    \approx &\sum_{\Omega_e^h\in\mathcal{M}^h}\sum_{q=1}^{N_q}\int_{\left(\Omega\cup\mathscr{B}\Omega\right)\cap\mathscr{H}(\mathbf{x}^e_q,\delta)}\left[v^h(\mathbf{y})-v^h(\mathbf{x}^e_q)\right]\gamma(\mathbf{x}^e_q,\mathbf{y})\\&\left[w^h(\mathbf{y})-w^h(\mathbf{x}^e_q)\right]d\mathbf{y}\omega^e_q,
\end{aligned}
    \label{weak_form_discretequad3}
\end{equation}
and

\begin{equation}
\begin{aligned}
    & G(v^h)-D(g^h,v^h)\\=&\sum_{\Omega_e^h\in\mathcal{M}^h_\Omega}\int_{\Omega_e^h}v^h(\mathbf{x})b(\mathbf{x})d\mathbf{x}\\
    &-\sum_{\Omega_e^h\in\mathcal{M}^h}\int_{\Omega_e^h}\int_{\left(\Omega\cup\mathscr{B}\Omega\right)\cap\mathscr{H}(\mathbf{x},\delta)}\left[v^h(\mathbf{y})-v^h(\mathbf{x})\right]\gamma(\mathbf{x},\mathbf{y})\left[g^h(\mathbf{y})-g^h(\mathbf{x})\right]d\mathbf{y}d\mathbf{x}\\
    \approx&\sum_{\Omega_e^h\in\mathcal{M}^h_\Omega}\sum_{b=1}^{N_b}v^h(\mathbf{x}^e_b)b(\mathbf{x}^e_b)\omega^e_b\\
    &-\sum_{\Omega_e^h\in\mathcal{M}^h}\sum_{q=1}^{N_q}\int_{\left(\Omega\cup\mathscr{B}\Omega\right)\cap\mathscr{H}(\mathbf{x}^e_q,\delta)}\left[v^h(\mathbf{y})-v^h(\mathbf{x}^e_q)\right]\gamma(\mathbf{x}^e_q,\mathbf{y})\left[g^h(\mathbf{y})-g^h(\mathbf{x}^e_q)\right]d\mathbf{y}\omega^e_q.
\end{aligned}
    \label{weak_form_discretequad4}
\end{equation}

To discretize the remaining inner integrals over $\left(\Omega\cup\mathscr{B}\Omega\right)\cap\mathscr{H}(\mathbf{x}^e_q,\delta)$ in Eqs.~(\ref{weak_form_discretequad3}) and (\ref{weak_form_discretequad4}) we use the GMLS quadrature introduced in Section \ref{sec:GMLS_construction}. We start with the case in which  $\left(\Omega\cup\mathscr{B}\Omega\right)\cap\mathscr{H}(\mathbf{x}^e_q,\delta)=\mathscr{H}(\mathbf{x}^e_q,\delta)$, i.e., the integration domain is the full ball of radius $\delta$ around $\mathbf{x}^e_q$. Note that this is the case for all $\mathbf{x}^e_q\in\left(\Omega\cup\partial\Omega\right)$. We then consider the following set of points placed in a regular uniform grid, symmetric around $\mathbf{x}^e_q$:

\begin{equation}
\begin{aligned}\label{quad_points_ball_grid}
\left\{{\mathbf{x}}^e_{qp}\right\}_{p=1}^{\overline{N}_{qp}}\coloneqq\bigg\{&{\mathbf{x}}^e_{qp}\in\mathbb{R}^d;k_1,k_2,\ldots,k_d\in\mathbb{Z}\setminus\left\{0\right\}:
\\&{\mathbf{x}}^e_{qp}=\left({x}^e_{qp1},{x}^e_{qp2},\ldots,{x}^e_{qpd}\right)\\
&=\left(x^e_{q1}+(2k_1-\sgn(k_1))\frac{\overline{h}}{2},x^e_{q2}+(2k_2-\sgn(k_2))\frac{\overline{h}}{2},\ldots,\right.\\& \left. x^e_{qd}+(2k_d-\sgn(k_d))\frac{\overline{h}}{2}\right),\\
&-\overline{N}_{qp,\delta}\leq k_1,k_2,\ldots,k_d\leq \overline{N}_{qp,\delta}
\bigg\},
\end{aligned}
\end{equation}
where $\overline{N}_{qp,\delta}\in\mathbb{N}$, 

\begin{equation}
\overline{h}=\frac{\delta}{\overline{N}_{qp,\delta}}
\end{equation}
is the spacing between grid points, and

\begin{equation}
\overline{N}_{qp}=\left(2\overline{N}_{qp,\delta}\right)^d,
\end{equation}
is the overall number of points. In this work, we take $\overline{N}_{qp,\delta}$ to be a constant independent of $q$, i.e., $\overline{N}_{{q_i}p,\delta}=\overline{N}_{{q_j}p,\delta}$ $\forall q_i,q_j$ such that $\mathbf{x}^e_{q_i},\mathbf{x}^e_{q_j}\in\Omega\cup\mathscr{B}\Omega$. 
The subset of $N_{qp}$ points of $\left\{{\mathbf{x}}^e_{qp}\right\}_{p=1}^{\overline{N}_{qp}}$ contained in $\mathscr{H}(\mathbf{x}^e_q,\delta)\cap\left(\Omega\cup\mathscr{B}\Omega\right)$ is given by  

\begin{equation}\label{Inner_quadrature_set}
\begin{aligned}
\left\{{\mathbf{x}}^e_{qp}\right\}_{p=1}^{{N}_{qp}}&\coloneqq\left\{{\mathbf{x}}^e_{qp}\right\}_{p=1}^{\overline{N}_{qp}}\cap\mathscr{H}(\mathbf{x}^e_q,\delta)\cap\left(\Omega\cup\mathscr{B}\Omega\right)\\
&=\left \{\mathbf{x}^e_{qp}\in \left\{{\mathbf{x}}^e_{qp}\right\}_{p=1}^{\overline{N}_{qp}}\cap\left(\Omega\cup\mathscr{B}\Omega\right):
\lvert \mathbf{x}^e_{qp}-\mathbf{x}^e_q \rvert_{\ell^{\tilde{p}}} \leq \delta
\right \}.    
\end{aligned}
\end{equation}
This is the set of quadrature points used to discretize the integrals over $\left(\Omega\cup\mathscr{B}\Omega\right)\cap\mathscr{H}(\mathbf{x}^e_q,\delta)$ in Eqs.~(\ref{weak_form_discretequad3}) and (\ref{weak_form_discretequad4}). When $\left(\Omega\cup\mathscr{B}\Omega\right)\cap\mathscr{H}(\mathbf{x}^e_q,\delta)=\mathscr{H}(\mathbf{x}^e_q,\delta)$, this set reduces to 

\begin{equation}\label{Inner_quadrature_set2}
\begin{aligned}
\left\{{\mathbf{x}}^e_{qp}\right\}_{p=1}^{{N}_{qp}}=\left \{\mathbf{x}^e_{qp}\in \left\{{\mathbf{x}}^e_{qp}\right\}_{p=1}^{\overline{N}_{qp}}:
\lvert \mathbf{x}^e_{qp}-\mathbf{x}^e_q \rvert_{\ell^{\tilde{p}}} \leq \delta
\right \}.
\end{aligned}
\end{equation}
Figures \ref{1D_ball_quadrature} and \ref{2D_ball_quadrature} show the distribution of quadrature points for one-dimensional and two-dimensional Euclidean balls.

\begin{figure}[H]
\begin{center}
\scalebox{1.0}{\includegraphics[trim = 50mm 430mm 50mm 320mm, clip=true,width=1\textwidth]{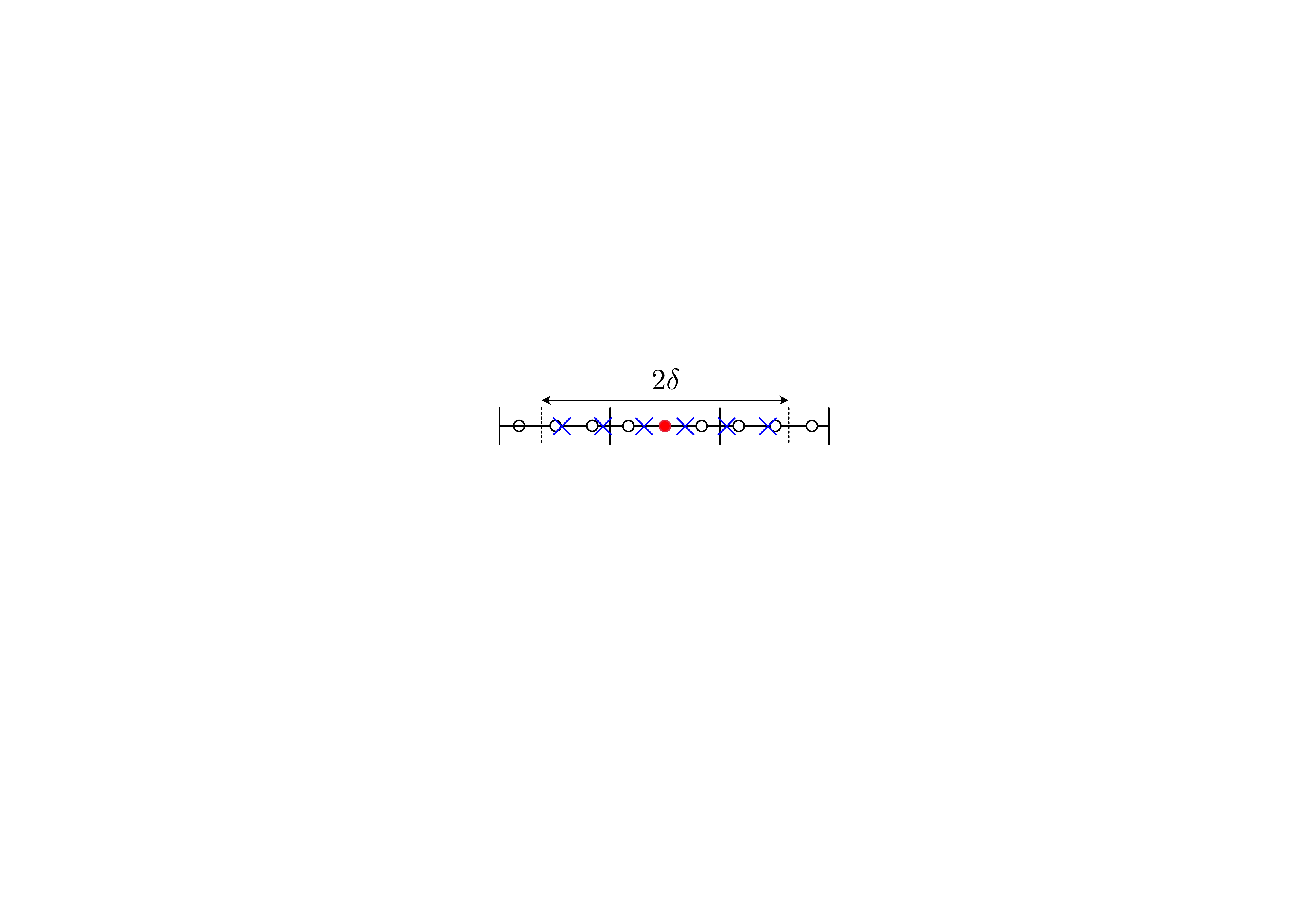}} 
\caption{One-dimensional Euclidean ball quadrature points. The filled red dot represents $x^e_q$ while the blue crosses are the associated quadrature points $x^e_{qp}$.}
\label{1D_ball_quadrature}
\end{center}
\end{figure}

\begin{figure}[H]
\begin{center}
\scalebox{1.0}{\includegraphics[trim = 50mm 300mm 50mm 175mm, clip=true,width=1\textwidth]{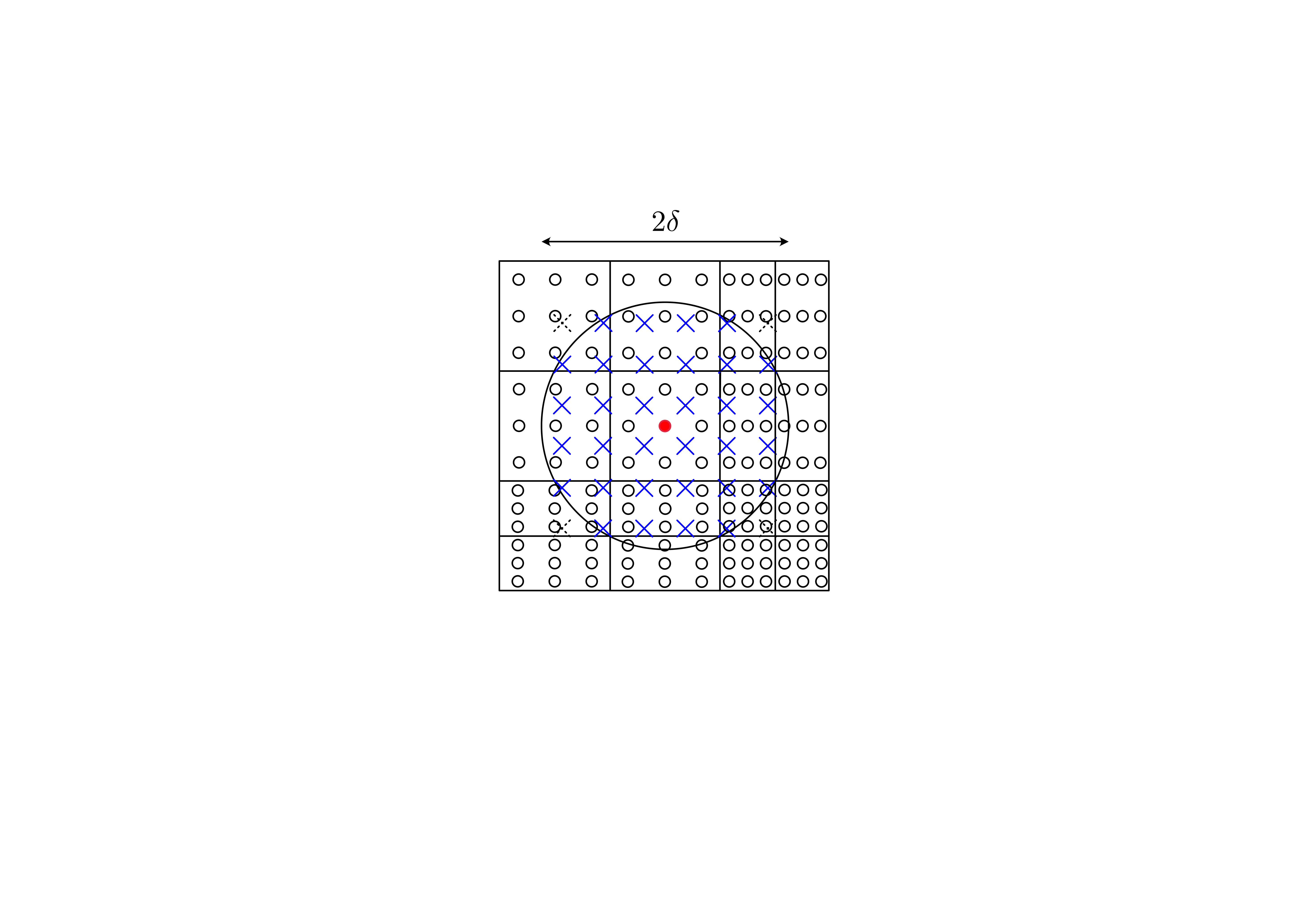}} 
\caption{Two-dimensional Euclidean ball quadrature points. The filled red dot represents $\mathbf{x}^e_q$ while the blue crosses are the associated quadrature points $\mathbf{x}^e_{qp}$.}
\label{2D_ball_quadrature}
\end{center}
\end{figure}

To determine the set of quadrature weights $\left\{\omega^e_{qp}\right\}_{p=1}^{N_{qp}}$ associated with $\left\{{\mathbf{x}}^e_{qp}\right\}_{p=1}^{{N}_{qp}}$, we employ the approach presented in Section \ref{sec:GMLS_construction}, with $\mathbf{x}=\mathbf{x}^e_q$. As our finite dimensional space $\mathbf{V}_h$, i.e., as the space of functions for which we impose exactness of integration, we select

\begin{equation}\label{Vh_exact_constraints}
\begin{aligned}
\mathbf{V}_h=\{&f(\mathbf{x},\mathbf{y}):\Omega\cup\mathscr{B}\Omega\times\Omega\cup\mathscr{B}\Omega\rightarrow\mathbb{R}, \\
&f(\mathbf{x},\mathbf{y})=\gamma(\mathbf{x},\mathbf{y})\left(\mathbf{y}-\mathbf{x}\right)^{\boldsymbol{\beta}} \text{ with }\lvert\boldsymbol{\beta}\rvert=2\},
\end{aligned}
\end{equation}
where we are using multi-index notation. Here, $\boldsymbol{\beta}$ is a collection of $d$ non-negative integers, $\boldsymbol{\beta}=(\beta_1,\ldots,\beta_d)$ with length $\lvert\boldsymbol{\beta}\rvert=\sum_{i=1}^{d}\beta_i$. For a given $\boldsymbol{\beta}$, $(\mathbf{y}-\mathbf{x})^{\boldsymbol{\beta}}=(y_1-x_1)^{\beta_1}\ldots(y_d-x_d)^{\beta_d}$. Eq.~(\ref{Vh_exact_constraints}) can be related to assuming the trial and test functions $v(\mathbf{x})$ and $u(\mathbf{x})$ to be linear functions in Eqs.~(\ref{weak_form_discretequad3}) and (\ref{weak_form_discretequad4}), consistently with our choice to approximate them with linear finite element approximations (see Section \ref{FEM_discretization}). In fact, in a one-dimensional case, Eq.~(\ref{Vh_exact_constraints}) corresponds to imposing exact integration of $\int_{\left(\Omega\cup\mathscr{B}\Omega\right)\cap\mathscr{H}(x,\delta)}(y-x)\gamma(x,y)(y-x)dy$, while in a two-dimensional case, to imposing exact integration of $\int_{\left(\Omega\cup\mathscr{B}\Omega\right)\cap\mathscr{H}(\mathbf{x},\delta)}(y_1-x_1)\gamma(\mathbf{x},\mathbf{y})(y_1-x_1)d\mathbf{y}$, $\int_{\left(\Omega\cup\mathscr{B}\Omega\right)\cap\mathscr{H}(\mathbf{x},\delta)}(y_2-x_2)\gamma(\mathbf{x},\mathbf{y})(y_2-x_2)d\mathbf{y}$, and $\int_{\left(\Omega\cup\mathscr{B}\Omega\right)\cap\mathscr{H}(\mathbf{x},\delta)}(y_1-x_1)\gamma(\mathbf{x},\mathbf{y})(y_2-x_2)d\mathbf{y}$. Furthermore, for kernels $\gamma(\mathbf{x},\mathbf{y})$ of the types expressed in Eqs.~(\ref{kernel_form1}) and (\ref{kernel_form2}), the functions in $\mathbf{V}_h$ only depend on $\mathbf{y}-\mathbf{x}$, meaning that the quadrature weights depend only on the relative position between the quadrature points in $\left\{{\mathbf{x}}^e_{qp}\right\}_{p=1}^{{N}_{qp}}$ and the center of the ball $\mathbf{x}^e_{q}$, i.e., $\mathbf{x}^e_{qp}-\mathbf{x}^e_{q}$, for arbitrary $p,q$. Since the positions of the points $\mathbf{x}^e_{qp}$ are defined relative to $\mathbf{x}^e_{q}$ (see Eq. \ref{quad_points_ball_grid}), their relative positions with respect to the centers of the balls is always the same for all full balls. Therefore, the quadrature weights can be evaluated once for a representative full ball and used for all full balls $\mathscr{H}(\mathbf{x}^e_q,\delta)$, $\forall\mathbf{x}^e_q\in\Omega\cup\partial\Omega$.

Note that in \cite{Leng2021_AsymptoticallyCR} a similar placement of quadrature points within the full ball $\mathscr{H}(\mathbf{x}^e_q,\delta)$, i.e., quadrature points in a regular uniform grid,  symmetrically distributed around $\mathbf{x}^e_q$, was employed for the numerical quadrature of strong-form nonlocal diffusion. Furthermore, conditions for obtaining positive quadrature weights, as well as expressions for them, are also provided in \cite{Leng2021_AsymptoticallyCR}. While in this work we do not explicitly impose any restriction on the positivity of the weights, in all our tests the quadrature weights $\left\{\omega^e_{qp}\right\}_{p=1}^{N_{qp}}$ are verified to be positive.

Next, we consider the case in which $\mathbf{x}^e_q\in\mathscr{B}\Omega\setminus\partial\Omega$. In this case, $\left(\Omega\cup\mathscr{B}\Omega\right)\cap\mathscr{H}(\mathbf{x}^e_q,\delta)\subset\mathscr{H}(\mathbf{x}^e_q,\delta)$, i.e. the integration over $\left(\Omega\cup\mathscr{B}\Omega\right)\cap\mathscr{H}(\mathbf{x}^e_q,\delta)$ is over a \textit{partial} or \textit{truncated} ball (see Figure  \ref{2D_ball_quadrature_boundary} for an example of a two-dimensional partial Euclidean ball). Therefore, the set of points defined by Eq. (\ref{Inner_quadrature_set}) will not be symmetrical with respect to its associated $\mathbf{x}^e_q$. Moreover, its dimension $N_{qp}$ will also be different depending on the position of $\mathbf{x}^e_q$. For this reason, a different set of weights $\left\{\omega^e_{qp}\right\}_{p=1}^{N_{qp}}$ needs to be computed $\forall q$ such that $\mathbf{x}^e_q\in\left(\mathscr{B}\Omega\setminus\partial\Omega\right)$. 
Recall that the expressions in Section \ref{sec:GMLS_construction} were presented for integrals over full balls. For partial balls, the constraints in the optimization problem in Eq.~(\ref{GMLS_opt1}) can be stated as

\begin{equation}
    \sum_{\substack{p=1\\p:\mathbf{x}^e_{qp}\neq\mathbf{x}^e_q}}^{N_{qp}}f_{j}\omega_{j}=\int_{\mathscr{H}(\mathbf{x}^e_q,\delta)\cap\left(\Omega\cup\mathscr{B}\Omega\right)}f(\mathbf{x}^e_q,\mathbf{y})d\mathbf{y}\;\;\forall f\in\mathbf{V}_h.
    \label{GMLS_opt2_partial_balls}
\end{equation}
Due to the complex geometry of $\mathscr{H}(\mathbf{x}^e_q,\delta)\cap\left(\Omega\cup\mathscr{B}\Omega\right)$ , the analytical integral on the right-hand side of Eq.~(\ref{GMLS_opt2_partial_balls}) is particularly cumbersome. Therefore, in this work, we follow \cite{You2020Regression,trask2019asymptotically,Leng2021_AsymptoticallyCR,Gross2020} and approximate the right-hand side of Eq.~(\ref{GMLS_opt2_partial_balls}) with the integral over the full ball, as in Eq. (\ref{GMLS_opt1}). 

%%%
\subsubsection{Special treatment of the nonlocal boundary}
Let us now consider $q_1\in e_1$, and $q_2\in e_2$ such that $\mathbf{x}^{e_1}_{q_1}\in\Omega\cup\partial\Omega$, and $\mathbf{x}^{e_2}_{q_2}\in\mathscr{B}\Omega\setminus\partial\Omega$, and two points $\mathbf{x}^{e_1}_{q_1p_i}\in\left\{\mathbf{x}^{e_1}_{q_1p}\right\}_{p=1}^{N_{{q_1}p}}$, and $\mathbf{x}^{e_2}_{q_2p_j}\in\left\{\mathbf{x}^{e_2}_{q_2p}\right\}_{p=1}^{N_{{q_2}p}}$, such that 

\begin{equation}
    \mathbf{x}^{e_1}_{q_1p_i}-\mathbf{x}^{e_1}_{q_1}=\mathbf{x}^{e_2}_{q_2p_j}-\mathbf{x}^{e_2}_{q_2}.
\end{equation}
By employing the optimization-based procedure described in Section~\ref{sec:GMLS_construction} with the finite-dimensional space in (\ref{Vh_exact_constraints}), we can determine the sets of weights $\{\omega^{e_1}_{q_1p}\}_{p=1}^{N_{q_1p}}$ and $\{\omega^{e_2}_{q_2p}\}_{p=1}^{N_{q_2p}}$ associated with $\left\{\mathbf{x}^{e_1}_{q_1p}\right\}_{p=1}^{N_{{q_1}p}}$ and $\mathbf{x}^{e_2}_{q_2p_j}\in\left\{\mathbf{x}^{e_2}_{q_2p}\right\}_{p=1}^{N_{{q_2}p}}$, respectively. In general, $\omega^{e_1}_{{q_1}p_i}\neq\omega^{e_2}_{{q_2}p_j}$, meaning that two quadrature points with the same relative position with respect to the center $\mathbf{x}^e_{q}$ of the corresponding ball will have different weights. As illustrated numerically in Section \ref{numerical_1D_uniform}, this fact may cause the discretization error to increase near the boundary $\mathscr{B}\Omega$. To circumvent this issue, we consider an extension of the interaction domain of size $t_e$, with $0\leq t_e\leq\delta$, for the computation of the inner quadrature weights. To this end, we define 

\begin{equation}\label{interaction domain_extended}
\mathscr{B}\Omega^{t_e}\coloneqq
\left \{\mathbf{y}\in {\mathbb{R} ^{d}}\setminus\Omega:
\exists \mathbf{x}\in\Omega \; \text{such that} \;\lvert \mathbf{y}-\mathbf{x} \rvert_{\ell^{\tilde{p}}} \leq \left(\delta+t_e\right)
\right \}\setminus\mathscr{B}\Omega.    
\end{equation}
As discussed above, for the interaction domain $\mathscr{B}\Omega$, in the case of Euclidean balls, i.e., $\tilde{p}=2$, $\mathscr{B}\Omega^{t_e}$ will have rounded corners, which we replace with vertices to make $\mathscr{B}\Omega^{t_e}$ a polyhedral domain that can be easily meshed (see Figure \ref{Interaction_domain_poly_te} for a two-dimensional illustration). 

\begin{figure}[H]
\begin{center}
\scalebox{0.75}{\includegraphics[trim = 0mm 200mm 0mm 100mm, clip=true,width=1\textwidth]{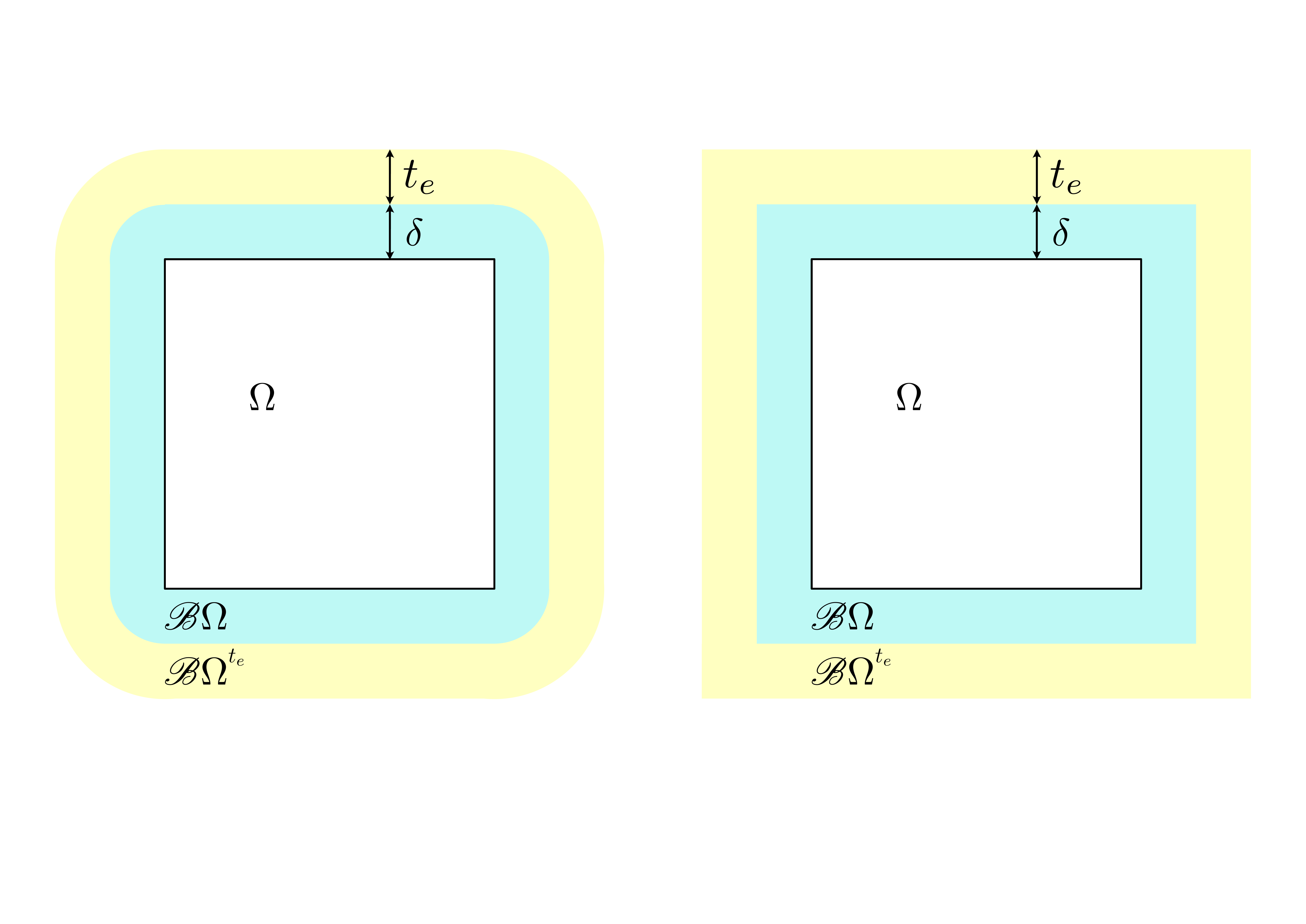}} 
\caption{Left: a square domain $\Omega$ (in white) with its corresponding interaction domain $\mathscr{B}\Omega$ (in light-blue) and its interaction domain extension $\mathscr{B}\Omega^{t_e}$ (in yellow) . Right: the same square domain and polygonal approximate interaction domain and interaction domain extension, still referred to as $\mathscr{B}\Omega$ and $\mathscr{B}\Omega^{t_e}$, respectively.}
\label{Interaction_domain_poly_te}
\end{center}
\end{figure}

Now, $\forall \mathbf{x}^e_q\in\left(\Omega\cup\mathscr{B}\Omega\right)$, we define the following set of points 

\begin{equation}\label{Inner_quadrature_set_extended}
\begin{aligned}
\left\{{\mathbf{x}}^e_{qp}\right\}_{p=1}^{\tilde{N}_{qp}}&\coloneqq\left\{{\mathbf{x}}^e_{qp}\right\}_{p=1}^{\overline{N}_{qp}}\cap\mathscr{H}(\mathbf{x}^e_q,\delta)\cap\left(\Omega\cup\mathscr{B}\Omega\cup\mathscr{B}\Omega^{t_e}\right)\\
&=\left \{\mathbf{x}^e_{qp}\in \left\{{\mathbf{x}}^e_{qp}\right\}_{p=1}^{\overline{N}_{qp}}\cap\left(\Omega\cup\mathscr{B}\Omega\cup\mathscr{B}\Omega^{t_e}\right):
\lvert \mathbf{x}^e_{qp}-\mathbf{x}^e_q \rvert_{\ell^{\tilde{p}}} \leq \delta
\right \},    
\end{aligned}
\end{equation}
which coincides to the one defined in (\ref{Inner_quadrature_set}) for $t_e=0$ and, regardless of $t_e$, $\forall\mathbf{x}^e_q\in(\Omega\cup\partial\Omega).$
For $t_e=\delta$, we have $\left(\Omega\cup\mathscr{B}\Omega\cup\mathscr{B}\Omega^{t_e}\right)\cap\mathscr{H}(\mathbf{x}^e_q,\delta)=\mathscr{H}(\mathbf{x}^e_q,\delta)$, meaning that for every $\mathbf{x}^e_q\in\left(\Omega\cup\mathscr{B}\Omega\right)$ the set of points defined in (\ref{Inner_quadrature_set_extended}) is distributed across each full ball $\mathscr{H}(\mathbf{x}^e_q,\delta)$, as illustrated in Figures \ref{1D_ball_quadrature_boundary} and \ref{2D_ball_quadrature_boundary} for a one-dimensional and a two-dimensional case, respectively.
\begin{figure}[H]
\begin{center}
\scalebox{0.5}{\includegraphics[trim = 0mm 0mm 0mm 0mm, clip=true,width=1\textwidth]{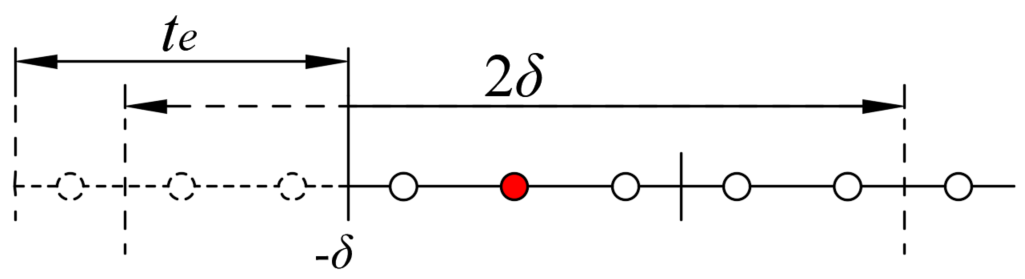}} 
\caption{One-dimensional partial integration ball for the filled red point and mesh extension of size $t_e$. The shown region is the left region of a domain $\mathscr{B}\Omega\cup\Omega=\left[-\delta,1+\delta\right]$, with $\Omega=\left(0,1\right)$.}
\label{1D_ball_quadrature_boundary}
\end{center}
\end{figure}

\begin{figure}[H]
\begin{center}
\scalebox{0.5}{\includegraphics[trim = 23mm 0mm 23mm 0mm, clip=true,width=1\textwidth]{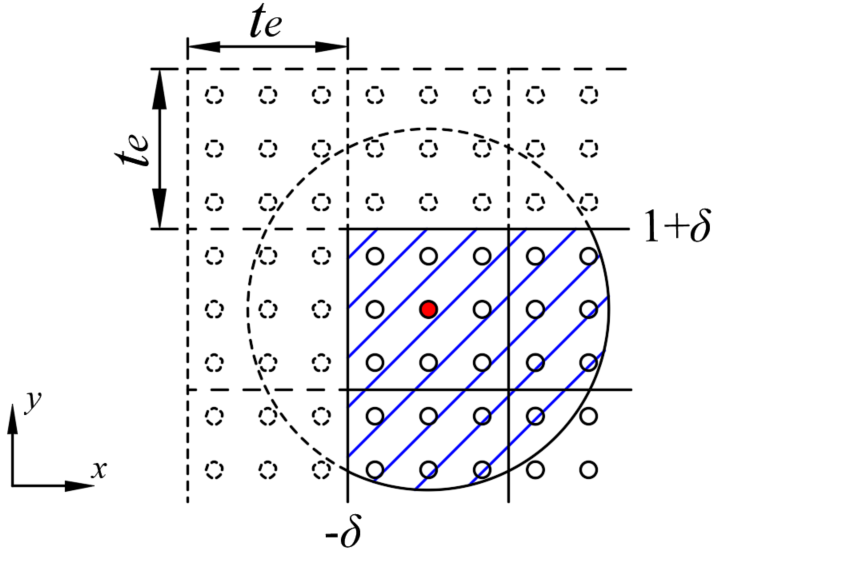}} 
\caption{Two-dimensional partial integration Euclidean ball for the filled red point (shaded area) and mesh extension of size $t_e$. The shown region is the top-left region of a domain $\mathscr{B}\Omega\cup\Omega=\left[-\delta,1+\delta\right]\times\left[-\delta,1+\delta\right]$, with $\Omega=\left(0,1\right)\times\left(0,1\right)$.}
\label{2D_ball_quadrature_boundary}
\end{center}
\end{figure}

We can now employ the optimization procedure from Section \ref{sec:GMLS_construction}, for the finite dimensional space defined in (\ref{Vh_exact_constraints}), to determine the set of weights $\{\omega^e_{qp}\}_{p=1}^{\tilde{N}_{qp}}$.
We can then select $\left\{{\mathbf{x}}^e_{qp}\right\}_{p=1}^{{N}_{qp}}\subseteq\left\{{\mathbf{x}}^e_{qp}\right\}_{p=1}^{\tilde{N}_{qp}}$ as

\begin{equation}\label{Inner_quadrature_set_extended_reduced}
\begin{aligned}
\left\{{\mathbf{x}}^e_{qp}\right\}_{p=1}^{{N}_{qp}}&\coloneqq\left \{\mathbf{x}^e_{qp}\in \left\{{\mathbf{x}}^e_{qp}\right\}_{p=1}^{\tilde{N}_{qp}}\cap\left(\Omega\cup\mathscr{B}\Omega\right)
\right \},    
\end{aligned}
\end{equation}
and their associated weights $\left\{\omega^e_{qp}\right\}_{p=1}^{{N}_{qp}}\subseteq\left\{{\omega}^e_{qp}\right\}_{p=1}^{\tilde{N}_{qp}}$. Therefore, from Eqs. (\ref{weak_form_discretequad3}) and (\ref{weak_form_discretequad4}), we can obtain

\begin{equation}
\begin{aligned}
    & D(w^h,v^h)\\
    \approx&\sum_{\Omega_e^h\in\mathcal{M}^h}\sum_{q=1}^{N_q}\int_{\left(\Omega\cup\mathscr{B}\Omega\right)\cap\mathscr{H}(\mathbf{x}^e_q,\delta)}\left[v^h(\mathbf{y})-v^h(\mathbf{x}^e_q)\right]\gamma(\mathbf{x}^e_q,\mathbf{y})\left[w^h(\mathbf{y})-w^h(\mathbf{x}^e_q)\right]d\mathbf{y}\omega^e_q\\
    \approx&\sum_{\Omega_e^h\in\mathcal{M}^h}\sum_{q=1}^{N_q}\sum_{p=1}^{N_{qp}}\left[v^h(\mathbf{x}^e_{qp})-v^h(\mathbf{x}^e_q)\right]\gamma(\mathbf{x}^e_q,\mathbf{x}^e_{qp})\left[w^h(\mathbf{x}^e_{qp})-w^h(\mathbf{x}^e_q)\right]\omega^e_{qp}\omega^e_q\\
    =& D^h(w^h,v^h),
\end{aligned}
    \label{weak_form_discretequad_full1}
\end{equation}
and

\begin{equation}
\begin{aligned}
    &G(v^h)-D(g^h,v^h)\\
    \approx&\sum_{\Omega_e^h\in\mathcal{M}^h_\Omega}\sum_{b=1}^{N_b}v^h(\mathbf{x}^e_b)b(\mathbf{x}^e_b)\omega^e_b\\
    &-\sum_{\Omega_e^h\in\mathcal{M}^h}\sum_{q=1}^{N_q}\int_{\left(\Omega\cup\mathscr{B}\Omega\right)\cap\mathscr{H}(\mathbf{x}^e_q,\delta)}\left[v^h(\mathbf{y})-v^h(\mathbf{x}^e_q)\right]\gamma(\mathbf{x}^e_q,\mathbf{y})\left[g^h(\mathbf{y})-g^h(\mathbf{x}^e_q)\right]d\mathbf{y}\omega^e_q\\
    \approx&\sum_{\Omega_e^h\in\mathcal{M}^h_\Omega}\sum_{b=1}^{N_b}v^h(\mathbf{x}^e_b)b(\mathbf{x}^e_b)\omega^e_b\\
    &-\sum_{\Omega_e^h\in\mathcal{M}^h}\sum_{q=1}^{N_q}\sum_{p=1}^{N_{qp}}\left[v^h(\mathbf{x}^e_{qp})-v^h(\mathbf{x}^e_q)\right]\gamma(\mathbf{x}^e_q,\mathbf{x}^e_{qp})\left[g^h(\mathbf{x}^e_{qp})-g^h(\mathbf{x}^e_q)\right]\omega^e_{qp}\omega^e_q\\
    =&G^h(v^h)-D^h(g^h,v^h),
\end{aligned}
    \label{weak_form_discretequad_full2}
\end{equation}
where we defined

\begin{equation}
\begin{aligned}
    D^h(\cdot,v^h)\coloneqq\sum_{\Omega_e^h\in\mathcal{M}^h}\sum_{q=1}^{N_q}\sum_{p=1}^{N_{qp}}&\left[v^h(\mathbf{x}^e_{qp})-v^h(\mathbf{x}^e_q)\right]\gamma(\mathbf{x}^e_q,\mathbf{x}^e_{qp})\\&\left[(\cdot)(\mathbf{x}^e_{qp})-(\cdot)(\mathbf{x}^e_q)\right]\omega^e_{qp}\omega^e_q,
\end{aligned}
    \label{weak_form_Dh}
\end{equation}
and

\begin{equation}
\begin{aligned}
    &G^h(v^h)\coloneqq\sum_{\Omega_e^h\in\mathcal{M}^h_\Omega}\sum_{b=1}^{N_b}v^h(\mathbf{x}^e_b)b(\mathbf{x}^e_b)\omega^e_b.
\end{aligned}
    \label{weak_form_Gh}
\end{equation}

\subsection{Fully discrete variational form for nonlocal diffusion}

We combine the finite element discretization from Section \ref{FEM_discretization} with the discrete quadrature approach discussed in Section \ref{quadrature_discretization}. By substituting the continuous operators in (\ref{weak_form_discrete2}) with the discrete ones defined in 
Eqs. (\ref{weak_form_Dh}) and (\ref{weak_form_Gh}), we get

\begin{equation}
\begin{aligned}
    D^h(w^h,v^h)=G^h(v^h)-D^h(g^h,v^h),
\end{aligned}
    \label{weak_form_fullydiscrete1}
\end{equation}
which, by employing Eqs. (\ref{uh_fem}) and (\ref{uh_fem2}), results in the following linear system

\begin{equation}
\begin{aligned}
    \sum_{j=1}^{J_\Omega}D^h(\psi_j,\psi_i)u_j=G^h(\psi_i)-D^h(g^h,\psi_i),
\end{aligned}
    \label{weak_form_fullydiscrete2}
\end{equation}
for $i=1,\ldots,J_\Omega$.
Eq.~(\ref{weak_form_fullydiscrete2}) can be expressed in matrix form as

\begin{equation}
\begin{aligned}
    \mathbf{A}^h\mathbf{u}=\mathbf{f}^h,
\end{aligned}
    \label{weak_form_fullydiscrete3}
\end{equation}
where $\mathbf{A}^h$ is a $J_\Omega\times J_\Omega$ matrix with components 

\begin{equation}
\begin{aligned}
    A^h_{ij}&=D^h(\psi_j,\psi_i)\\
    &=\sum_{\Omega_e^h\in\mathcal{M}^h}\sum_{q=1}^{N_q}\sum_{p=1}^{N_{qp}}\left[\psi_i(\mathbf{x}^e_{qp})-\psi_i(\mathbf{x}^e_q)\right]\gamma(\mathbf{x}^e_q,\mathbf{x}^e_{qp})\left[\psi_j(\mathbf{x}^e_{qp})-\psi_j(\mathbf{x}^e_q)\right]\omega^e_{qp}\omega^e_q,
\end{aligned}
    \label{weak_form_fullydiscrete4}
\end{equation}
$\mathbf{f}^h$ is a $J_\Omega\times 1$ vector with components

\begin{equation}
\begin{aligned}
    f^h_{i}&=G^h(\psi_i)-D^h(g^h,\psi_i)=\sum_{\Omega_e^h\in\mathcal{M}^h_\Omega}\sum_{b=1}^{N_b}\psi_i(\mathbf{x}^e_b)b(\mathbf{x}^e_b)\omega^e_b\\
    &-\sum_{\Omega_e^h\in\mathcal{M}^h}\sum_{q=1}^{N_q}\sum_{p=1}^{N_{qp}}\left[\psi_i(\mathbf{x}^e_{qp})-\psi_i(\mathbf{x}^e_q)\right]\gamma(\mathbf{x}^e_q,\mathbf{x}^e_{qp})\left[g^h(\mathbf{x}^e_{qp})-g^h(\mathbf{x}^e_q)\right]\omega^e_{qp}\omega^e_q,
\end{aligned}
    \label{weak_form_fullydiscrete5}
\end{equation}
and $\mathbf{u}$ is a vector of size $J_\Omega\times 1$ containing the set of unknown coefficients $\{u_j\}_{j=1}^{J_\Omega}$ to be determined.

%%%%
\section{Properties of the numerical scheme}\label{sec:convergence}

In this section, we investigate the numerical properties of the proposed scheme. We first describe different types of convergence in the context of nonlocal models (see Figure \ref{Convergence_types_PD}) and then provide a convergence analysis in the $H^1$ norm in a simplified, one-dimensional setting.

\begin{figure}[H]
\begin{center}
\scalebox{1.0}{\includegraphics[trim = 23mm 40mm 23mm 23mm, clip=true,width=1\textwidth]{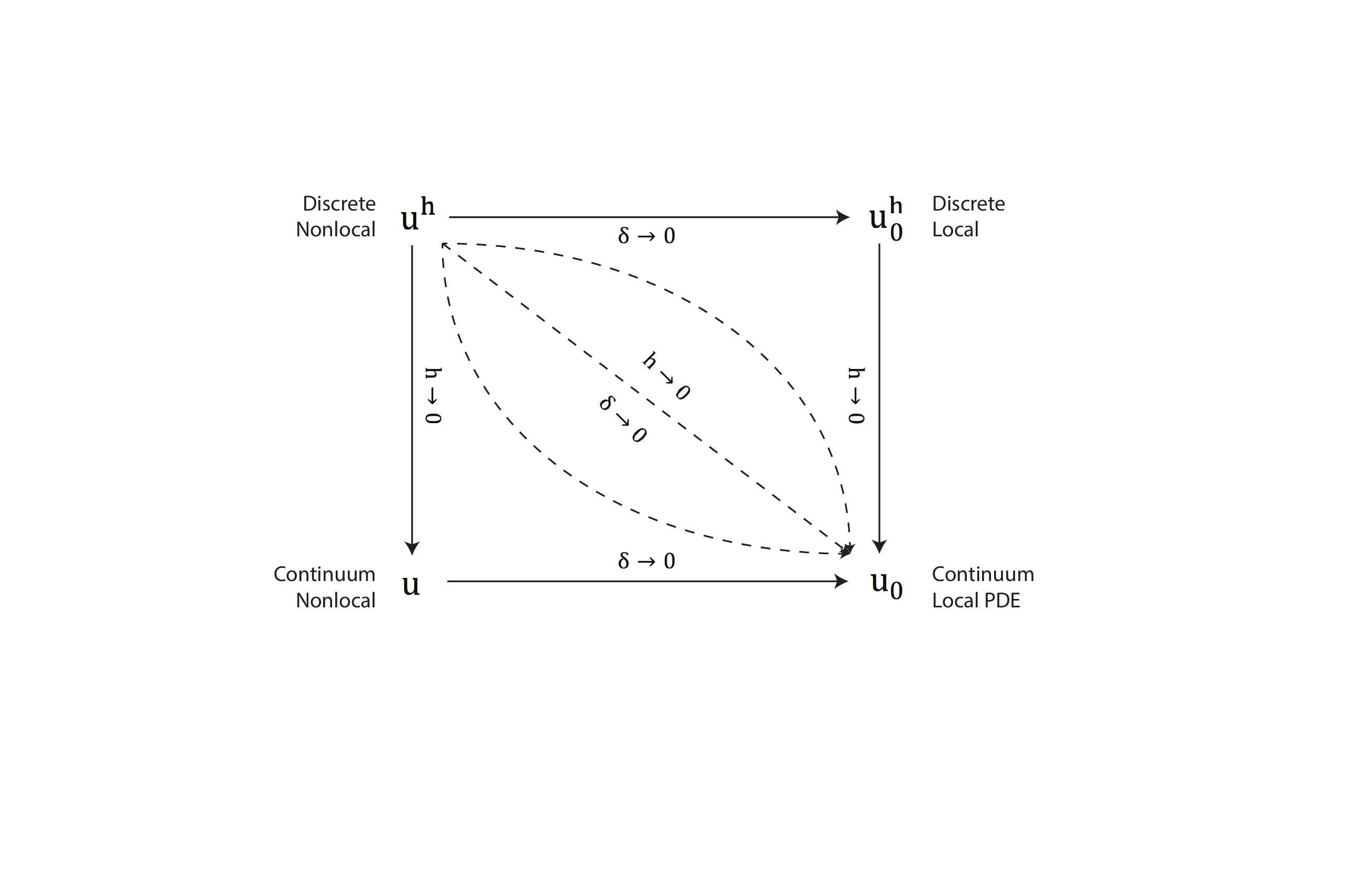}} 
\caption{Different convergence paths in finite-length nonlocal model. $u$ and $u_0$ represent the continuum nonlocal and local solutions, respectively, while $u^h$ and $u_0^h$ are their discrete counterparts.}
\label{Convergence_types_PD}
\end{center}
\end{figure}

%%%
\subsection{Brief review of asymptotically compatible schemes}
As described in Section \ref{sec:Nonlocal_diff_model}, continuum nonlocal models are characterized by the length scale $\delta$. Under proper regularity assumptions, as $\delta\to 0$, nonlocal solutions converge to their local, PDE counterparts \cite{du12}; we refer to this type of convergence as $\delta$-convergence. When a discretization scheme is employed, its size $h$ introduces a second length scale. For a fixed horizon $\delta$, a discretization scheme is $h$-convergent if the nonlocal discrete solution converges to the continuum nonlocal solution as $h\rightarrow0$.
Lastly, a discretization scheme is called  \textit{asymptotically compatible} if, in addition to the $\delta$- and $h$-convergence above, the discrete solution to the nonlocal problem also converges to the analytical solution of its local PDE counterpart as $\delta\rightarrow0$ and $h\rightarrow0$ \cite{XTian2014_AC,peryhandbook}. For numerical schemes where $h$ and $\delta$ are tied together by the relationship $m=\delta/h$, the term \textit{asymptotic compatibility} refers only to the last type of convergence described above, i.e., meaning that as $\delta\rightarrow0$ and $h\rightarrow0$, the discrete solution to the nonlocal problem converges to the analytical solution of the associated local problem \cite{trask2019asymptotically,Leng2021_AsymptoticallyCR}.
As the proposed scheme is such that $m=\delta/h\in\mathbb{N}$, we will only focus on the latter type of convergence (i.e.,  $\delta\rightarrow0$ and $h\rightarrow0$ simultaneously).

\subsection{Convergence analysis}

In this section we derive a preliminary estimate for the convergence of discrete solutions obtained via inexact quadrature of the inner integral. We assume that the outer quadrature is performed with a high-accuracy scheme whose contribution to the overall error can be considered negligible. In the following analysis we hence assume the outer integration to be exact. Therefore,

\begin{equation}
\begin{aligned}
    D^h(u^h,u^h)&=\int_{\Omega\cup\mathscr{B}\Omega}\sum_{j=1}^{N_j}\gamma(\mathbf{x},\mathbf{x}_j)\left[u^h(\mathbf{x}_j)-u^h(\mathbf{x})\right]^2\omega_j d\mathbf{x}\\
    &=\sum_{\Omega_e^h\in\mathcal{M}^h}\int_{\Omega_e^h}\sum_{j=1}^{N_j}\gamma(\mathbf{x},\mathbf{x}_j)\left[u^h(\mathbf{x}_j)-u^h(\mathbf{x})\right]^2\omega_j d\mathbf{x},
\end{aligned}
    \label{Proof1_Dh}
\end{equation}
where $\mathbf{x}_j$ and $\omega_j$ are the $j$-th inner quadrature point and associated weight in the ball $\mathscr{H}(\mathbf{x},\delta)$, respectively, and $N_j$ is the total number of inner quadrature points. Furthermore, we also restrict ourselves to kernels of the form \eqref{kernel_form1}. 

%%%%%%%%%%%%%%%%%%%%%%%%%%%%%%%%%%%%
\subsubsection{Uniform $\mathcal{V}^h$-coercivity}\label{proof_coercivity}

%{\color{red}[Should we say right away that we consider the $H^1$ metric?]} 
We show, under certain conditions, that the approximate bilinear form $D^h(\cdot,\cdot):\mathcal{V}^h\times\mathcal{V}^h\rightarrow\mathbb{R}$ is uniformly $\mathcal{V}^h$-coercive. Recall that we are considering a $C^0$ linear FE approximation. Therefore, $\forall \xb$

\begin{equation}\label{Proof2_uh}
    u^h(\xbj)-u^h(\xb) =\left\{\begin{aligned}
         \left.\nabla u^h(\xb)\right|_e\cdot(\xbj-\xb), &\ \text{for $\xbj$, $\xb$ in element $e$,}\\\
        u^h(\xbj)-u^h(\xb), &\ \text{otherwise.}\\
\end{aligned}\right.   
\end{equation}

We assume that we have a subset of the quadrature points $\{\mathbf{x}_j\}$ that are in the same element as $\mathbf{x}$, $\{\mathbf{x}_{j_\text{in}}\}$, and another subset of the quadrature points that are not, $\{\mathbf{x}_{j_\text{out}}\}$. Membership of points in these subsets is linked to the chosen spacing for the quadrature points; we assume that this spacing is small enough relative to the element size so that $\{\mathbf{x}_{j_\text{in}}\}$ is nonempty. We also assume that the inner quadrature weights $\{\omega_j\}$ are positive. Then, Eq.~(\ref{Proof1_Dh}) can be recast as

\begin{equation}
\begin{aligned}
    D^h(u^h,u^h)=&\sum_{\Omega_e^h\in\mathcal{M}^h}\int_{\Omega_e^h}\left[\sum_{{j_\text{in}}=1}^{N_{j_\text{in}}}\gamma(\mathbf{x},\mathbf{x}_{j_\text{in}})\left[u^h(\mathbf{x}_{j_\text{in}})-u^h(\mathbf{x})\right]^2\omega_{j_\text{in}}\right.\\
    &\left.+\sum_{{j_\text{out}}=1}^{N_{j_\text{out}}}\gamma(\mathbf{x},\mathbf{x}_{j_\text{out}})\left[u^h(\mathbf{x}_{j_\text{out}})-u^h(\mathbf{x})\right]^2\omega_{j_\text{out}}\right] d\mathbf{x}\\
    \geq& \sum_{\Omega_e^h\in\mathcal{M}^h}\int_{\Omega_e^h}\sum_{{j_\text{in}}=1}^{N_{j_\text{in}}}\gamma(\mathbf{x},\mathbf{x}_{j_\text{in}})\left[u^h(\mathbf{x}_{j_\text{in}})-u^h(\mathbf{x})\right]^2\omega_{j_\text{in}}d\mathbf{x}\\
    =&\frac{\zeta}{\delta^{d+2}}\sum_{\Omega_e^h\in\mathcal{M}^h}\int_{\Omega_e^h}\sum_{{j_\text{in}}=1}^{N_{j_\text{in}}}\left[u^h(\mathbf{x}_{j_\text{in}})-u^h(\mathbf{x})\right]^2\omega_{j_\text{in}}d\mathbf{x}\\
    =&\frac{\zeta}{\delta^{d+2}}\sum_{\Omega_e^h\in\mathcal{M}^h}\int_{\Omega_e^h}\sum_{{j_\text{in}}=1}^{N_{j_\text{in}}}\left[\nabla u^h(\xb)\right|_e\cdot(\mathbf{x}_{j_\text{in}}-\xb)]^2\omega_{j_\text{in}}d\mathbf{x}.
\end{aligned}
    \label{Proof3_Dh_inout}
\end{equation}

For simplicity, we now restrict our discussion to the one-dimensional case.  In this setting, we assume that there exists a constant $C > 0$ independent of $h$ and $\delta$, such that $C\delta < \omega_\text{min} < \omega_{j_\text{in}}$ (which we verify by direct computation in \ref{sec:1d-weights}). Under these assumptions, we have the following coercivity result for the discrete bilinear form.
\begin{lem}\label{lem:coercive} There exists a constant $c>0$ independent of $h$ and $\delta$ such that $\forall u^h\in\mathcal{V}^h_0$ 
\begin{equation}\label{Proof5_Dh_inout2}
\begin{aligned}
D^h(u^h,u^h)\geq c\vert u^h\vert_{H^1}^2.
\end{aligned}
\end{equation}
\end{lem}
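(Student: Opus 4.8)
The plan is to continue directly from the lower bound already established in Eq.~(\ref{Proof3_Dh_inout}), which in the one-dimensional case ($d=1$) reads
\begin{equation*}
D^h(u^h,u^h)\geq\frac{\zeta}{\delta^{3}}\sum_{\Omega_e^h\in\mathcal{M}^h}\int_{\Omega_e^h}\sum_{j_\text{in}}\left[(u^h)'|_e\,(x_{j_\text{in}}-x)\right]^2\omega_{j_\text{in}}\,dx .
\end{equation*}
Since $u^h$ is piecewise linear, $(u^h)'|_e$ is constant on each element and can be pulled out of both the inner sum and the outer integral. Recalling that $\vert u^h\vert_{H^1}^2=\sum_e \big((u^h)'|_e\big)^2 h_e$, where $h_e$ denotes the length of $\Omega_e^h$, the whole estimate reduces to an element-by-element bound: it suffices to show that
\begin{equation*}
\frac{\zeta}{\delta^{3}}\int_{\Omega_e^h}\sum_{j_\text{in}}(x_{j_\text{in}}-x)^2\,\omega_{j_\text{in}}\,dx\geq c\,h_e
\end{equation*}
for a constant $c>0$ independent of $h$ and $\delta$.

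Next I would insert the two available lower bounds. First, the weight bound $\omega_{j_\text{in}}>\omega_\text{min}>C\delta$ (verified in \ref{sec:1d-weights}) supplies one power of $\delta$. Second, I discard every inner point except the one closest to $x$ — legitimate since each remaining summand is a square times a positive weight — and use that, by the grid construction in Eq.~(\ref{quad_points_ball_grid}), the nearest inner quadrature points to $x$ sit at $x\pm\overline h/2$, so that $(x_{j_\text{in}}-x)^2\geq\overline h^2/4$. The crucial observation is that the inner grid spacing $\overline h=\delta/\overline N_{qp,\delta}$ is proportional to $\delta$, whence $(x_{j_\text{in}}-x)^2\geq \delta^2/(4\overline N_{qp,\delta}^2)$ contributes two further powers of $\delta$. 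Combining the two, $\sum_{j_\text{in}}(x_{j_\text{in}}-x)^2\,\omega_{j_\text{in}}\geq C\delta\cdot\delta^2/(4\overline N_{qp,\delta}^2)$, so the three powers of $\delta$ cancel the $\delta^{-3}$ prefactor exactly, leaving the $h,\delta$-independent constant $\zeta C/(4\overline N_{qp,\delta}^2)$.

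To make the nearest-point bound uniform in $x$ I must check that, for every $x\in\Omega_e^h$, at least one of the two candidates $x\pm\overline h/2$ genuinely lies in the same element and hence belongs to $\{x_{j_\text{in}}\}$. Writing $\Omega_e^h=(a,b)$, if $x$ lies in the left half then $x+\overline h/2\in(a,b)$, while if $x$ lies in the right half then $x-\overline h/2\in(a,b)$; both inclusions require only $\overline h<h_e$, which is precisely the quantitative form of the standing assumption that the quadrature spacing is small enough relative to the element size for $\{x_{j_\text{in}}\}$ to be nonempty. The per-$x$ bound then holds throughout $\Omega_e^h$; integrating over the element produces the factor $h_e$, and summing over elements with the reinstated $\big((u^h)'|_e\big)^2$ weights yields $D^h(u^h,u^h)\geq c\,\vert u^h\vert_{H^1}^2$ with $c=\zeta C/(4\overline N_{qp,\delta}^2)$.

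I expect the main obstacle to be the bookkeeping of powers of $\delta$ rather than any deep difficulty: the argument closes only because the nearest-neighbour distance scales like $\delta$ (through $\overline h$) and the weights scale like $\delta$, so their product exactly offsets the $\delta^{-3}$ normalization of the kernel \eqref{kernel_form1}; a cruder bound using the element size $h$ for the nearest-neighbour distance would destroy this balance. The one genuinely delicate point is keeping the geometric nearest-point argument uniform over the whole element, including $x$ adjacent to an element edge, which is exactly why I retain a single nearest point — guaranteed to exist by $\overline h<h_e$ — rather than a symmetric pair.
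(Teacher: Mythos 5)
Your proof is correct, but the key quantitative step is genuinely different from the paper's. Both arguments start from Eq.~(\ref{Proof3_Dh_inout}), pull the constant slope $(u^h)'|_e$ out of each element, and invoke the weight bound $\omega_{j_\text{in}}>\omega_\text{min}>C\delta$ from \ref{sec:1d-weights}. The paper then fixes each inner quadrature point $x_{j_\text{in}}$ and restricts the \emph{outer} integration to $\Omega_e^h\setminus(x_{j_\text{in}}-h/4,\,x_{j_\text{in}}+h/4)$, so both the squared distance and the measure of the integration set scale with the element size $h$; this yields the intermediate bound $D^h(u^h,u^h)\geq C(h^2/\delta^2)\vert u^h\vert_{H^1}^2$, and the proportionality $h\sim\delta$ is invoked at the very end to absorb the factor $h^2/\delta^2$ into the constant. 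You instead fix $x$ and retain only the single nearest inner point at $x\pm\overline h/2$, so the squared distance scales with the inner spacing $\overline h\sim\delta$; the three powers of $\delta$ then cancel the kernel normalization exactly and your constant $\zeta C/(4\overline N_{qp,\delta}^2)$ is free of any $h/\delta$ ratio, with the proportionality entering only through the geometric hypothesis $\overline h<h_e$ needed to guarantee a nearest in-element point for every $x$ (a slightly stronger, but explicitly justified, quantitative version of the paper's nonemptiness assumption on $\{x_{j_\text{in}}\}$). Your route is arguably a bit sharper and, because it is carried out element by element with $h_e$, extends more transparently to non-uniform meshes; the paper's route avoids any nearest-point geometry by using all inner points and the bulk of the element. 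Both deliver the stated coercivity under the standing assumption $h\sim\delta$.
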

\begin{proof}
Restricting Eq.~(\ref{Proof3_Dh_inout}) to the one-dimensional setting, using our assumed lower bound on $\omega_{j_\text{in}}$, abbreviating restriction of $u^h$ to element $e$ as $u^h_e$, and allowing the symbol $C$ to be a generic constant independent of $h$ and $\delta$ (possibly with different numerical values in different places), we get
\newpage
\begin{equation}\label{Proof4_Dh_inout2}
\begin{aligned}
D^h(u^h,u^h)&\geq\frac{\zeta}{\delta^{3}}\sum_{\Omega_e^h\in\mathcal{M}^h}\int_{\Omega_e^h}\sum_{{j_\text{in}}=1}^{N_{j_\text{in}}}\left[u^h({x}_{j_\text{in}})-u^h({x})\right]^2\omega_{j_\text{in}}d{x}\\
&=\frac{\zeta}{\delta^{3}}\sum_{\Omega_e^h\in\mathcal{M}^h}\int_{\Omega_e^h}\left\lbrack\sum_{{j_\text{in}}=1}^{N_{j_\text{in}}}\left\{\frac{du^h_e}{dx}\cdot(x_{j_\text{in}}-x)\right\}^2\omega_{j_\text{in}}\right\rbrack\,d{x}\\
&=\frac{\zeta}{\delta^{3}}\sum_{\Omega_e^h\in\mathcal{M}^h}\left(\frac{du^h_e}{dx}\right)^2\int_{\Omega_e^h}\left\lbrack\sum_{{j_\text{in}}=1}^{N_{j_\text{in}}}(x_{j_\text{in}}-x)^2\omega_{j_\text{in}}\right\rbrack\,d{x}\\
&=\frac{\zeta}{\delta^{3}}\sum_{\Omega_e^h\in\mathcal{M}^h}\left(\frac{du^h_e}{dx}\right)^2\sum_{{j_\text{in}}=1}^{N_{j_\text{in}}}\left\{\int_{\Omega_e^h}\sum_{{j_\text{in}}=1}^{N_{j_\text{in}}}(x_{j_\text{in}}-x)^2\omega_{j_\text{in}}d{x}\right\}\\
&\geq\frac{\zeta}{\delta^{3}}\sum_{\Omega_e^h\in\mathcal{M}^h}\left(\frac{du^h_e}{dx}\right)^2\sum_{{j_\text{in}}=1}^{N_{j_\text{in}}}\left\{\int_{\Omega_e^h\setminus (x_{j_\text{in}}-h/4,x_{j_\text{in}}+h/4)}(x_{j_\text{in}}-x)^2\omega_{j_\text{in}}d{x}\right\}\\
&\geq\frac{\zeta}{\delta^{3}}\sum_{\Omega_e^h\in\mathcal{M}^h}\left(\frac{du^h_e}{dx}\right)^2\sum_{{j_\text{in}}=1}^{N_{j_\text{in}}}\left\{\int_{\Omega_e^h\setminus (x_{j_\text{in}}-h/4,x_{j_\text{in}}+h/4)}\left(\frac{h}{4}\right)^2\omega_{j_\text{in}}d{x}\right\}\\
&\geq\frac{\zeta}{\delta^{3}}\sum_{\Omega_e^h\in\mathcal{M}^h}\left(\frac{du^h_e}{dx}\right)^2\sum_{{j_\text{in}}=1}^{N_{j_\text{in}}}\left\{\left(\frac{h}{2}\right)\left(\frac{h}{4}\right)^2\omega_{j_\text{in}}\right\}\\
&\geq\frac{\zeta}{\delta^{3}}\sum_{\Omega_e^h\in\mathcal{M}^h}\left(\frac{du^h_e}{dx}\right)^2\left\{Ch^3\omega_\text{min}\right\}\\
&\geq\frac{\zeta}{\delta^{3}}\sum_{\Omega_e^h\in\mathcal{M}^h}\left(\frac{du^h_e}{dx}\right)^2\left\{Ch^3\delta\right\}\\
&\geq\frac{C}{\delta^{3}}\left(\sum_{\Omega_e^h\in\mathcal{M}^h}\left(\frac{du^h_e}{dx}\right)^2h\right)h^2\delta\\
&\geq\frac{Ch^2}{\delta^2}\vert u^h\vert_{H^1}^2,
\end{aligned}
\end{equation}
which gives the desired result when $h\sim\delta$.  
\end{proof}
%
%Applying the $H^1$ Poincar\'{e} lemma leads immediately to
%\begin{cor}
%For $D^h$ satisfying the assumptions of Lemma \ref{lem:coercive}, $\forall u^h\in\mathcal{V}^h_0$
%\begin{equation}
%   D^h(u^h,u^h) \geq C\Vert u^h\Vert^2_{L^2}.
%\end{equation}
%\end{cor}

\subsubsection{Preliminary convergence estimate}
We prove the convergence of numerical solutions for a simple one-dimensional case with a uniform grid and $\delta=h$. To do that, we first prove a lemma that holds in more general situations. 
%Let $u$ represent the continuum nonlocal solution and $u^h$ its discrete counterpart. 
We assume that for any $v^h\in \mathcal{V}^h_0$, $\int b(\xb) v^h(\xb)d\xb$ is exactly integrated. {By using the same arguments as in Strang's first lemma,} we have the following result.

\begin{lem}
\label{lem:strang}
There exists $C>0$ independent of $\delta$ such that 
\[
\| u-u^h\|_{\mathcal{V}} \leq C  \inf_{v^h\in \mathcal{V}_0^h} \left(\| u-v^h\|_{\mathcal{V}} +
\sup_{w^h\in \mathcal{V}_0^h}\frac{|D(v^h, w^h)-D^h( v^h, w^h )|}{\| w^h\|_{\mathcal{V}}}  \right). 
\]
In addition, if $u\in H^1$, then
\[
\| u-u^h\|_{H^1} \leq C \inf_{v^h\in \mathcal{V}_0^h} \left(\| u-v^h\|_{H^1} +
\sup_{w^h\in \mathcal{V}_0^h}\frac{|D(v^h, w^h)-D^h( v^h, w^h )|}{\| w^h\|_{\mathcal{V}}} \right).
\]
\end{lem}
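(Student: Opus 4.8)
The plan is to treat the inexact inner quadrature as a variational crime and adapt Strang's first lemma, relying on three facts. First, the uniform $\mathcal{V}^h$-coercivity of Lemma~\ref{lem:coercive}, which gives $D^h(w^h,w^h)\ge c\,\vert w^h\vert_{H^1}^2$ for $w^h\in\mathcal{V}_0^h$ with $c$ independent of $h,\delta$. Second, boundedness of the exact form: since $D$ is the nonlocal energy inner product, Cauchy--Schwarz yields $|D(a,b)|\le \|a\|_{\mathcal{V}}\|b\|_{\mathcal{V}}$. Third, a consistency identity: fixing $v^h\in\mathcal{V}_0^h$ and setting $w^h=u^h-v^h$ (which lies in $\mathcal{V}_0^h$ once the discrete and exact solutions share their volume-constraint data, as in the homogeneous setting of the subsequent estimate), the discrete equation $D^h(u^h,w^h)=G^h(w^h)$, the assumed exact integration of the load giving $G^h(w^h)=G(w^h)$, and the continuous weak form $G(w^h)=D(u,w^h)$ (valid as $\mathcal{V}_0^h\subset\mathcal{V}_0$) combine to give
\[
D^h(u^h,w^h)=D(u,w^h).
\]

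With this identity I would expand and insert $\pm D(v^h,w^h)$, obtaining
\[
D^h(w^h,w^h)=D(u-v^h,w^h)+\bigl[D(v^h,w^h)-D^h(v^h,w^h)\bigr].
\]
For the energy-norm estimate I first upgrade Lemma~\ref{lem:coercive} to the $\mathcal{V}$ norm: for $w^h\in\mathcal{V}_0^h$, a nonlocal Poincar\'e inequality together with $D(w^h,w^h)\lesssim\vert w^h\vert_{H^1}^2$ (both uniform in $\delta$ for the integrable kernel \eqref{kernel_form1}) give $\|w^h\|_{\mathcal{V}}\lesssim\vert w^h\vert_{H^1}$, hence $D^h(w^h,w^h)\gtrsim\|w^h\|_{\mathcal{V}}^2$. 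Bounding the approximation term by $\|u-v^h\|_{\mathcal{V}}\|w^h\|_{\mathcal{V}}$ and the consistency term by $E(v^h)\|w^h\|_{\mathcal{V}}$, where $E(v^h):=\sup_{w^h}|D(v^h,w^h)-D^h(v^h,w^h)|/\|w^h\|_{\mathcal{V}}$, and dividing by $\|w^h\|_{\mathcal{V}}$, I get $\|u^h-v^h\|_{\mathcal{V}}\le C(\|u-v^h\|_{\mathcal{V}}+E(v^h))$. A triangle inequality and the infimum over $v^h\in\mathcal{V}_0^h$ then yield the first estimate.

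For the $H^1$ estimate I would argue directly from the $H^1$-seminorm coercivity rather than from the $\mathcal{V}$-norm bound, because for an integrable kernel the $\mathcal{V}$ norm is equivalent only to the $L^2$ norm and cannot control derivatives. Starting again from $c\vert w^h\vert_{H^1}^2\le D^h(w^h,w^h)$, I bound the right-hand side by $(\|u-v^h\|_{\mathcal{V}}+E(v^h))\|w^h\|_{\mathcal{V}}\le C(\|u-v^h\|_{\mathcal{V}}+E(v^h))\vert w^h\vert_{H^1}$, using again $\|w^h\|_{\mathcal{V}}\lesssim\vert w^h\vert_{H^1}$, so that $\vert w^h\vert_{H^1}\le C(\|u-v^h\|_{\mathcal{V}}+E(v^h))$. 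Since $u\in H^1$, the uniform bound $\|u-v^h\|_{\mathcal{V}}\lesssim\|u-v^h\|_{H^1}$ replaces the energy norm in the approximation term, and a final triangle inequality with the Poincar\'e bound $\|u^h-v^h\|_{H^1}\lesssim\vert u^h-v^h\vert_{H^1}$ on $\mathcal{V}_0^h$ delivers the second estimate after taking the infimum.

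The Strang manipulation is routine; the delicate point, where I would spend the most care, is that \emph{every} constant---the coercivity constant of Lemma~\ref{lem:coercive}, the nonlocal Poincar\'e constant, and above all the domination $D(w,w)\lesssim\vert w\vert_{H^1}^2$ of the nonlocal energy by the $H^1$ seminorm---must be uniform in $\delta$ as $\delta\sim h\to0$; this is precisely what the kernel normalization $\zeta/\delta^{d+2}$ secures and is the crux of asymptotic compatibility. A secondary technical point is the boundary bookkeeping, since the argument presumes $u^h-v^h\in\mathcal{V}_0^h$, which holds whenever $u$, $u^h$ and $v^h$ share their discrete volume-constraint data, as in the homogeneous one-dimensional setting treated next.
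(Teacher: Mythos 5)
Your proposal is correct and follows essentially the same route as the paper: coercivity from Lemma~\ref{lem:coercive}, the Galerkin-consistency identity $D^h(u^h,w^h)=D(u,w^h)$ from exact load integration, boundedness of $D$ in the energy norm, and the $\delta$-uniform embedding $\|v\|_{\mathcal{V}}\leq C\|v\|_{H^1}$ to convert the $H^1$-seminorm coercivity into bounds in both norms, followed by the triangle inequality. Your explicit flagging of the uniformity of all constants in $\delta$ and of the requirement $u^h-v^h\in\mathcal{V}_0^h$ matches assumptions the paper makes implicitly.
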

\begin{proof}

By Lemma \ref{lem:coercive}, for any $v^h \in \mathcal{V}_0^h$,  we have 
\[
\begin{split}
c |u^h - v^h|_{H^1}^2  \leq&  D^h(u^h- v^h, u^h- v^h )  \\
 =& D(u-v^h, u^h -v^h) +(D(v^h, u^h -v^h)
 - D^h( v^h, u^h- v^h )) \\
 &+ (D^h(u^h, u^h- v^h) - D(u,u^h -v^h )) \\
 =&  D(u-v^h, u^h -v^h) + (D(v^h, u^h -v^h)-D^h( v^h, u^h- v^h )).
\end{split}
\]
By the boundedness of the bilinear form, i.e., $|D(u-v^h, u^h -v^h)|\leq  C \| u-v^h\|_{\mathcal{V}} \| u^h -v^h\|_{\mathcal{V}} $, where we write $\| w\|_{\mathcal{V}} = \sqrt{D(w,w)}$ for $w\in \mathcal{V}$, we have

\begin{equation}
\label{eq:stranglemma_1}
\begin{split}
c \frac{|u^h - v^h|_{H^1}^2}{\| u^h -v^h\|_{\mathcal{V}}}  &\leq  C \| u-v^h\|_{\mathcal{V}} + \frac{|D(v^h, u^h -v^h)-D^h( v^h, u^h- v^h )|}{\| u^h -v^h\|_{\mathcal{V}}} \\
& \leq  C \| u-v^h\|_{\mathcal{V}} +
\sup_{w^h\in \mathcal{V}_0^h}\frac{|D(v^h, w^h)-D^h( v^h, w^h )|}{\| w^h\|_{\mathcal{V}}}. 
\end{split}
\end{equation}
Notice that $H^1$ is continuously embedded in $\mathcal{V}$, i.e., 

\[
\| v\|_{\mathcal{V}}\leq C \| v\|_{H^1} \quad \forall v\in H^1,
\]
where the constant $C$ is independent of $\delta$ (see e.g., \cite{BBM01}). From \eqref{eq:stranglemma_1}, for any $v^h \in \mathcal{V}_0^h$

\[
\| u^h - v^h \|_{\mathcal{V}}\leq C \left( \| u-v^h\|_{\mathcal{V}} +
\sup_{w^h\in \mathcal{V}_0^h}\frac{|D(v^h, w^h)-D^h( v^h, w^h )|}{\| w^h\|_{\mathcal{V}}} \right)
\]
and, if in addition $u\in H^1$, 

\[
\| u^h - v^h \|_{H^1}\leq C \left( \| u-v^h\|_{H^1} +
\sup_{w^h\in \mathcal{V}_0^h}\frac{|D(v^h, w^h)-D^h( v^h, w^h )|}{\| w^h\|_{\mathcal{V}}} \right).
\]
Therefore from the triangle inequality 

\[
\| u - u^h\|\leq \| u - v^h\| + \| u^h - v^h \|
\]
with either the $\mathcal{V}$-norm or the $H^1$-norm, we can get the desired result. 
\end{proof}

{The following theorem provides a convergence result for} the simple one-dimensional case with a uniform grid, $\delta=h$ and a kernel function $\gamma(x,y) = \frac{1}{\delta^3} 1_{\{ |y-x|<\delta \}}$, where, for simplicity, we removed the constant $\zeta$.
\begin{thm}\label{thm:conv}
Assume we have a uniform grid in one-dimension and $\delta=h$.
In addition, assume that $u\in H^2$. Then,
\[
\| u - u^h\|_{H^1} \leq C h ,
\]
{where $C>0$ is a constant that depends on $\| u\|_{H^2}$, but is independent of $h$ and $\delta$.} 
\end{thm}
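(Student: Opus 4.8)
The plan is to invoke the abstract estimate of Lemma~\ref{lem:strang} with a concrete comparison function and then to spend the bulk of the effort on the consistency (inner-quadrature) term. I would take $v^h = I_h u \in \mathcal{V}^h_0$, the continuous piecewise linear nodal interpolant of $u$. Since $u \in H^2$ in one dimension, standard finite element interpolation gives $\|u - I_h u\|_{H^1} \le C h \|u\|_{H^2}$, which controls the first term inside the infimum of Lemma~\ref{lem:strang}. By Lemma~\ref{lem:strang} it then suffices to prove that the consistency term $\sup_{w^h \in \mathcal{V}^h_0}|D(v^h,w^h)-D^h(v^h,w^h)|/\|w^h\|_{\mathcal{V}}$ is also $O(h\|u\|_{H^2})$; the whole theorem rests on this estimate.

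To attack the consistency term I would use that, by assumption, the outer integral is exact, so that $D(v^h,w^h)-D^h(v^h,w^h) = \int_{\Omega\cup\mathscr{B}\Omega} E_x\,dx$, where $E_x$ is the error of the inner GMLS quadrature applied to $\Phi_x(y)=[w^h(y)-w^h(x)]\gamma(x,y)[v^h(y)-v^h(x)]$. The structural fact that drives everything is the exactness built into $\mathbf{V}_h$: the weights integrate $\gamma(x,y)(y-x)^2$ exactly, and by the symmetric placement of the inner quadrature nodes the odd moment $\gamma(x,y)(y-x)$ is annihilated as well. Consequently, whenever $v^h$ and $w^h$ are affine on the whole ball — so that $\Phi_x$ collapses to $\gamma\,\nabla v^h\,\nabla w^h\,(y-x)^2$ — the inner quadrature is exact. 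The consistency functional therefore vanishes on pairs of affine functions, and, after expanding $\Phi_x$ about the affine extensions of $v^h$ and $w^h$ from the element of $x$, the entire error localizes to the gradient jumps of $v^h$ and $w^h$ across the interior nodes of $\mathscr{H}(x,\delta)$, which for a uniform grid with $\delta=h$ number only $O(1)$.

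Writing $\Phi_x$ relative to the affine extensions of $v^h$ and $w^h$ from the element of $x$ splits $E_x$ into terms carrying a jump of $\nabla v^h$ and terms carrying a jump of $\nabla w^h$. The $v$-jump terms are benign: for the interpolant of an $H^2$ function one has $\sum_{\text{nodes } n} h\,|[\nabla I_h u]_n|^2 \le C h^2 \|u\|_{H^2}^2$ (differences of elementwise averages of $u'$), so after a Cauchy--Schwarz in $x$ and the norm equivalence $\|w^h\|_{\mathcal{V}} \sim |w^h|_{H^1}$ on $\mathcal{V}^h_0$ (from the embedding $\|\cdot\|_{\mathcal{V}} \le C\|\cdot\|_{H^1}$ used above together with a nonlocal Poincar\'e inequality, both uniform in $\delta$) these terms contribute the desired $O(h\|u\|_{H^2})\|w^h\|_{\mathcal{V}}$. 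The genuine obstacle, and the step I expect to be hardest, is the $w$-jump terms: $[\nabla w^h]$ need not be small for a generic test function, and a termwise triangle-inequality bound of $E_x$ yields only $O(1)\,|v^h|_{H^1}|w^h|_{H^1}$, losing the crucial factor of $h$. Recovering it requires cancellation that a pointwise-in-$x$ estimate cannot detect. My plan here is to exploit the explicit one-dimensional weights derived in \ref{sec:1d-weights} — which for a single quadratic constraint are proportional to $(y_j-x)^2$ — together with the translation invariance and reflection symmetry of the uniform $\delta = h$ configuration, so that the contributions of each $w^h$-gradient jump from the two sides of a node cancel to leading order upon integration in $x$, leaving a remainder in which the surviving power of $h$ comes entirely from the smooth factor $\nabla v^h$ and from the $H^2$-regularity of $u$. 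Making this cancellation rigorous — equivalently, bounding the bilinear consistency functional, which vanishes on affine$\times$affine pairs, by the product of a second-order quantity of $v^h$ and only the energy norm of $w^h$ — is the crux on which the linear convergence rate depends.
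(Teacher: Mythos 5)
Your architecture is the paper's: Lemma~\ref{lem:strang} with $v^h=I_hu$, standard interpolation for the approximation term, and a consistency estimate driven by the exactness of the inner quadrature on $\gamma(x,y)(y-x)^2$, so that the error is supported on the gradient jumps. But you stop exactly where the paper's proof does its real work, and the mechanism you propose for closing the gap is not the one that succeeds. The paper writes $v^h,w^h$ on $\Omega^h_{i}\cup\Omega^h_{i\pm1}$ in terms of the element slopes $a_i,b_i$ and pairs the contribution of $\int_{\Omega^h_i}\int_{\Omega^h_{i+1}}$ with that of $\int_{\Omega^h_{i+1}}\int_{\Omega^h_i}$; the reflection symmetry of the uniform grid and of the quadrature points kills only the part of the error that is \emph{linear} in the jumps (the terms proportional to $a_{i+1}b_{i+1}-a_ib_i$). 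What survives is not a remainder carried by the smooth factor $\nabla v^h$ alone, but a term of size $Ch\,|a_{i+1}-a_i|\,|b_{i+1}-b_i|$ per interior node --- still bilinear in \emph{both} jumps. No further cancellation removes the $w^h$-jump for a general $w^h\in\mathcal V^h_0$, so pursuing "cancellation to leading order upon integration in $x$" of the $w$-jump contributions is a dead end.

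The missing ingredient is instead a \emph{lower} bound on the denominator: a direct computation with $\gamma=\delta^{-3}1_{\{|y-x|<\delta\}}$ and $\delta=h$ gives $\|w^h\|_{\mathcal V}^2=h\bigl(\tfrac23\sum_i b_i^2-\tfrac1{12}\sum_i(b_{i+1}-b_i)^2\bigr)$, and the elementary inequality $\sum_i(b_{i+1}-b_i)^2\le 4\sum_i b_i^2$ then yields $\|w^h\|_{\mathcal V}^2\ge \tfrac{h}{12}\sum_i(b_{i+1}-b_i)^2$. Thus the energy norm itself dominates the $\ell^2$ norm of the $w^h$-jumps with the correct $h$-scaling, and Cauchy--Schwarz finishes the argument: $h\sum_i|a_{i+1}-a_i||b_{i+1}-b_i|\le h\,\bigl(\sum_i|a_{i+1}-a_i|^2\bigr)^{1/2}\bigl(\sum_i|b_{i+1}-b_i|^2\bigr)^{1/2}\le Ch\cdot h^{1/2}\|u\|_{H^2}\cdot h^{-1/2}\|w^h\|_{\mathcal V}$, where the first factor uses $a_i=u'(x_i)$ and $|a_{i+1}-a_i|\le\int_{x_i}^{x_{i+1}}|u''|$ for the interpolant (this is your bound $\sum_n h\,|[\nabla I_hu]_n|^2\le Ch^2\|u\|_{H^2}^2$). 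In short: the $w$-jump factor need not be small, only absorbed into $\|w^h\|_{\mathcal V}$; with that replacement, and the explicit pairwise symmetry computation you only sketch, your outline becomes the paper's proof.
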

\begin{proof}
By Lemma \ref{lem:strang} and $u\in H^2$, we have 
\[
\| u-u^h\|_{H^1} \leq C  \inf_{v^h\in \mathcal{V}_0^h} \left(\| u-v^h\|_{H^1} +
\sup_{w^h\in \mathcal{V}_0^h}\frac{|D(v^h, w^h)-D^h( v^h, w^h )|}{\| w^h\|_{\mathcal{V}}}  \right)
\]
where, from Eq. (\ref{Vh0}), $\mathcal{V}^h_0$ is the space of continuous piecewise linear functions that satisfy zero Dirichlet boundary conditions. Taking $v^h:= I_h u$,  the piecewise linear interpolation of $u$, then it is well-known in finite element analysis that 

\[
\| u-I_h u \|_{H^1}\leq C h \| u\|_{H^2}. 
\]

Let $\Omega\cup\mathscr{B}\Omega = \cup_{i=1}^N \Omega^h_i$ and extend functions by zero outside $\Omega\cup\mathscr{B}\Omega$ when necessary (e.g., on $\mathscr{B}\Omega^{t_e}$). Then, by assuming $\delta=h$, we have 

\begin{equation}
\begin{split}
&D(v^h, w^h)=\\ &= \sum_{i=1}^N \int_{\Omega^h_i } \int_{\mathscr{H}{(x,\delta)}} \gamma(x,y) (v^h(y) - v^h (x))  (w^h(y) - w^h (x))   dy dx  \\
&=  \sum_{i=1}^N \int_{\Omega^h_i } \int_{\Omega^h_{i-1} \cup \Omega^h_i \cup \Omega^h_{i+1}} \gamma(x,y) (v^h(y) - v^h (x))  (w^h(y) - w^h (x))   dy dx \,.
\end{split}
\end{equation} 
Notice that in the above equation, $\Omega^h_0$ and $\Omega^h_{N+1}$ are outside $\Omega\cup\mathscr{B}\Omega$. Since the functions are always zero on $\Omega^h_0, \Omega^h_1, \Omega^h_N, \Omega^h_{N+1}$, we see that $\int_{\Omega^h_1} \int_{\Omega^h_0}  \cdots$ and $\int_{\Omega^h_N} \int_{\Omega^h_{N+1}}  \cdots$ are zero. 
Assume that on each $\Omega^h_i$, $v^h(x) $ is a linear function of slope $a_i \in \mathbb{R}$, and $w^h(x) $ is a linear function of slope $b_i \in \mathbb{R}$, then we have 

\begin{equation}
\begin{aligned}
&\int_{\Omega^h_i } \int_{\Omega^h_i} \gamma(x,y) (v^h(y) - v^h (x))  (w^h(y) - w^h (x))   dy dx =\\&= \int_{\Omega^h_i } \int_{\Omega^h_i} \gamma(x,y) (y-x)^2 a_i b_i dy dx . 
\end{aligned}
\end{equation}
Now we calculate $\int_{\Omega^h_i } \int_{\Omega^h_{i+1}}  \cdots dy dx$.  In this case, we have $y \in \Omega^h_{i+1}$ and $x \in \Omega^h_i$. Let $s_i$ denote the point that connects $\Omega^h_i$ and $\Omega^h_{i+1}$, then we can write 

\begin{equation}
\begin{split}
&v^h(y) = v^h(s_i) + (y-s_i) a_{i+1}, \quad w^h(y) = w^h(s_i) + (y-s_i) b_{i+1} ,  \\
&v^h(x) = v^h(s_i) + (x-s_i) a_{i}, \quad w^h(x) = w^h(s_i) + (x-s_i) b_{i} ,
\end{split}
\end{equation}
for all $y\in \Omega^h_{i+1}$ and $x \in \Omega^h_i$. Therefore, 

\begin{equation}
v^h(y) - v^h(x) = (y-s_i) a_{i+1} + (s_i-x) a_i = (y-s_i) (a_{i+1} -a_i) + (y-x) a_i 
\end{equation}
and similarly for $w^h(y) - w^h(x) $. We then have 

\begin{equation}
\begin{aligned}
&\int_{\Omega^h_i } \int_{\Omega^h_{i+1}} \gamma(x,y) (v^h(y) - v^h (x))  (w^h(y) - w^h (x))   dy dx \\
= &\int_{\Omega^h_i } \int_{\Omega^h_{i+1}} \gamma(x,y)\left[(y-s_i) (a_{i+1} -a_i) + (y-x) a_i \right]  \\ &\phantom{ \int_{\Omega^h_i } \int_{\Omega^h_{i+1}} \gamma(x,y)}\left[(y-s_i) (b_{i+1} -b_i) + (y-x) b_i \right]   dy dx .    \\
\end{aligned}
\end{equation}
Notice that, in the above, there is a term $\int_{\Omega^h_i } \int_{\Omega^h_{i+1}}  \gamma(x,y)  (y-x)^2 a_i b_i dydx $ which can be combined with $\int_{\Omega^h_i } \int_{\Omega^h_{i}} \gamma(x,y)  (y-x)^2 a_i b_i  dy dx$. 
The rest of the terms can be written as 

\begin{equation}\label{eq:I_i_right}
\begin{aligned}
&\int_{\Omega^h_i } \int_{\Omega^h_{i+1}} \gamma(x,y) \left((y-s_i)^2 (a_{i+1} -a_i)(b_{i+1} - b_i)\right. \\ &\left.+ (y-s_i)(y-x) \left[ (a_{i+1}-a_i) b_i  + (b_{i+1} - b_i) a_i \right]  \right)   dy dx  \\
= & \int_{\Omega^h_i } \int_{\Omega^h_{i+1}} \gamma(x,y) \left((y-s_i)^2 (a_{i+1} b_{i+1} - a_i b_i) \right.\\ &\left.+ (y-s_i)(s_i-x) \left[ (a_{i+1}-a_i) b_i  + (b_{i+1} - b_i) a_i \right]  \right)   dy dx  \\
= & (a_{i+1} b_{i+1} - a_i b_i)   \int_{\Omega^h_i} \int_{s_i}^{x+\delta} \frac{(y-s_i)^2}{\delta^3}  dy dx \\ & +  \left[ (a_{i+1}-a_i) b_i  + (b_{i+1} - b_i) a_i \right]  \int_{\Omega^h_i} \int_{s_i}^{x+\delta} \frac{(y-s_i)(s_i-x)}{\delta^3}    dy dx . 
\end{aligned}
\end{equation}
We can similarly calculate $\int_{\Omega^h_i} \int_{\Omega^h_{i-1}} \cdots dydx$ and get  $\int_{\Omega^h_i } \int_{\Omega^h_{i-1}}  \gamma(x,y)  (y-x)^2 a_i b_i dydx  $ (which is to be combined with $\int_{\Omega^h_i } \int_{\Omega^h_{i}} \gamma(x,y)  (y-x)^2 a_i b_i  dy dx$) and 

\begin{equation}
\label{eq:I_i_left}
\begin{aligned}
&(a_{i-1} b_{i-1} - a_i b_i)   \int_{\Omega^h_i} \int_{x-\delta}^{s_{i-1}} \frac{(y-s_{i-1})^2}{\delta^3}  dy dx  + \\&  \left[ (a_{i-1}-a_i) b_i  + (b_{i-1} - b_i) a_i \right]  \int_{\Omega^h_i} \int_{x-\delta}^{s_{i-1}} \frac{(y-s_{i-1})(s_{i-1}-x)}{\delta^3}    dy dx .
\end{aligned}
 \end{equation}
Replacing $i-1$ with $i$ in \eqref{eq:I_i_left}, we get the contribution from $\int_{\Omega^h_{i+1}} \int_{\Omega^h_i} \cdots dydx$: 

 \begin{equation}
 \label{eq:I_i+1_left}
 \begin{aligned}
&(a_{i} b_{i} - a_{i+1} b_{i+1})   \int_{\Omega^h_{i+1}} \int_{x-\delta}^{s_{i}} \frac{(y-s_{i})^2}{\delta^3}  dy dx  \\&+  \left[ (a_{i}-a_{i+1}) b_{i+1}  + (b_{i} - b_{i+1}) a_{i+1} \right]  \int_{\Omega^h_{i+1}} \int_{x-\delta}^{s_{i}} \frac{(y-s_{i})(s_{i}-x)}{\delta^3}    dy dx 
\end{aligned}
 \end{equation}
Now by adding \eqref{eq:I_i_right} with \eqref{eq:I_i+1_left} and noticing, from symmetry, that 
  
\begin{equation}
\label{eq:symmetery_continuous}
\begin{split}
& \int_{\Omega^h_i} \int_{s_i}^{x+\delta} \frac{(y-s_i)^2}{\delta^3}  dy dx  = \int_{\Omega^h_{i+1}} \int_{x-\delta}^{s_{i}} \frac{(y-s_{i})^2}{\delta^3}  dy dx  \\
 &\int_{\Omega^h_i} \int_{s_i}^{x+\delta} \frac{(y-s_i)(s_i-x)}{\delta^3}    dy dx = \int_{\Omega^h_{i+1}} \int_{x-\delta}^{s_{i}} \frac{(y-s_{i})(s_{i}-x)}{\delta^3}    dy dx , 
\end{split}
\end{equation}
we get 

\begin{equation}
\begin{aligned}
|  \eqref{eq:I_i_right} + \eqref{eq:I_i+1_left} | &\leq 2 |a_{i+1} -a_{i}| |b_{i+1} - b_i|   \int_{\Omega^h_i} \int_{s_i}^{x+\delta} \frac{|y-s_i||s_i-x|}{\delta^3}    dy dx  \\& \leq C h |a_{i+1} -a_{i}| |b_{i+1} - b_i|. 
\end{aligned}
 \end{equation}
Combining the above results, we have 

\begin{equation}\label{eq:continuous_form}
\begin{aligned}
    D(v^h, w^h) &= \sum_{i=1}^N \int_{\Omega^h_i}  \int_{\mathscr{H}{(x,\delta)}} \gamma(x,y)  (y-x)^2 a_i b_i dydx \\&\phantom{= }+ \sum_{i=0}^N |a_{i+1} -a_{i}| |b_{i+1} - b_i| O(h) . 
\end{aligned}
\end{equation}
 
Now to estimate $ D^h(v^h, w^h) $, we follow the exact procedure for $D(v^h, w^h)$, but with the inner integral replaced by GMLS quadrature. 
In particular, if we have symmetry of the quadrature points, then 

\begin{equation}
\label{eq:symmetery_discrete}
\begin{split}
    & \int_{\Omega^h_i} \sum_{ s_i < y_j < x+\delta} \frac{(y_j -s_i)^2}{\delta^3} \omega_j   dx  = \int_{\Omega^h_{i+1}} \sum_{x-\delta< y_j < s_{i}} \frac{(y_j-s_{i})^2}{\delta^3} \omega_j dx  \\
    &\int_{\Omega^h_i} \sum_{ s_i < y_j < x+\delta}  \frac{(y_j-s_i)(s_i-x)}{\delta^3}  \omega_j   dx = \int_{\Omega^h_{i+1}}  \sum_{x-\delta< y_j < s_{i}} \frac{(y_j-s_{i})(s_{i}-x)}{\delta^3} \omega_j dx .
\end{split}
\end{equation}
Then we can show that 

\begin{equation}\label{eq:discrete_form}
\begin{aligned}
D^h(v^h, w^h) &= \sum_{i=1}^N \int_{\Omega^h_i}  \sum_{j=1}^{NP} \gamma(x,y_j)  (y_j-x)^2 a_i b_i \omega_j dx\\&\phantom{= } +   \sum_{i=0}^N |a_{i+1} -a_{i}| |b_{i+1} - b_i| O(h) .  
\end{aligned}
\end{equation}
Comparing \eqref{eq:continuous_form} with \eqref{eq:discrete_form}, we notice that $\int_{\Omega^h_i}\int_{\mathscr{H}{(x,\delta)}} \gamma(x,y)  (y-x)^2 dy dx= \int_{\Omega^h_i} \sum_{j=1}^{NP} \gamma(x,y_j)  (y_j-x)^2  \omega_j dx$.  We therefore only need an estimate of

\begin{equation}
\label{eq:strang_estimate}
 \sup_{w^h \in \mathcal{V}^h_0} \frac{ \sum_{i=0}^N |a_{i+1} -a_{i}| |b_{i+1} - b_i| O(h) }{\| w^h\|_{\mathcal{V}}}
\end{equation}
with $v^h=I_h u$. 
Notice that $\| w^h\|_{\mathcal{V}} = \sqrt{D(w^h, w^h)}$, so we can write it out by the same procedure above and get 

\begin{equation}
\begin{aligned}
\| w^h\|_{\mathcal{V}}^2 &= \sum_{i=1}^N \int_{\Omega^h_i}  \int_{\mathscr{H}{(x,\delta)}} \gamma(x,y)  (y-x)^2 b^2_i dydx \\ &\phantom{= } -2 \sum_{i=0}^N (b_{i+1} - b_i)^2 \int_{\Omega^h_i} \int_{s_i}^{x+\delta} \frac{(y-s_i)(s_i-x)}{\delta^3}    dy dx.  
\end{aligned}
\end{equation}
By letting $\gamma(x,y) = \frac{1}{\delta^3} 1_{\{ |y-x|<\delta \}}$ and a direct calculation of the above integrals, we get

\begin{equation}
\| w^h\|_{\mathcal{V}}^2  =  \frac{2h}{3}  \sum_{i=1}^N  b_i^2  -\frac{h }{12} \sum_{i=0}^N (b_{i+1} - b_i)^2   = h \left(\frac{2}{3 } \sum_{i=1}^{N}  b_i^2 -  \frac{1}{12} \sum_{i=1}^{N-1} (b_{i+1} - b_i)^2 \right) ,
\end{equation}
where the last equality is a result of $b_0=b_1=b_{N}=b_{N+1} = 0$. Therefore,

\begin{equation}
\begin{split}
& \phantom{= }\;\; \frac{ \sum_{i=0}^N |a_{i+1} -a_{i}| |b_{i+1} - b_i| O(h) }{\| w^h\|_{\mathcal{V}^h_0}} \\ &= \frac{ \sum_{i=1}^{N-1} |a_{i+1} -a_{i}| |b_{i+1} - b_i| O(h) }{  \sqrt{h \left(\frac{2}{3 } \sum_{i=1}^{N}  b_i^2 -  \frac{1}{12} \sum_{i=1}^{N-1} (b_{i+1} - b_i)^2\right)} }  \\
&\leq O(h) \frac{  \sqrt{ \sum_{i=1}^{N-1}|a_{i+1} -a_{i}|^2 } \sqrt{\sum_{i=1}^{N-1} |b_{i+1} - b_i|^2} }{  \sqrt{h \left(\frac{2}{3 } \sum_{i=1}^{N}  b_i^2 -  \frac{1}{12} \sum_{i=1}^{N-1} (b_{i+1} - b_i)^2\right)} }  \\
& = O(h) \frac{  \sqrt{ \sum_{i=1}^{N-1}|a_{i+1} -a_{i}|^2 }}{  \sqrt{h \left( \frac{2}{3 }  (\sum_{i=1}^{N}  b_i^2)/ (\sum_{i=1}^{N-1} (b_{i+1} - b_i)^2) -  \frac{1}{12} \right)} }  . 
\end{split}
\end{equation}
Notice that $\sum_{i=1}^{N-1} (b_{i+1} - b_i)^2 = 2 \sum_{i=1}^N b_i^2 - 2\sum_{i=1}^{N-1} b_{i+1}b_{i} \leq 4 \sum_{i=1}^N b_i^2 $, therefore,

\begin{equation}
\begin{aligned}
\frac{ \sum_{i=0}^N |a_{i+1} -a_{i}| |b_{i+1} - b_i| O(h) }{\| w^h\|_{\mathcal{V}}} & \leq  O(h) \frac{  \sqrt{ \sum_{i=1}^{N-1}|a_{i+1} -a_{i}|^2 }}{  \sqrt{h \left( \frac{8}{3 } -  \frac{1}{12} \right)} } \\&  = O(h) \sqrt{\frac{  \sum_{i=1}^{N-1}|a_{i+1} -a_{i}|^2 }{ h}}. 
\end{aligned}
\end{equation}
  Since $v^h $ is the piecewise linear interpolation of $u$, then $a_i = u^\prime(x_i)$ for some $x_i \in \Omega^h_i$, so

\begin{equation}
     |a_{i+1} - a_{i}| = |u^\prime(x_{i+1}) - u^\prime(x_i)|= \left|\int_{x_i}^{x_{i+1}} u^{\prime\prime}(s)ds\right| \leq  h \int_{x_i}^{x_{i+1}} |u^{\prime\prime}(s)|ds, 
\end{equation}
where the last inequality comes from Cauchy-Schwartz inequality. 
Therefore we have $\sqrt{\frac{  \sum_{i=1}^{N-1}|a_{i+1} -a_{i}|^2 }{ h} }\leq \| u\|_{H^2}$.  
All together, we have shown 

\begin{equation}
    \sup_{w^h \in \mathcal{V}^h_0} \frac{|D(v^h, w^h) - D^h(v^h, w^h)|}{\| w^h\|_{\mathcal{V}}} \leq C h \| u\|_{H^2}  
\end{equation}
for $v^h = I_h u$, 
and therefore the desired result. 
\end{proof}

\begin{rem}\label{remark_rate}
The proof of Theorem \ref{thm:conv} utilizes the structure of the uniform grid. In particular, \eqref{eq:symmetery_continuous} and \eqref{eq:symmetery_discrete} hold only if we have a uniform grid. For quasi-uniform grids, i.e., non-uniform grids with bounded ratio between the maximum mesh size $h_{\max}$ and the miniumum mesh size $h_{\min}$, we can follow the similar arguments so that \eqref{eq:strang_estimate} is then replaced with \[
 \sup_{w^h \in \mathcal{V}^h_0} \frac{  \left( \sqrt{\sum_{i=0}^N |a_{i}|^2}\sqrt{\sum_{i=0}^N |b_i|^2}  \right) O(h_{\max}) }{\| w^h\|_{\mathcal{V}}}
\]
from where one can proceed to show an $O(1)$ estimate of $\| u- u^h \|_{H^1}$.
This estimate will also be numerically verified later. 
\end{rem}

\section{Numerical examples}\label{sec:numerics}

In this section, we present numerical convergence results obtained by employing the proposed quadrature scheme. We consider one-dimensional and two-dimensional problems discretized on uniform and non-uniform grids.

To evaluate the accuracy of the numerical solutions and test the convergence properties of the proposed method, we employ the $L^2$ and $H^1$ norms of the difference between the nonlocal numerical solution, $u^h$, and the analytical solution, $u_0$, to a local Poisson problem, i.e.,

\begin{equation}
    \lVert u^h(\mathbf{x})-u_0(\mathbf{x})\rVert_{L^2}=\left[\int_{\Omega}\left(u^h(\mathbf{x})-u_0(\mathbf{x})\right)^2d\mathbf{x}\right]^{\frac{1}{2}},
\end{equation}
and

\begin{equation}
    \lVert u^h(\mathbf{x})-u_0(\mathbf{x})\rVert_{H^1}=\left[\int_{\Omega}\left(u^h(\mathbf{x})-u_0(\mathbf{x})\right)^2+\left(\nabla u^h(\mathbf{x})-\nabla u_0(\mathbf{x})\right)^2d\mathbf{x}\right]^{\frac{1}{2}}.
\end{equation}
These norms are computed numerically with Gauss quadrature over the mesh elements, i.e.

\begin{equation}
    \lVert u^h(\mathbf{x})-u_0(\mathbf{x})\rVert_{L^2}\approx\left[\sum_{\Omega^h_e\in\mathcal{M}^h_\Omega}\sum_{\mathbf{x}^e_{gs}\in\Omega^h_e}\left(u^h(\mathbf{x}_{gs})-u_0(\mathbf{x}_{gs})\right)^2\omega_{gs}\right]^{\frac{1}{2}},
\end{equation}
and

\begin{equation}
    \begin{aligned}
     \lVert u^h(\mathbf{x})-u_0(\mathbf{x})\rVert_{H^1}\approx&\Bigg\{\sum_{\Omega^h_e\in\mathcal{M}^h_\Omega}\sum_{\mathbf{x}^e_{gs}\in\Omega^h_e}\left[\left(u^h(\mathbf{x}_{gs})-u_0(\mathbf{x}_{gs})\right)^2\right.\\&+\left.\left(\nabla u^h(\mathbf{x}_{gs})-\nabla u_0(\mathbf{x}_{gs})\right)^2\right]\omega_{gs}\Bigg\}^{\frac{1}{2}},
    \end{aligned}
\end{equation}
where $\left\{\mathbf{x}_{gs}\right\}_{gs=1}^{N_{gs}}$ and $\left\{\omega_{gs}\right\}_{gs=1}^{N_{gs}}$, ${N_{gs}}\in\mathbb{N}$, are the element Gauss quadrature points and weights, respectively. In this work, we take $N_{gs}=8^d$, where $d$ is the dimension of the problem. Also, in all our numerical examples, we employ fixed ratios $m=\delta/h\in\mathbb{N}$.

%%%%%%%%%%%%%%%%%%%%%%%%%%%%%%%%%%%%%%%%%%%
\subsection{One-dimensional test cases}\label{numerical_1D}

\begin{figure} [H]
\centering
\vspace{0pt}  
\includegraphics[trim = 0mm 0mm 0mm -10mm, clip=true,width=0.65\textwidth]{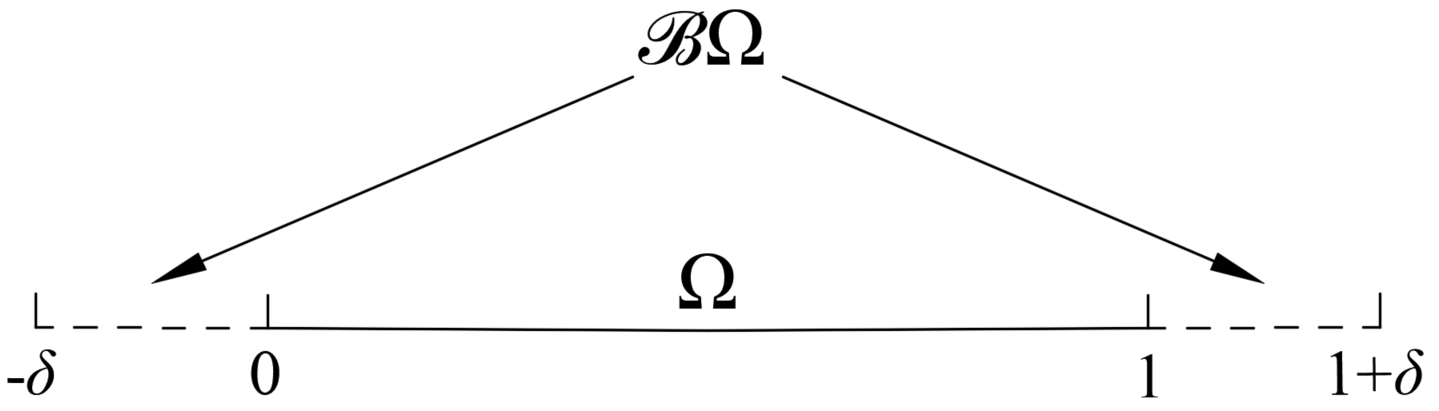}
\caption{One-dimensional domain $\Omega$, with associated boundary layer $\mathscr{B}\Omega$.}
\label{PD_1Ddomain_figure}
\end{figure}

We consider a one-dimensional domain $\Omega=(0,1)$. For a given horizon $\delta$, its associated interaction domain is $\mathscr{B}\Omega=[-\delta,0]\cup[1,\delta]$. Note that the inner domain, where the function $u$ is unknown (see Eq. (\ref{strong form_NLD.})), is considered constant in size, while the boundary layer varies with the value of $\delta$. Thus, during our convergence studies, the inner solution domain $\Omega$ remains consistent during the refinement ($\delta\rightarrow 0$) so that the $L^2$ error norms associated with each considered value of $\delta$ are comparable. We consider the following kernel functions: the constant kernel 

\begin{equation}
    \gamma_{1,c}({x},{y}) = \left\{\begin{aligned}
         \ \frac{3}{2\delta^3} \quad\ &\rm{for}\ |{y}-{x}|\leq\delta,\\\
        \ 0 \ \ \quad\ &\rm{for}\ |{y}-{x}|>\delta,\\
\end{aligned}\right.   
\label{kernel_form1_1D}
\end{equation}
and the rational kernel

\begin{equation}
    \gamma_{1,r}({x},{y}) =\left\{\begin{aligned}
         \ \frac{1}{\delta^2 |{y}-{x}|} \quad \ &\rm{for}\ |{y}-{x}|\leq\delta,\\\
        \ 0 \ \ \quad\ &\rm{for}\ |{y}-{x}|>\delta,\\
\end{aligned}\right.  
\label{kernel_form2_1D}
\end{equation}
which correspond to the expressions in Eqs. (\ref{kernel_form1}) and (\ref{kernel_form2}) for $\zeta=3/2$ and $\zeta=1$, respectively. These values of $\zeta$ are such that

\begin{equation}
\lim_{\delta\to0}\mathcal{L}_{\delta}u({x})=\Delta u(x),
\label{limit_for1D_operator}
\end{equation}
where $\Delta$ is the local Laplace operator. 
To illustrate the numerical convergence of the proposed method, we consider manufactured solutions, i.e. we choose analytical solutions, $u_0(x)$, to the local Poisson equation and compute the corresponding forcing term $b(x)$ and Dirichlet volume constraint $g(x)$. These are then used for the nonlocal Poisson problem (\ref{strong form_NLD.}). Specifically, we consider two cases: a sinusoidal and a linear solution (with the purpose of performing the so-called patch test). Therefore, for the first case we set $u_0(x)=\sin(2\pi x)$, for which $g(x)=\sin(2\pi x)$ and 

\begin{equation}
\begin{aligned}
    b(x) &= -\Delta u_0(x)= -\Delta \sin{(2\pi x)}= 4\pi^2 \sin{(2\pi x)}.
\end{aligned}
\end{equation}
For the second case, instead, we have $u_0(x)=x$, $g(x)=x$ and 

\begin{equation}
\begin{aligned}
    b(x) &= 0.
\end{aligned}
\end{equation}

\subsubsection{Uniform discretizations}\label{numerical_1D_uniform}
We investigate the convergence behavior for uniform discretizations. The finite element mesh has a uniform discretization size, $h$, over $[-\delta,1+\delta]=\left([-\delta,0]\cup[1,\delta]\right)\cup(0,1)$. Recall that we consider cases for which $m=\delta/h\in\mathbb{N}$, meaning that elements of size $h$ subdivide $(0,1)$ and $[-\delta,1+\delta]$ exactly. The same applies when the domain extension $[-\delta-t_e,1+\delta+t_e]$, with $t_e=\delta$, is employed. For the outer quadrature, we consider $N_q=40$ Gauss points, while for inner quadrature we use $\overline{N}_{qp}=10$.

For the sinusoidal solution we use $\gamma_{1,r}$, $h=0.01$, and $m=2$, meaning that $\Omega$ is discretized using 100 elements and $\delta=0.02$. For the construction of the inner quadrature weights, we consider two cases: one without domain extension, i.e., $t_e=0$, and one with domain extension $t_e=\delta$. The obtained numerical solutions are reported in Figure \ref{d_m2_IF2_bc2bc3}, while Figure \ref{absError_m2_IF2_bc2bc3} shows the absolute error obtained for the two considered cases. We observe that, when $t_e=0$, the error concentrates near the boundary of the domain, whereas this does not occur for $t_e=\delta$. Next, we perform an $L^2$ norm convergence study by varying $h$ and $\delta$, with fixed ratio $m=2$. As shown in Figure \ref{1D_CG_U_sine_extension_noextension_convergence}, for $t_e=0$, we observe a linear convergence, whereas, for $t_e=\delta$, the rate is quadratic. This suggests that the concentration of error near the boundary observed for $t_e=0$ reduces the overall convergence rate. Therefore, from now on, we only employ $t_e=\delta$ in the construction of the inner quadrature weights.

\begin{figure}[H] 
\begin{center}
\subfigure[Numerical and exact solutions]{\includegraphics[trim = 5mm 60mm 15mm 60mm, clip=true,width=0.49\textwidth]{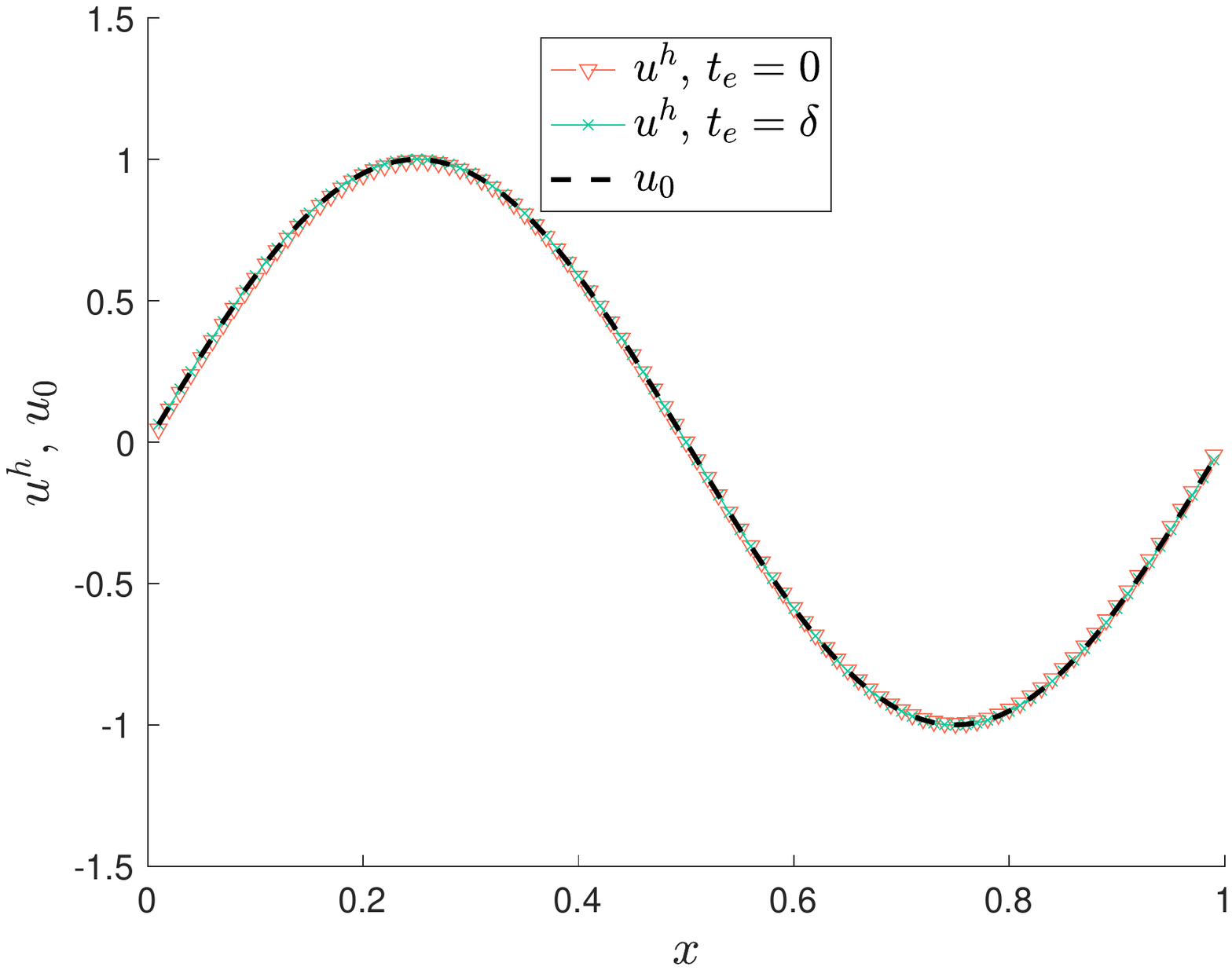}\label{d_m2_IF2_bc2bc3}} 
\subfigure[Absolute error]{\includegraphics[trim = 5mm 60mm 15mm 60mm, clip=true,width=0.49\textwidth]{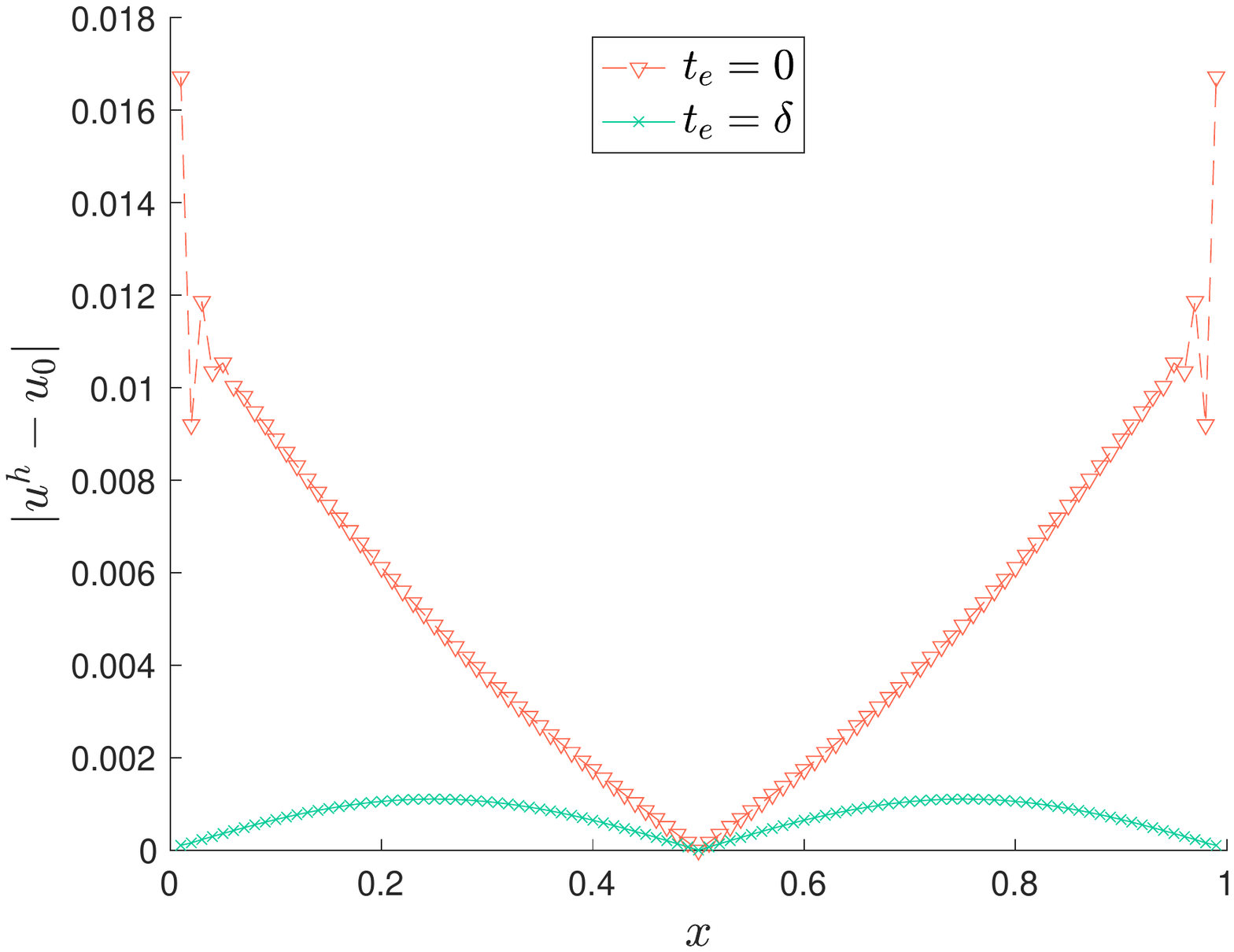}\label{absError_m2_IF2_bc2bc3}} 
\caption{Numerical solution and associated absolute error for the one-dimensional problem with sinusoidal solution for $\gamma_{1,r}$, $m=2$, ${N}_{q}=40$, and $\overline{N}_{qp}=10$. A uniform element size $h=0.01$, corresponding to 100 elements for the discretization of $\Omega$ is employed.}
\label{1D_CG_U_sine_extension_noextension_solutionanderror}
\end{center}
\end{figure}

\begin{figure}[H] 
\begin{center}
\subfigure[$\gamma_{1,c}$]{\includegraphics[trim = 15mm 60mm 15mm 60mm, clip=true,width=0.49\textwidth]{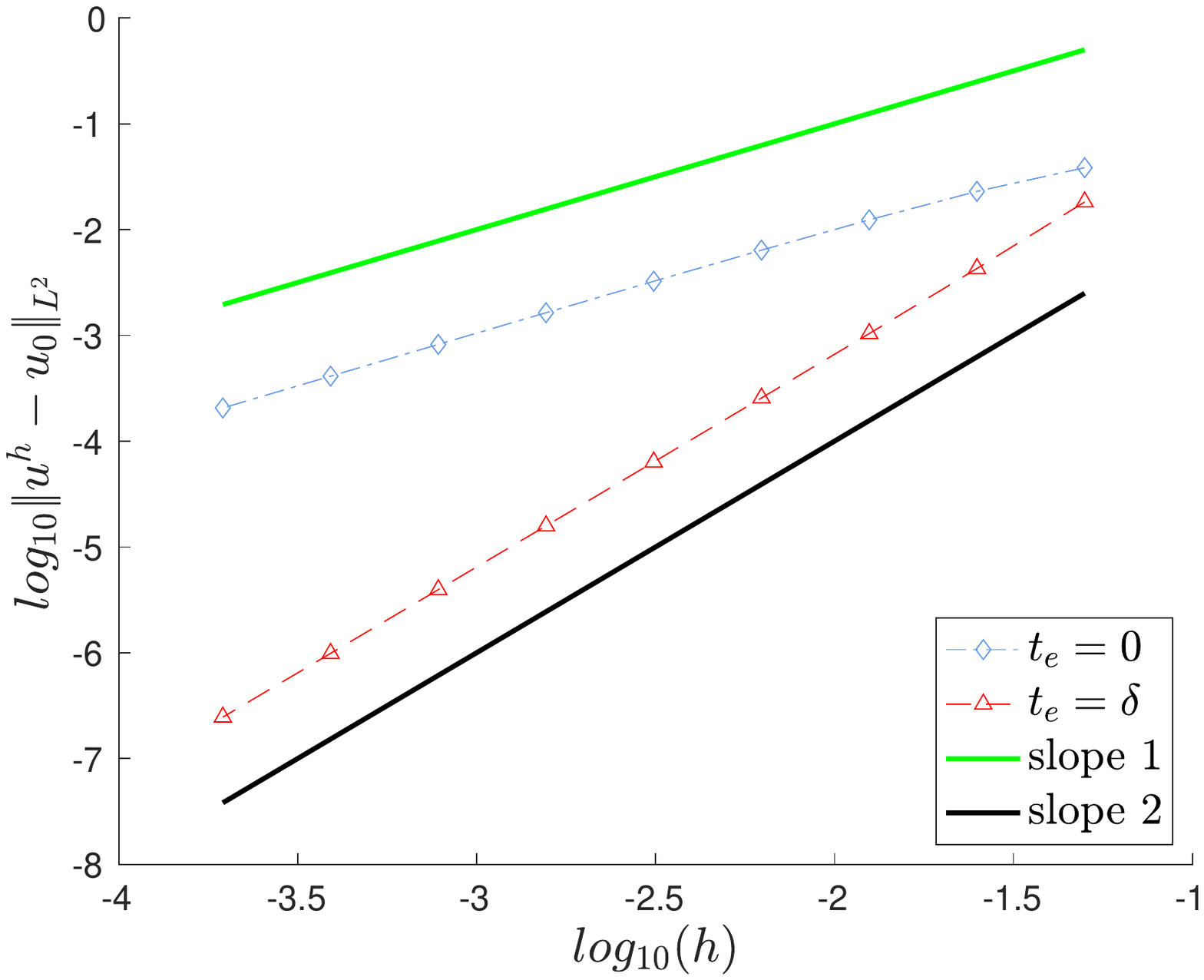}\label{1D_CG_U_OuterGauss_InnerMethod2_sine_f3_bx1_1del_2del_m2_IF3_No40_Ni10}} 
\subfigure[$\gamma_{1,r}$]{\includegraphics[trim = 15mm 60mm 15mm 60mm, clip=true,width=0.49\textwidth]{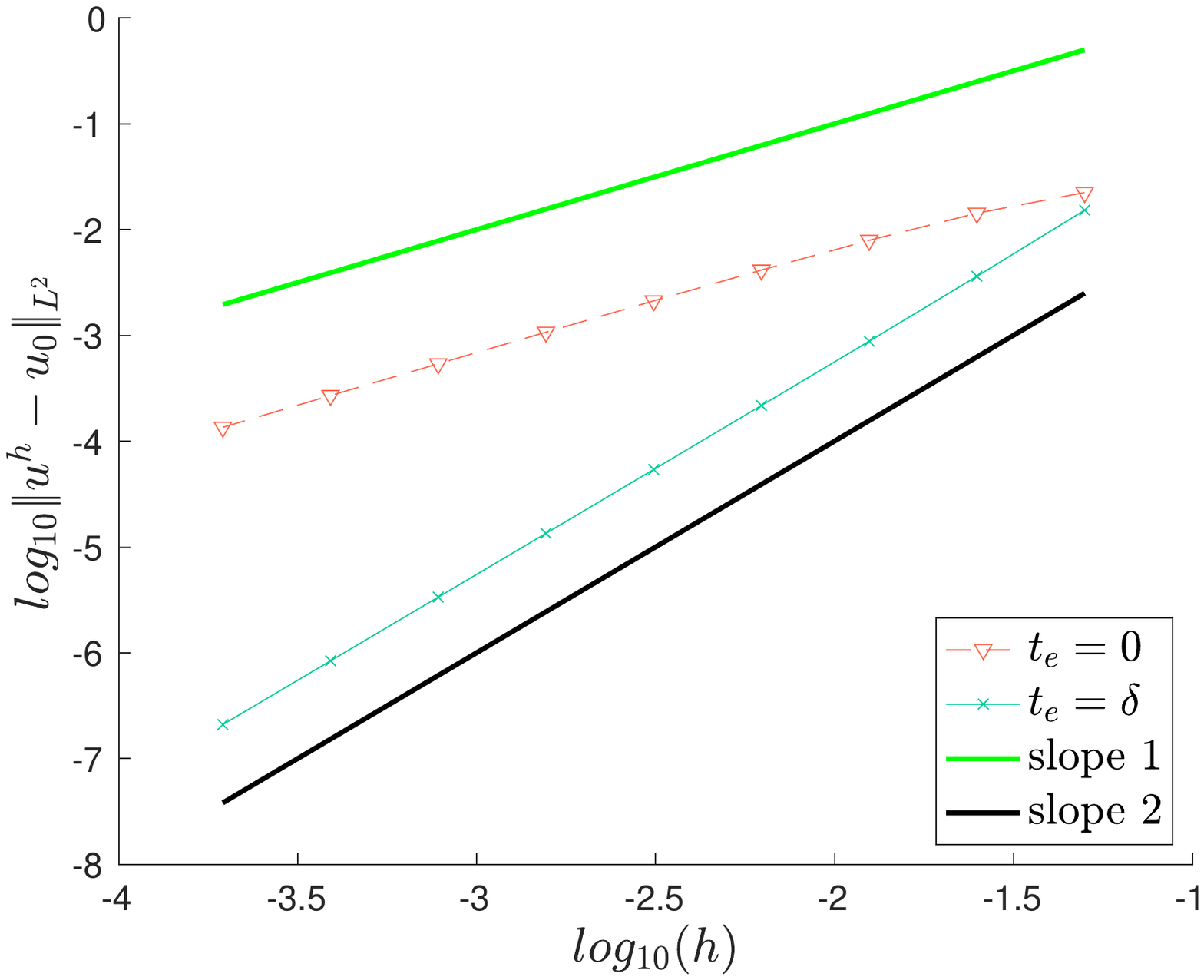}\label{1D_CG_U_OuterGauss_InnerMethod2_sine_f3_bx1_1del_2del_m2_IF2_No40_Ni10}} 
\caption{$L^2$ norm convergence behaviors of the one-dimensional numerical solutions  for the case with sinusoidal solution. $m=2$, uniform discretization, and $t_e=0$ and $t_e=\delta$. ${N}_{q}=40$ and $\overline{N}_{qp}=10$ are employed. $\gamma_{1,c}$ and $\gamma_{1,r}$ are both considered.}
\label{1D_CG_U_sine_extension_noextension_convergence}
\end{center}
\end{figure}

\noindent
Figures \ref{1D_CG_U_oGauss_iGMLS_sine_f3_bx1_2del_m123_IF3_No40_Ni10} and \ref{1D_CG_U_oGauss_iGMLS_sine_f3_bx1_2del_m123_IF2_No40_Ni10} show the $L^2$ norm convergence behavior for $\gamma_{1,c}$ and $\gamma_{1,r}$, respectively. For both cases, we employ $t_e=\delta$. ${N}_{q}=40$, and $\overline{N}_{qp}=10$. For all considered values of $m$ (i.e., $m=1,2,3$), we observe a second-order convergence rate in the $L^2$ norm. The convergence behavior in the $H^1$ norm is presented in Figure \ref{1D_CG_U_sine_m123_H1}. For all of the considered cases, a first-order convergence is obtained, which is consistent with the theoretical prediction from Section \ref{sec:convergence}. It can also be noted that convergence in the $H^1$ norm is one rate lower than in $L^2$.

\begin{figure}[H] 
\begin{center}
\subfigure[$\gamma_{1,c}$]{\includegraphics[trim = 15mm 60mm 15mm 60mm, clip=true,width=0.49\textwidth]{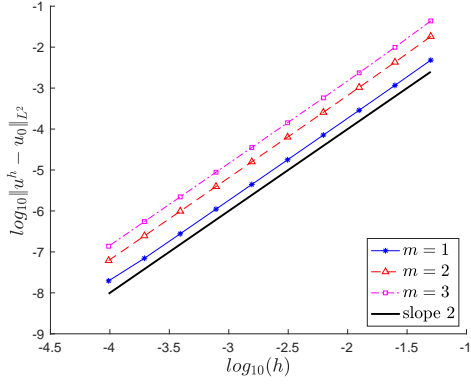}\label{1D_CG_U_oGauss_iGMLS_sine_f3_bx1_2del_m123_IF3_No40_Ni10}} 
\subfigure[$\gamma_{1,r}$]{\includegraphics[trim = 15mm 60mm 15mm 60mm, clip=true,width=0.49\textwidth]{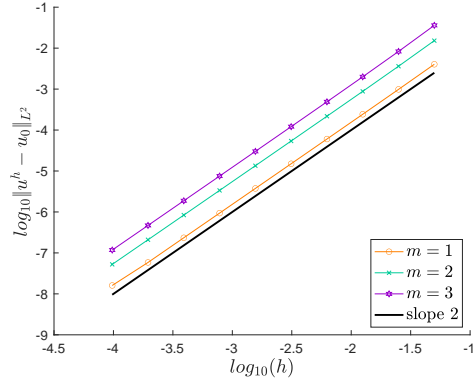}\label{1D_CG_U_oGauss_iGMLS_sine_f3_bx1_2del_m123_IF2_No40_Ni10}} 
\caption{$L^2$ norm convergence behaviors of the one-dimensional numerical solutions for the case with sinusoidal solution. $m=1,2,3$, uniform discretization, and $t_e=\delta$. ${N}_{q}=40$ and $\overline{N}_{qp}=10$ are employed. $\gamma_{1,c}$ and $\gamma_{1,r}$ are both considered.}
\label{1D_CG_U_sine_m123}
\end{center}
\end{figure}

\begin{figure}[H] 
\begin{center}
\subfigure[$\gamma_{1,c}$]{\includegraphics[trim = 15mm 60mm 15mm 60mm, clip=true,width=0.49\textwidth]{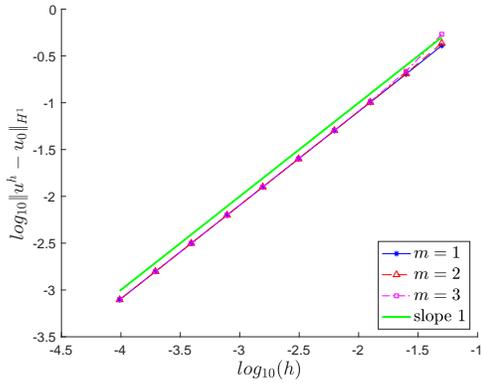}\label{1D_CG_U_oGauss_iGMLS_sine_f3_bx1_2del_m123_IF3_No40_Ni10_H1}} 
\subfigure[$\gamma_{1,r}$]{\includegraphics[trim = 15mm 60mm 15mm 60mm, clip=true,width=0.49\textwidth]{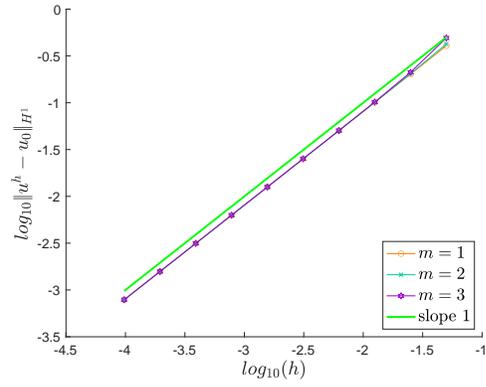}\label{1D_CG_U_oGauss_iGMLS_sine_f3_bx1_2del_m123_IF2_No40_Ni10_H1}} 
\caption{$H^1$ norm convergence behaviors of the one-dimensional numerical solutions for the case with sinusoidal solution. $m=1,2,3$, uniform discretization, and $t_e=\delta$. ${N}_{q}=40$ and $\overline{N}_{qp}=10$ are employed. $\gamma_{1,c}$ and $\gamma_{1,r}$ are both considered.}
\label{1D_CG_U_sine_m123_H1}
\end{center}
\end{figure}

Next, we consider the case with a linear solution. For the reasons illustrated above, we consider $t_e=\delta$; we set $h=0.01$ and $m=2$, meaning that $\Omega$ is discretized using 100 elements and $\delta=0.02$. The $L^2$ norms of the error for the cases with $\gamma_{1,r}$ and $\gamma_{1,c}$ are $1.59$E$-13$ and $6.96$E$-14$, respectively. This fact implies that the proposed approach passes the patch test for uniform discretizations, i.e. the numerical solution is accurate up to machine precision for linear solutions. This is expected since the exact, local solution belongs to the discretization space $\mathcal V^h$.

\subsubsection{Nonuniform discretizations}\label{numerical_1D_nonuniform}

Next, we investigate the performance of the proposed method for non-uniform discretizations. The non-uniform discretizations are constructed by perturbing uniform discretizations of size $h$. This is achieved by moving each finite element node in $(0,1)$ and $(-\delta,0)\cup(1,1+\delta)$ from their original position $x^{u}$ to a new randomly selected position $x^{nu}=x^u+\epsilon h R_a$, where $\epsilon$ is a chosen perturbation factor and $R_a$ is a random number in $\left[-1,1\right]$.

As for uniform discretizations, we first consider the sinusoidal solution. We select $t_e=\delta$, ${N}_{q}=40$, and $\overline{N}_{qp}=10$. Figures \ref{1D_CG_NU_sine_m23} and \ref{1D_CG_NU_sine_m23_H1} show the convergence behavior for $m=2,3$ for both $\gamma_{1,c}$ and $\gamma_{1,r}$, in the $L^2$ and $H^1$ norms, respectively. We observe an apparent second-order convergence rate in the $L^2$, and first-order for the $H^1$ norm, i.e., one rate lower. However, it should be noted that Figure \ref{1D_CG_NU_sine_m23_H1} shows a reduction in the $H^1$ convergence rate for the finer cases, suggesting that, asymptotically, the convergence rate may reach a zeroth-order convergence, as discussed in Remark \ref{remark_rate}.

\begin{figure}[H] 
\begin{center}
\subfigure[$\gamma_{1,c}$]{\includegraphics[trim = 15mm 60mm 15mm 60mm, clip=true,width=0.49\textwidth]{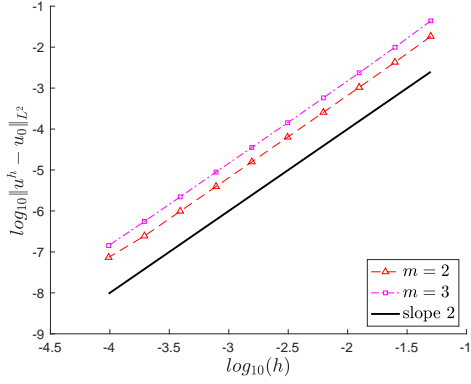}\label{1D_CG_NU_OuterGauss_InnerMethod2_sine_f3_bx1_2del_m2_m3_IF3_No40_Ni10_unih}} 
\subfigure[$\gamma_{1,r}$]{\includegraphics[trim = 15mm 60mm 15mm 60mm, clip=true,width=0.49\textwidth]{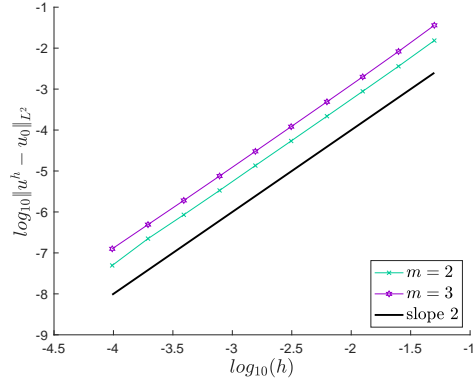}\label{1D_CG_NU_OuterGauss_InnerMethod2_sine_f3_bx1_2del_m2_m3_IF2_No40_Ni10_unih}} 
\caption{$L^2$ convergence behaviors of the one-dimensional numerical solutions for the case with sinusoidal solution. $m=2,3$, non-uniform discretization with $\epsilon=0.1$, and $t_e=\delta$. ${N}_{q}=40$ and $\overline{N}_{qp}=10$ are employed. $\gamma_{1,c}$ and $\gamma_{1,r}$ are both considered.}
\label{1D_CG_NU_sine_m23}
\end{center}
\end{figure}

\begin{figure}[H] 
\begin{center}
\subfigure[$\gamma_{1,c}$]{\includegraphics[trim = 15mm 60mm 15mm 60mm, clip=true,width=0.49\textwidth]{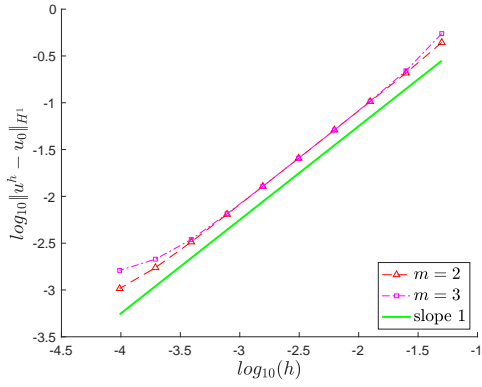}\label{1D_CG_NU_OuterGauss_InnerMethod2_sine_f3_bx1_2del_m2_m3_IF3_No40_Ni10_unih_H1}} 
\subfigure[$\gamma_{1,r}$]{\includegraphics[trim = 15mm 60mm 15mm 60mm, clip=true,width=0.49\textwidth]{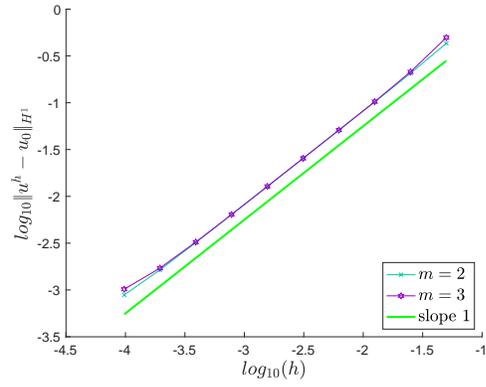}\label{1D_CG_NU_OuterGauss_InnerMethod2_sine_f3_bx1_2del_m2_m3_IF2_No40_Ni10_unih_H1}} 
\caption{$H^1$ norm convergence behaviors of the one-dimensional numerical solutions for the case with sinusoidal solution. $m=2,3$, non-uniform discretization with $\epsilon=0.1$, and $t_e=\delta$. ${N}_{q}=40$ and $\overline{N}_{qp}=10$ are employed. $\gamma_{1,c}$ and $\gamma_{1,r}$ are both considered.}
\label{1D_CG_NU_sine_m23_H1}
\end{center}
\end{figure}

We then consider the linear solution. As before, we take $t_e=\delta$, ${N}_{q}=40$, and $\overline{N}_{qp}=10$.
In contrast to the uniform case, for non-uniform discretizations, the proposed method does not pass the patch test. As shown in Figures \ref{1D_CG_NU_linear_m23} and \ref{1D_CG_NU_linear_m23_H1}, which report convergence behavior in the $L^2$ and $H^1$ norms, respectively, for $m=2,3$ for $\gamma_{1,c}$ and $\gamma_{1,r}$, the method shows a first-order $L^2$ norm convergence and a zeroth-order $H^1$ norm convergence (see Remark \ref{remark_rate}). By comparing Figure \ref{1D_CG_NU_sine_m23} and Figure \ref{1D_CG_NU_linear_m23}, it can be noted that the magnitude of the $L^2$ norm errors obtained for the case with a linear solution is much smaller compared with the magnitude obtained for the problem with a sinusoidal solution. 
This confirms our conjecture that the method has a first-order asymptotic convergence in the $L^2$ norm, and that second-order convergence is observed in the pre-asymptotic regime.  

A natural question is whether further refinement of the sinusoidal case would show a reduction in convergence rate in the $L^2$ norm; we note that attempting to refine the sinusoidal case further, the error becomes dominated by floating point arithmetic.  %Our justification for this claim is the fact that, beyond the highest level of refinement shown, error in {\em uniform} discretizations begins to deviate from the symmetry implied by the problem data and discretization. 
Nonetheless, since, as shown in Figure \ref{1D_CG_NU_sine_m23_H1}, the convergence rate in the $H^1$ norm starts to reduce for the finer refinements, and we have observed one-order lower convergence in the $L^2$ norm, it is reasonable to expect that the convergence rate in the $L^2$ norm would reduce with further refinement.

\begin{figure}[H] 
\begin{center}
\subfigure[$\gamma_{1,c}$]{\includegraphics[trim = 15mm 60mm 15mm 60mm, clip=true,width=0.49\textwidth]{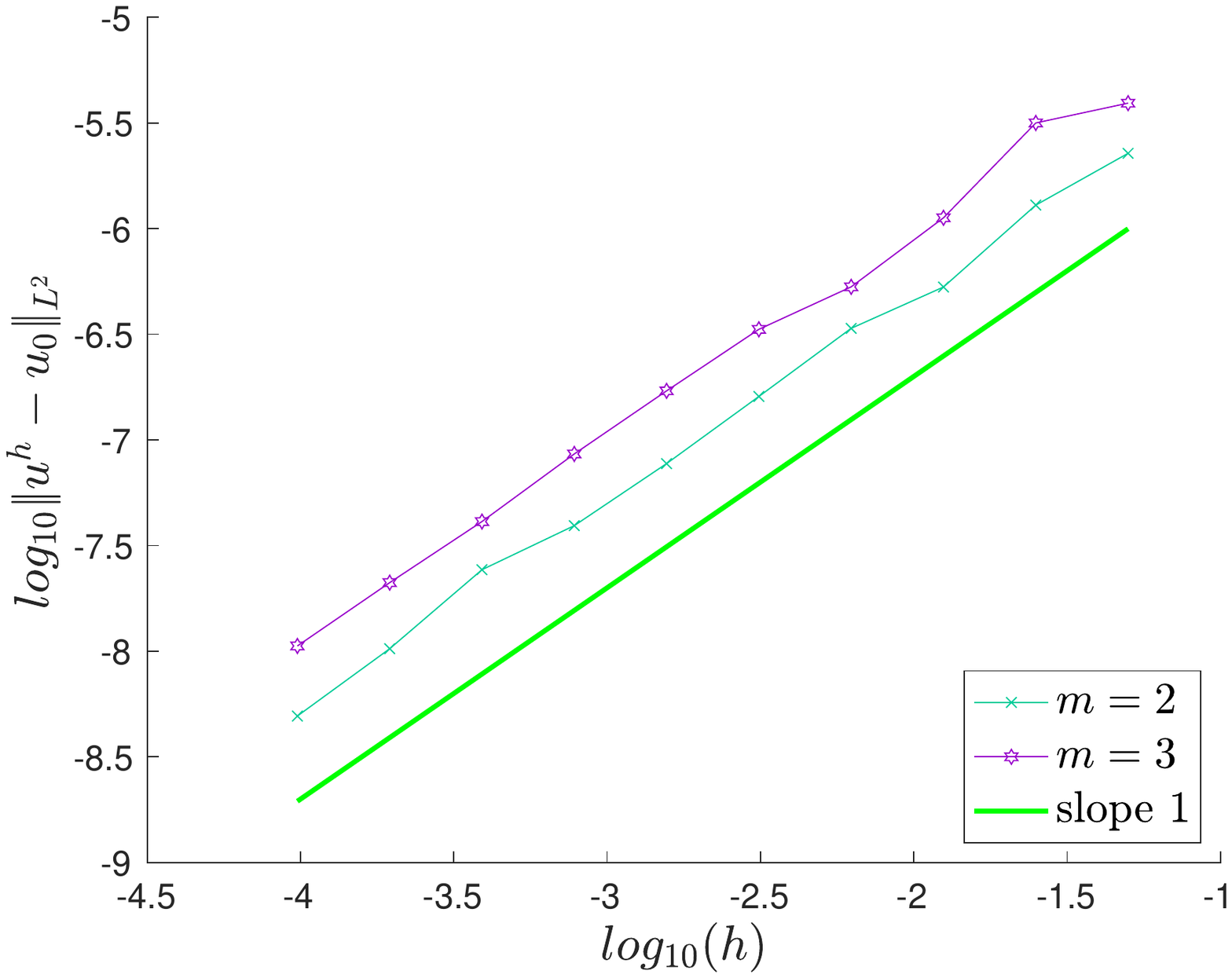}\label{1D_CG_NU_oGauss_iGMLS_linear_f3_bx1_2del_m2_m3_IF3_No40_Ni10}} 
\subfigure[$\gamma_{1,r}$]{\includegraphics[trim = 15mm 60mm 15mm 60mm, clip=true,width=0.49\textwidth]{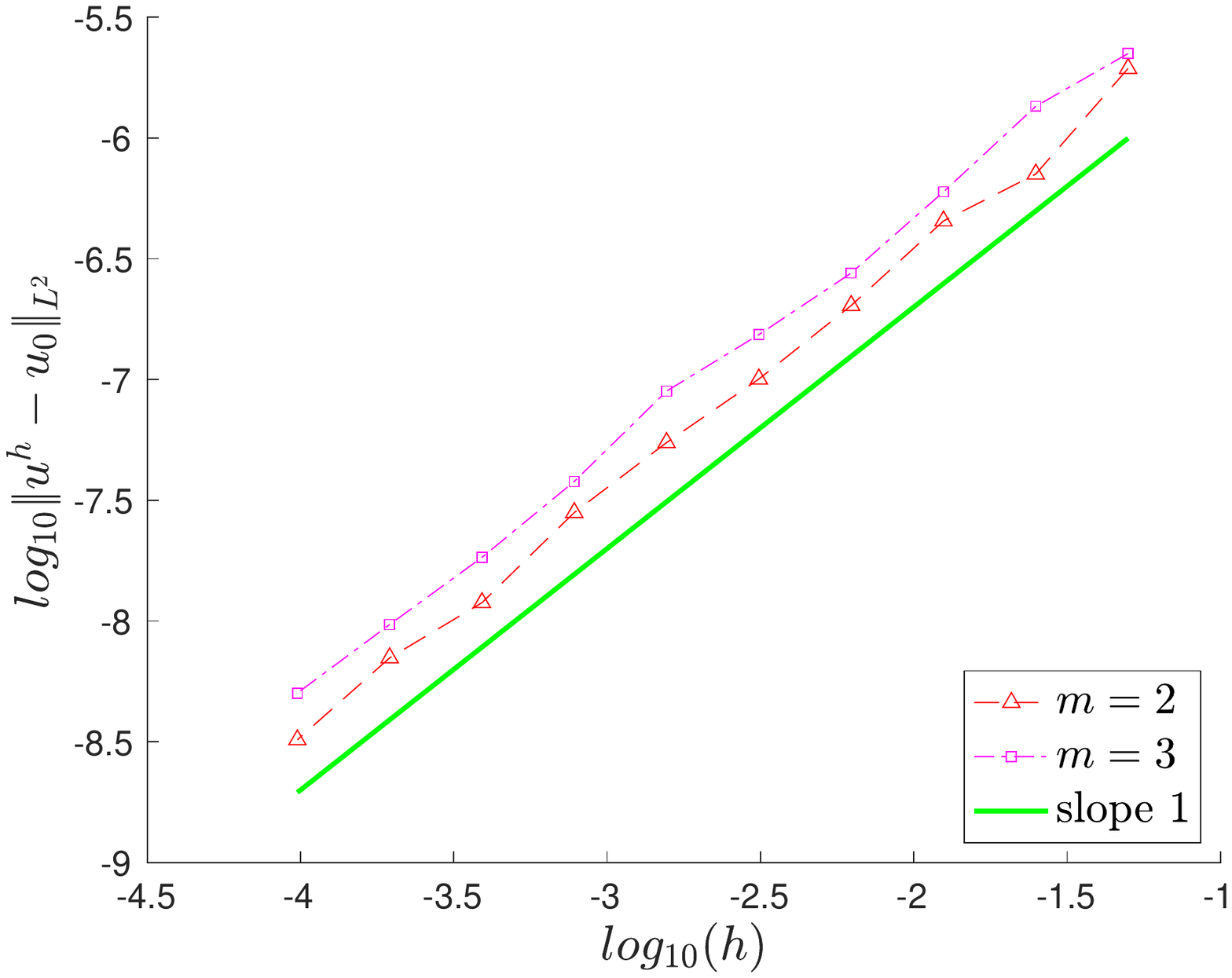}\label{1D_CG_NU_oGauss_iGMLS_linear_f3_bx1_2del_m2_m3_IF2_No40_Ni10}} 
\caption{$L^2$ norm convergence behaviors of the one-dimensional numerical solutions for the case with linear solution. $m=2,3$, non-uniform discretization with $\epsilon=0.1$, and $t_e=\delta$. ${N}_{q}=40$ and $\overline{N}_{qp}=10$ are employed. $\gamma_{1,c}$ and $\gamma_{1,r}$ are both considered.}
\label{1D_CG_NU_linear_m23}
\end{center}
\end{figure}

\begin{figure}[H] 
\begin{center}
\subfigure[$\gamma_{1,c}$]{\includegraphics[trim = 15mm 60mm 15mm 60mm, clip=true,width=0.49\textwidth]{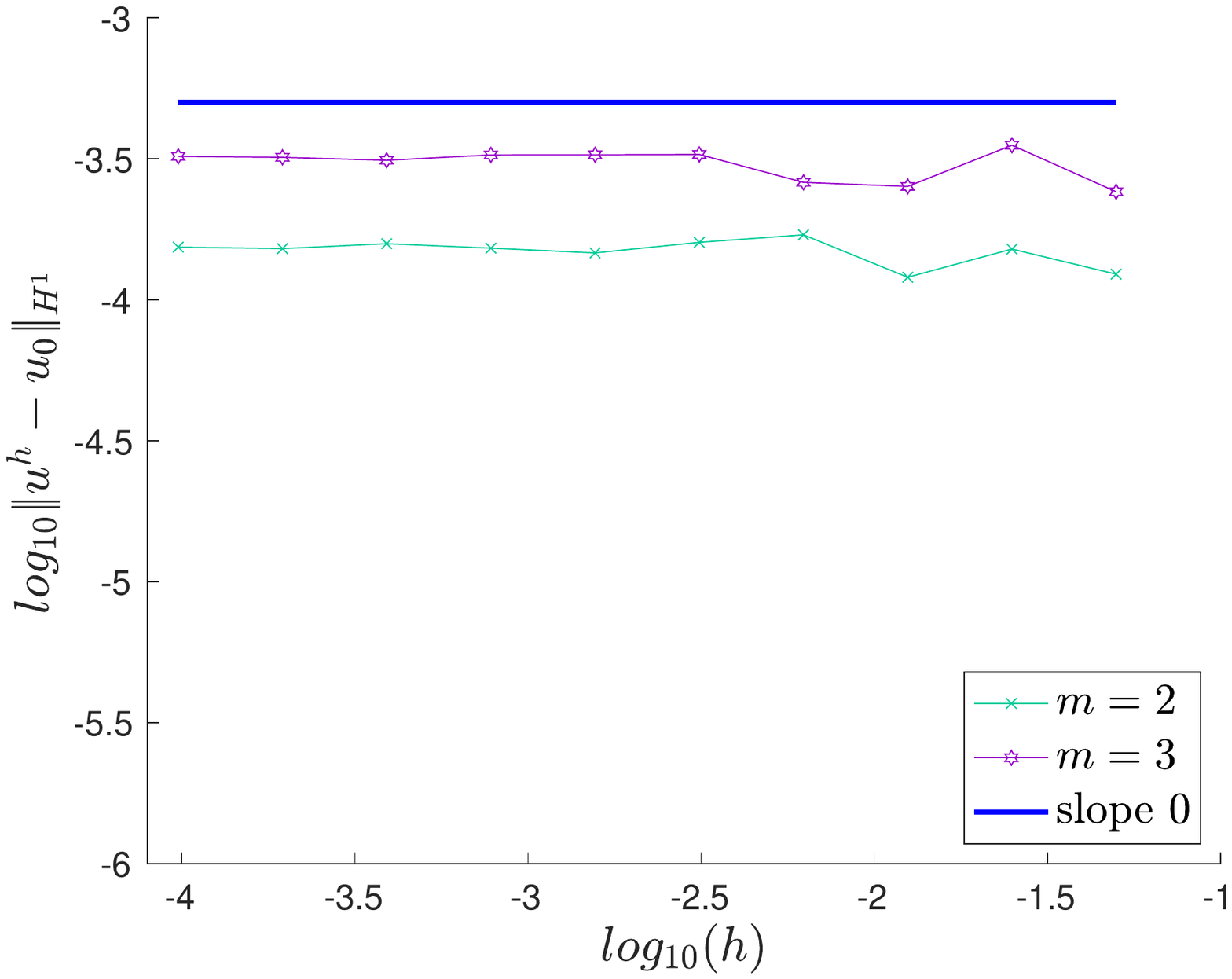}\label{1D_CG_NU_oGauss_iGMLS_linear_f3_bx1_2del_m2_m3_IF3_No40_Ni10_H1}} 
\subfigure[$\gamma_{1,r}$]{\includegraphics[trim = 15mm 60mm 15mm 60mm, clip=true,width=0.49\textwidth]{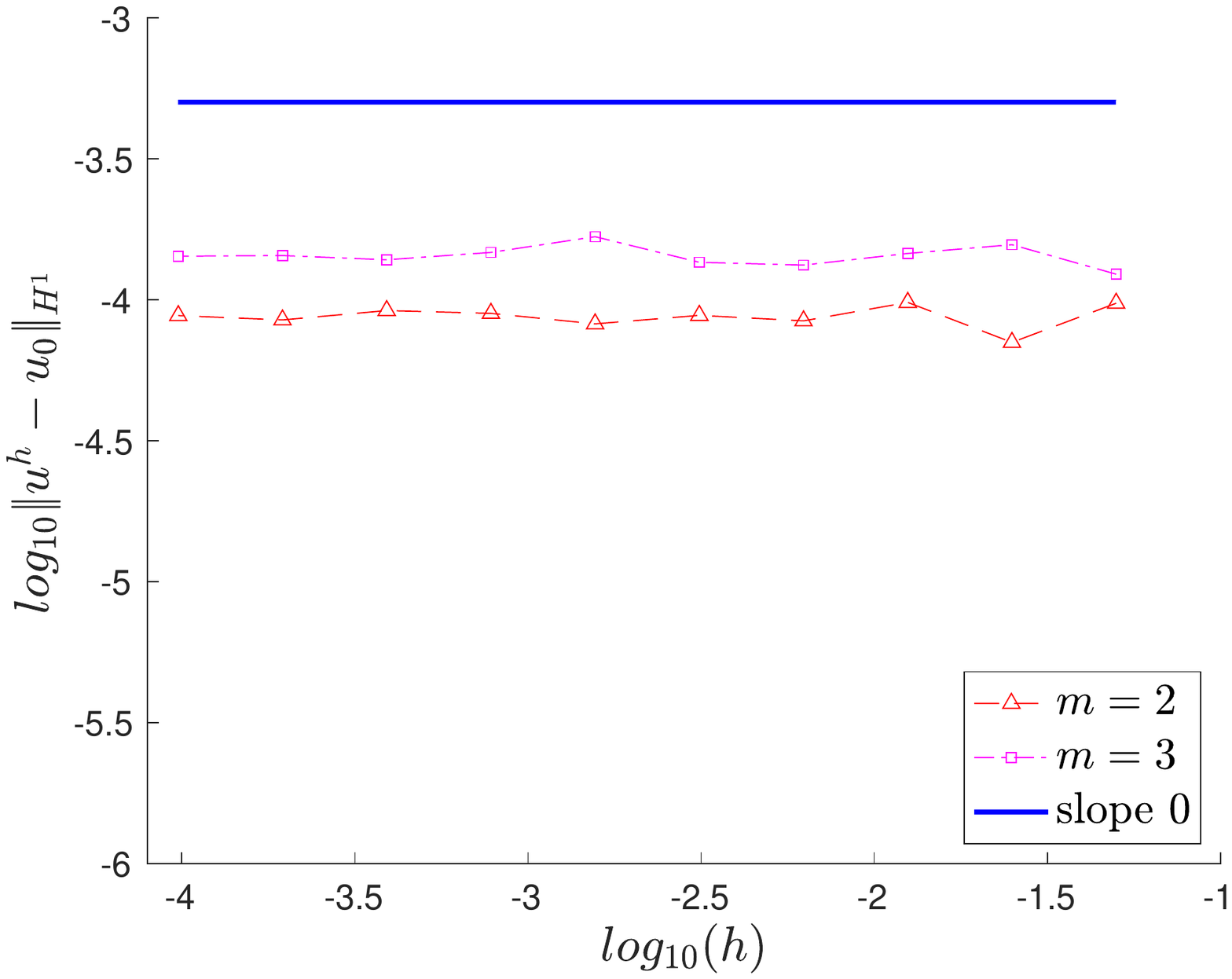}\label{1D_CG_NU_oGauss_iGMLS_linear_f3_bx1_2del_m2_m3_IF2_No40_Ni10_H1}} 
\caption{$H^1$ norm convergence behaviors of the one-dimensional numerical solutions for the case with linear solution. $m=2,3$, non-uniform discretization with $\epsilon=0.1$, and $t_e=\delta$. ${N}_{q}=40$ and $\overline{N}_{qp}=10$ are employed. $\gamma_{1,c}$ and $\gamma_{1,r}$ are both considered.}
\label{1D_CG_NU_linear_m23_H1}
\end{center}
\end{figure}

%%%%%%%%%%%%%%%%%%%%%%%%%%%%%%%%%%%%%%
\subsection{Two-dimensional test cases}

\begin{figure} [H]
\centering
\vspace{0pt}  
\includegraphics[trim = 75mm 95mm 75mm 60mm, clip=true,width=0.9\textwidth]{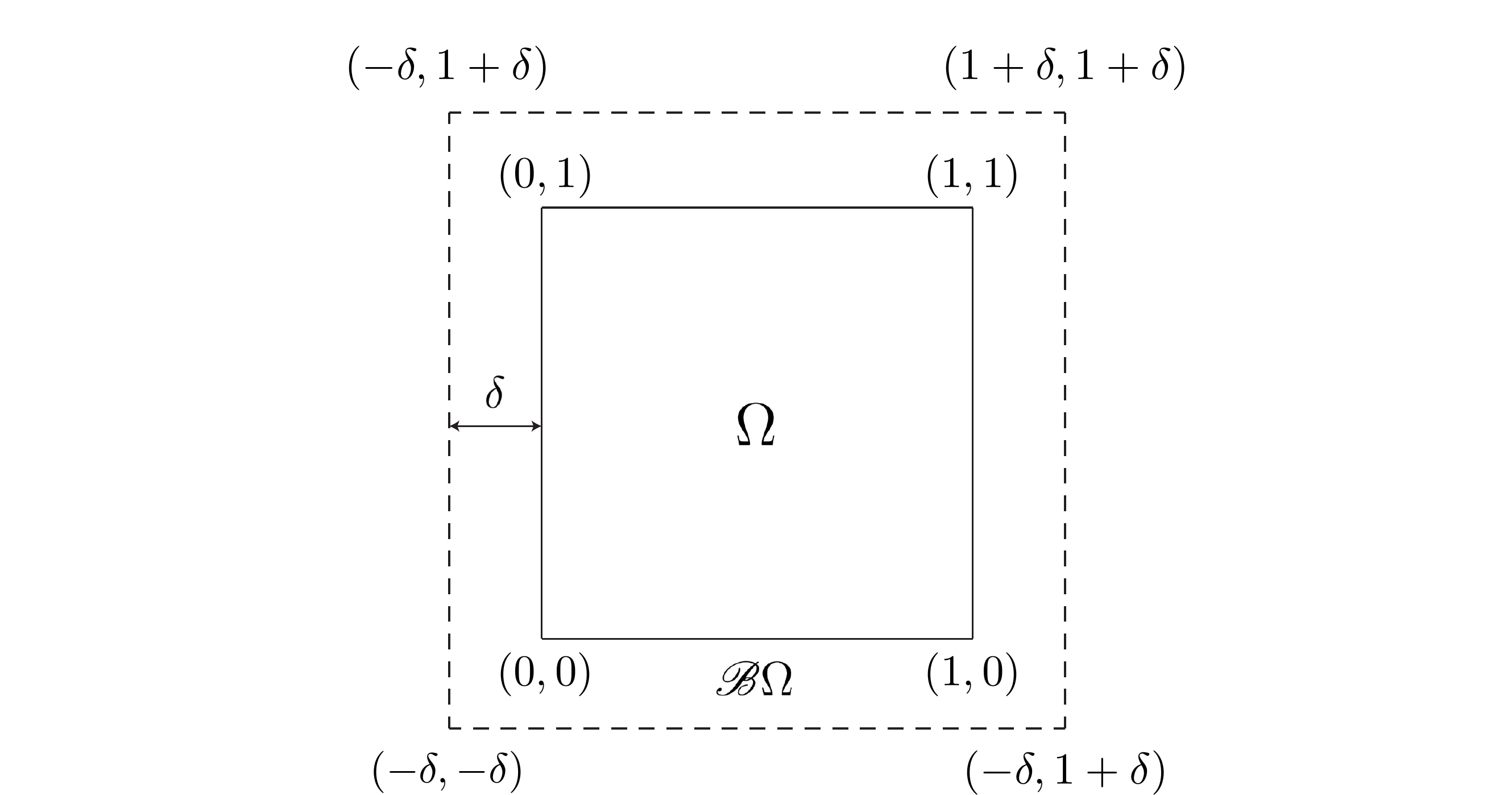}
\caption{Two-dimensional domain $\Omega$, with associated boundary layer $\mathscr{B}\Omega$}
\label{PD_2Ddomain_figure}
\end{figure}

We consider the two-dimensional domain $\Omega=(0,1)\times(0,1)$ with associated interaction domain $\mathscr{B}\Omega=\left([-\delta,1+\delta]\times[-\delta,1+\delta]\right)\setminus\Omega$. 
%As in the one-dimensional case, the inner domain, where the function $u$ is unknown (see Eq. (\ref{strong form_NLD.})), is considered constant in size, while the boundary layer varies with the value of $\delta$. Again, this is so 
As in the previous section, this guarantees that in the convergence studies the inner solution domain $\Omega$ remains consistent during the refinement ($\delta\rightarrow 0$), so that the $L^2$ error norms are comparable for all $\delta$. We consider two kernel functions: a constant influence function

\begin{equation}
    \gamma_{2,c}(\mathbf{x},\mathbf{y}) = \left\{\begin{aligned}
         \ \frac{4}{\pi\delta^4} \quad\ &\rm{for}\ \lVert\mathbf{y}-\mathbf{x}\rVert\leq\delta,\\\
        \ 0 \ \ \quad\ &\rm{for}\ \lVert\mathbf{y}-\mathbf{x}\rVert>\delta,\\
\end{aligned}\right.   
\label{kernel_form1_2D}
\end{equation}
and a rational one

\begin{equation}
    \gamma_{2,r}(\mathbf{x},\mathbf{y}) =\left\{\begin{aligned}
         \ \frac{3}{\pi\delta^3 \lVert\mathbf{y}-\mathbf{x}\rVert} \quad \ &\rm{for}\ \lVert\mathbf{y}-\mathbf{x}\rVert\leq\delta,\\\
        \ 0 \ \ \quad\ &\rm{for}\ \lVert\mathbf{y}-\mathbf{x}\rVert>\delta,\\
\end{aligned}\right.  
\label{kernel_form2_2D}
\end{equation}
which correspond to the expressions in Eqs. (\ref{kernel_form1}) and (\ref{kernel_form2}) for $\zeta=4/\pi$ and $\zeta=3/\pi$, respectively. As for the one-dimensional case, these values of $\zeta$ are such that

\begin{equation}
\lim_{\delta\to0}\mathcal{L}_{\delta}u(\mathbf{x})=\Delta u(\mathbf{x}).
\label{limit_for2D_operator}
\end{equation}

As discussed in 
Section \ref{sec:Nonlocal_diff_model}, the supports of the kernels presented in (\ref{kernel_form1_2D}) and (\ref{kernel_form2_2D}) correspond to circular Euclidean $\ell^2$ balls. However, kernels associated with $\ell^\infty$ balls (i.e., square supports) were also investigated and similar results as the ones presented in this section for Euclidean balls were obtained.
As before, we employ the method of manufactured solutions.
We select $u_0(\mathbf{x})=\sin(2\pi x_1)\sin(2\pi x_2)$, where $\mathbf{x}=(x_1,x_2)$, which corresponds to $g(\mathbf{x})=\sin(2\pi x_1)\sin(2\pi x_2)$ and to the following source term:

\begin{equation}
\begin{aligned}
    b(\mathbf{x}) &= -\Delta u_0(\mathbf{x})\\
    &= -\Delta\left( \sin(2\pi x_1)\sin(2\pi x_2)\right)\\
    &= 8\pi^2\sin(2\pi x_1)\sin(2\pi x_2).
\end{aligned}
\end{equation}

%%%%%%%%%%%%%%%%%%%%%%%%%%%%%%%%%%%%%%%
\subsubsection{Uniform discretizations}
As for the one-dimensional case, we first investigate the convergence behavior for uniform discretizations. The two-dimensional uniform mesh is constructed as a tensor product $\mathcal{M}^{h,u}_{2}=\mathcal{M}^{h,u}_{x_2}\times\mathcal{M}^{h,u}_{x_1}$, where $\mathcal{M}^{h,u}_{x_1}$ and $\mathcal{M}^{h,u}_{x_2}$ are one-dimensional uniform meshes of size $h$ over $\left[-\delta,0\right]\cup\left(0,1\right)\cup\left[1,1+\delta\right]$. For the convergence study, we set $t_e=\delta$, ${\overline{N}_{qp}=64}$, and use a four by four Gauss quadrature rule for the outer integral (${N}_{q}=16$). Figures \ref{2D_CG_U_OutGauss_IGMLS_sine_f3_bx1_2del_m2_IF5_No4_Ni8} and \ref{2D_CG_U_OutGauss_IGMLS_sine_f3_bx1_2del_m2_IF4_No4_Ni8} show the obtained results for $\gamma_{2,c}$ and $\gamma_{2,r}$, respectively. Up to the considered level of refinement, we observe a second-order convergence rate in the $L^2$ norm for both kernels.

\begin{figure}[H] 
\begin{center}
\subfigure[$\gamma_{2,c}$]{\includegraphics[trim = 15mm 75mm 15mm 80mm, clip=true,width=0.49\textwidth]{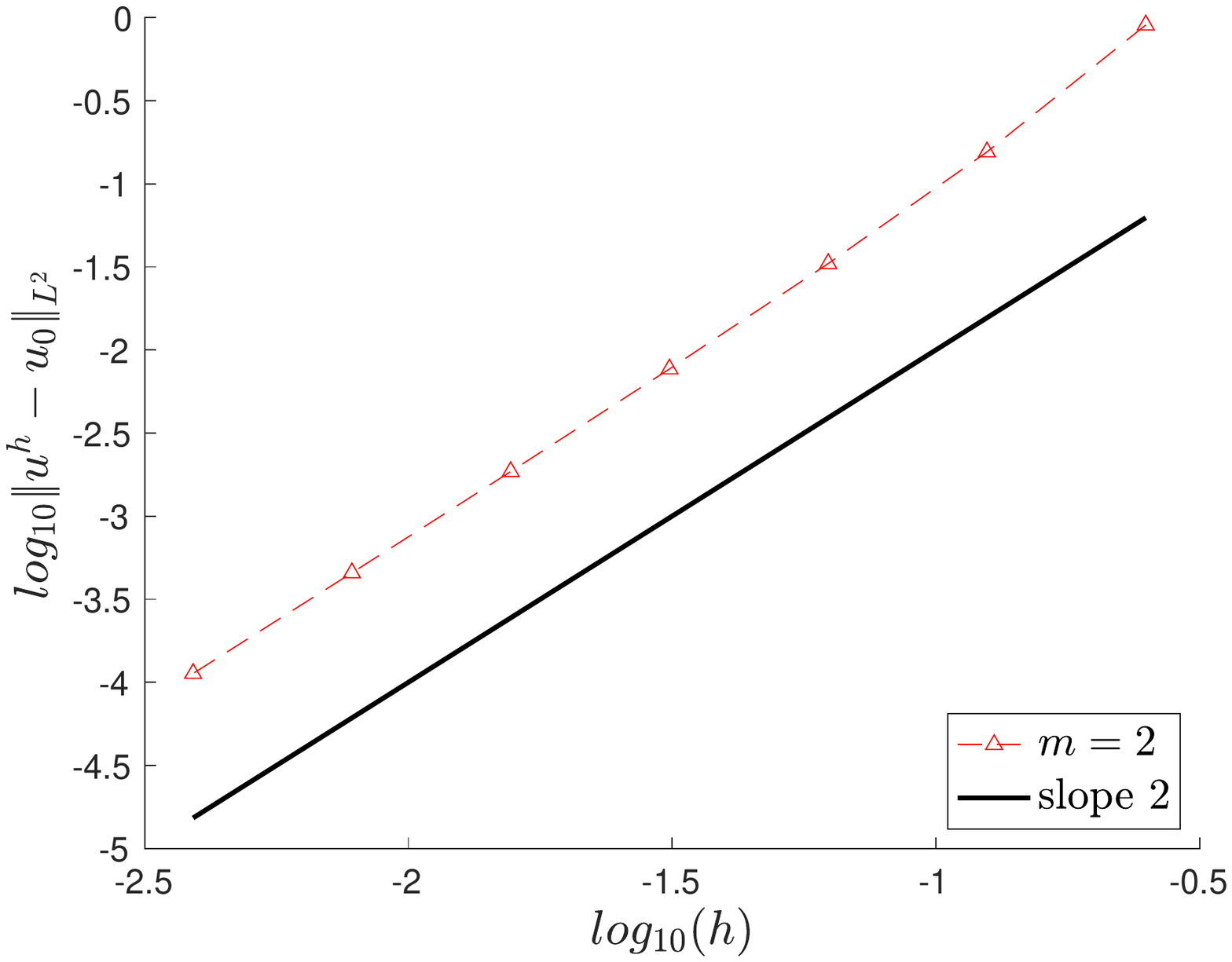}\label{2D_CG_U_OutGauss_IGMLS_sine_f3_bx1_2del_m2_IF5_No4_Ni8}} 
\subfigure[$\gamma_{2,r}$]{\includegraphics[trim = 15mm 75mm 15mm 80mm, clip=true,width=0.49\textwidth]{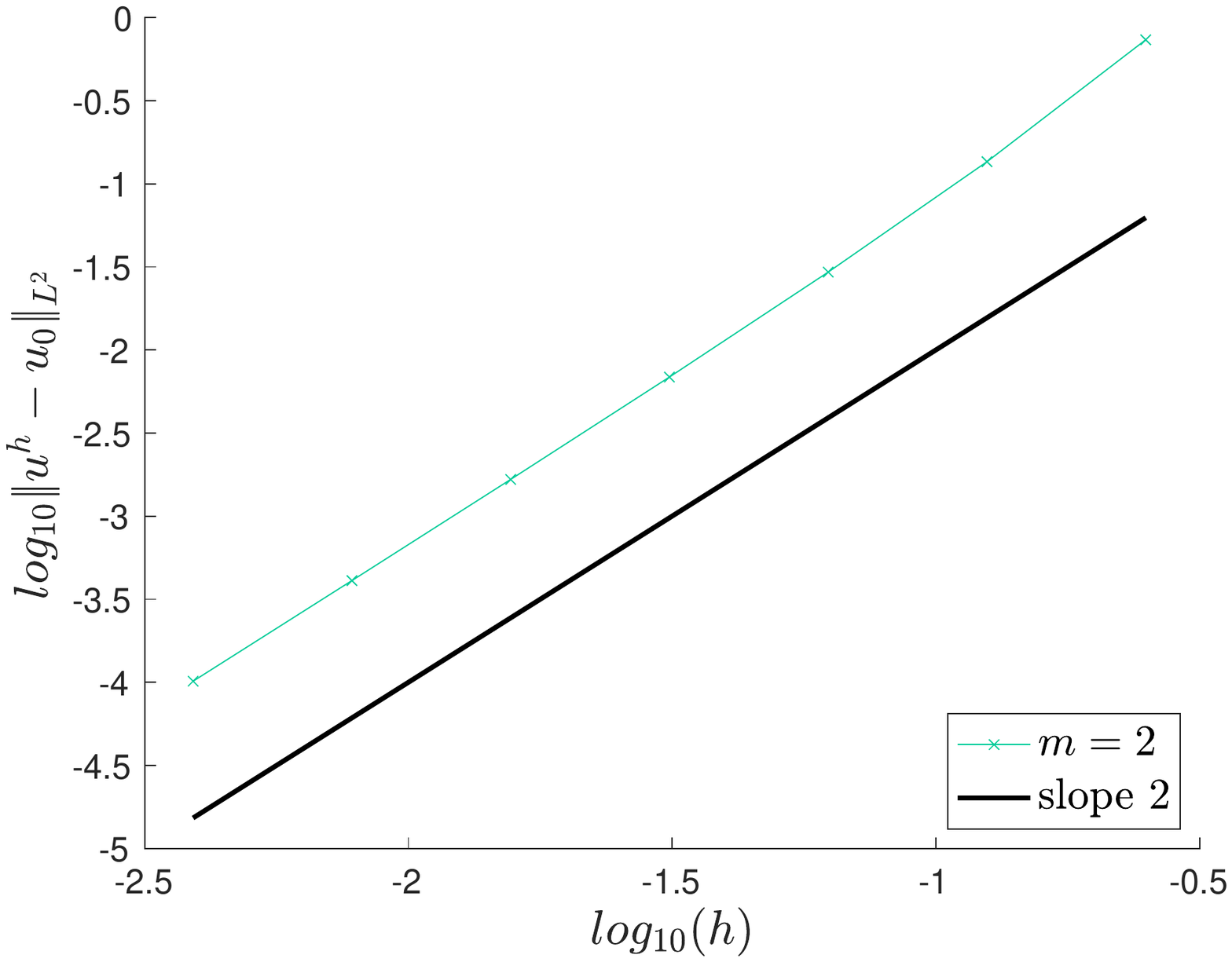}\label{2D_CG_U_OutGauss_IGMLS_sine_f3_bx1_2del_m2_IF4_No4_Ni8}} 
\caption{$L^2$ convergence behaviors of the two-dimensional numerical solutions for $m=2$, uniform discretization, and $t_e=\delta$. ${N}_{q}=16$ (as $4\times4$) and ${\overline{N}_{qp}=64}$. $\gamma_{2,c}$ and $\gamma_{2,r}$ are both considered.}
\label{2D_CG_U_OuterGauss_InnerGMLS_sine}
\end{center}
\end{figure}

\subsubsection{Nonuniform discretizations}
In this section, we investigate the performance of the proposed quadrature approach for two-dimensional non-uniform discretizations. We construct the two-dimensional non-uniform mesh as a tensor product of one-dimensional non-uniform discretization, i.e., $\mathcal{M}^{h,nu}_{2}=\mathcal{M}^{h,nu}_{x_1}\times\mathcal{M}^{h,nu}_{x_2}$, where $\mathcal{M}^{h,nu}_{x_1}$ and and $\mathcal{M}^{h,nu}_{x_2}$ are obtained by perturbing $\mathcal{M}^{h,u}_{x_1}$ and and $\mathcal{M}^{h,u}_{x_2}$, which are uniform meshes with spacing $h$ over $\left[-\delta,0\right]\cup\left(0,1\right)\cup\left[1,1+\delta\right]$. Similarly to the one-dimensional non-uniform case, the perturbation is achieved by moving the finite element nodes in $(0,1)$ and $(-\delta,0)\cup(1,1+\delta)$ from their original positions $x_1^u$ and $x_2^u$ to new randomly selected positions $x_1^{nu}=x_1^{u}+\epsilon h R_a$ and $x_2^{nu}=x_2^{u}+\epsilon h R_a$, where $\epsilon$ is a chosen perturbation factor and $R_a$ is a random number in $\left[-1,1\right]$.
For a visual example of $\mathcal{M}^{h,nu}_{2}$, see Figure \ref{Discretizations_schemes_2D}.

\begin{figure} [H]
\centering
\vspace{0pt}  
\includegraphics[trim = 550mm 265mm 75mm 250mm, clip=true,width=0.5\textwidth]{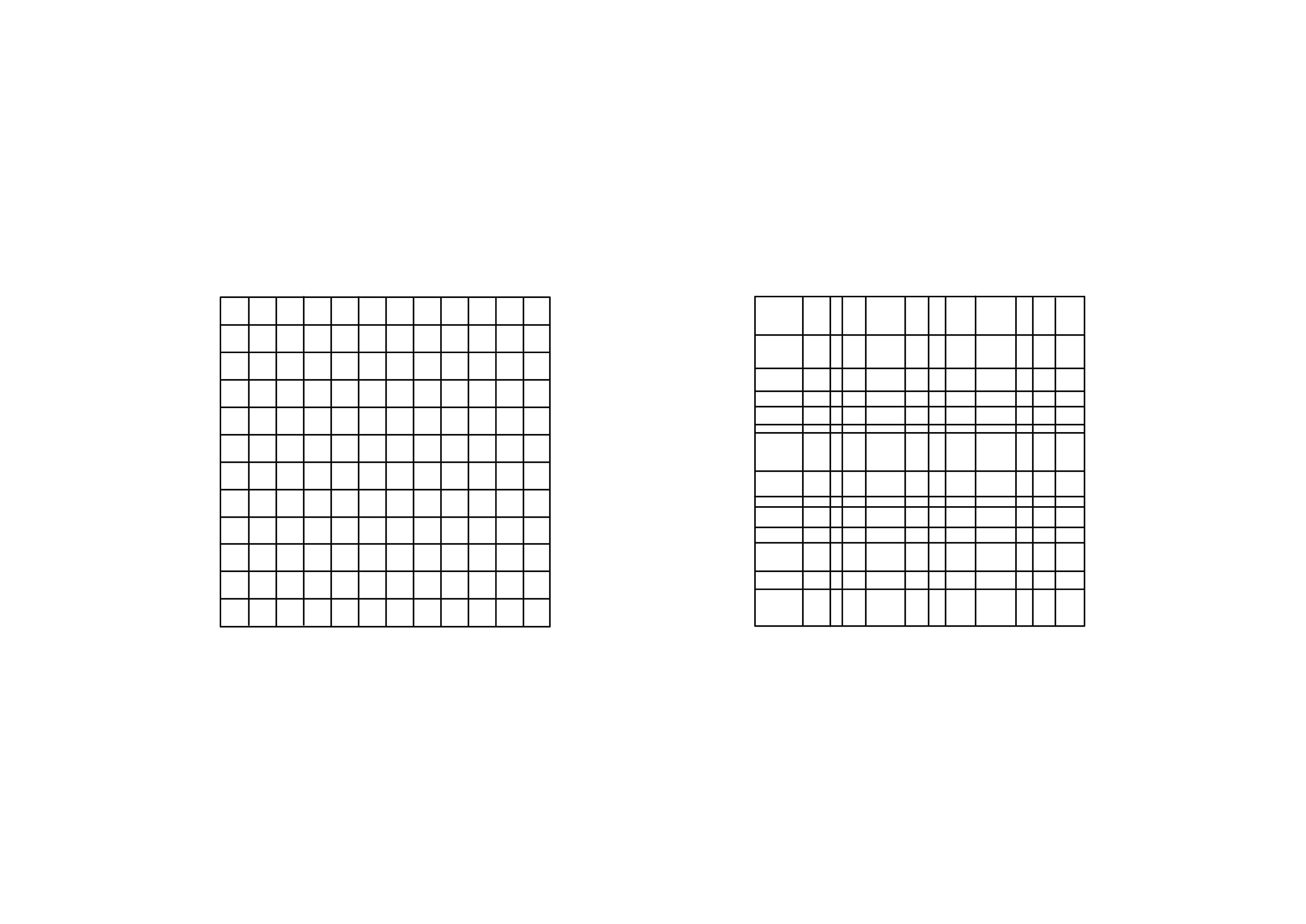}
\caption{Example of two-dimensional non-uniform mesh obtained as a tensor product of  perturbed one-dimensional meshes}
\label{Discretizations_schemes_2D}
\end{figure}

For the convergence studies we use $t_e=\delta$, ${\overline{N}_{qp}=64}$, and ${N}_{q}=16$ (four by four Gauss quadrature), with $\epsilon=0.1$. Figures \ref{2D_CG_NU_OutGauss_IGMLS_sine_f3_bx1_2del_m2_IF5_No4_Ni8} and \ref{2D_CG_NU_OutGauss_IGMLS_sine_f3_bx1_2del_m2_IF4_No4_Ni8} show the results for $\gamma_{2,c}$ and $\gamma_{2,r}$, respectively. Up to the considered level of refinement, we observe a second-order convergence rate in the $L^2$ norm for both kernels also for the non-uniform case.  As discussed in more detail for the one-dimensional nonuniform case in Section \ref{numerical_1D_nonuniform}, we conjecture that this rate is pre-asymptotic, and that the first-order asymptotic regime is difficult to observe in practice.

\begin{figure}[H] 
\begin{center}
\subfigure[$\gamma_{2,c}$]{\includegraphics[trim = 15mm 75mm 15mm 80mm, clip=true,width=0.49\textwidth]{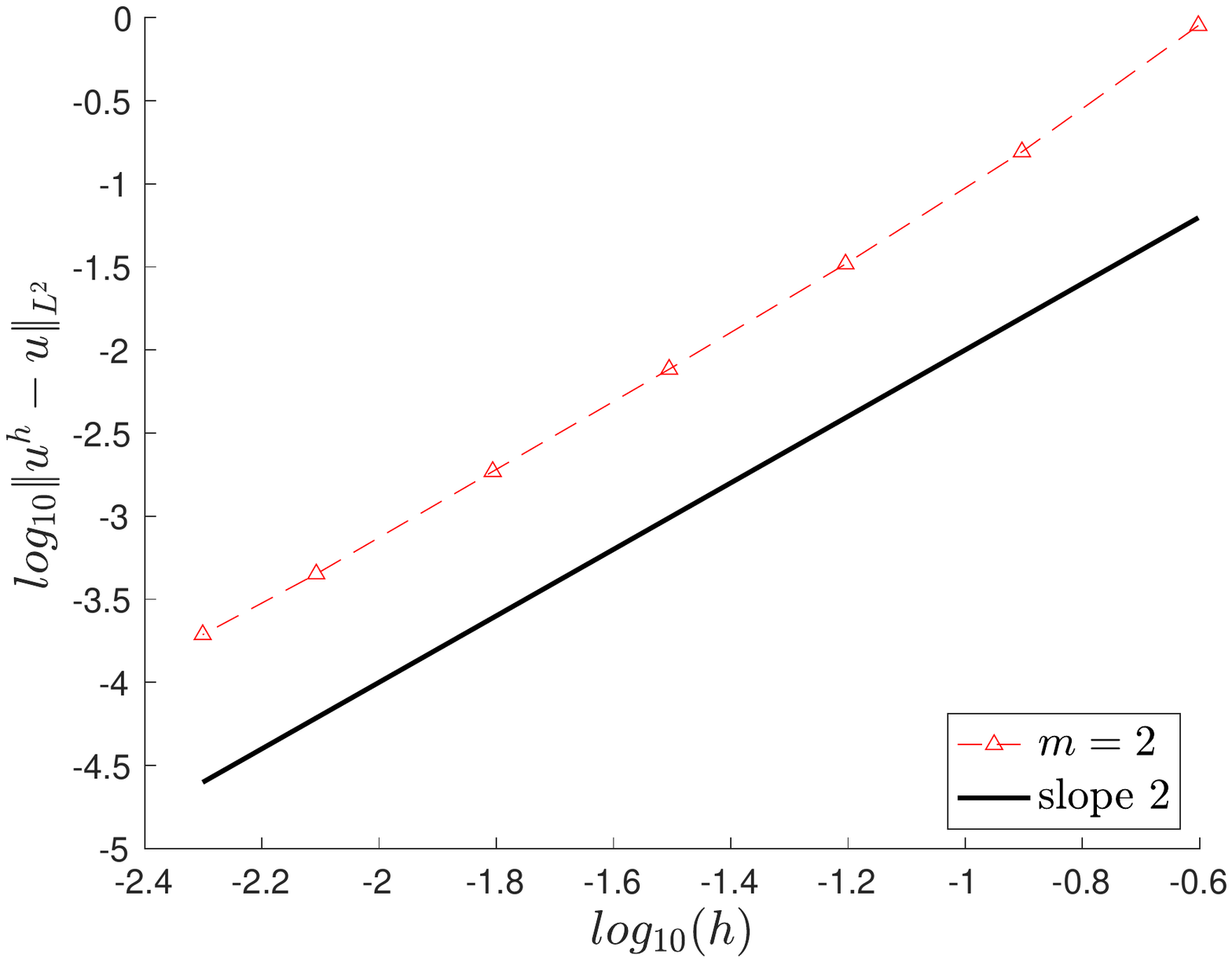}\label{2D_CG_NU_OutGauss_IGMLS_sine_f3_bx1_2del_m2_IF5_No4_Ni8}} 
\subfigure[$\gamma_{2,r}$]{\includegraphics[trim = 15mm 75mm 15mm 80mm, clip=true,width=0.49\textwidth]{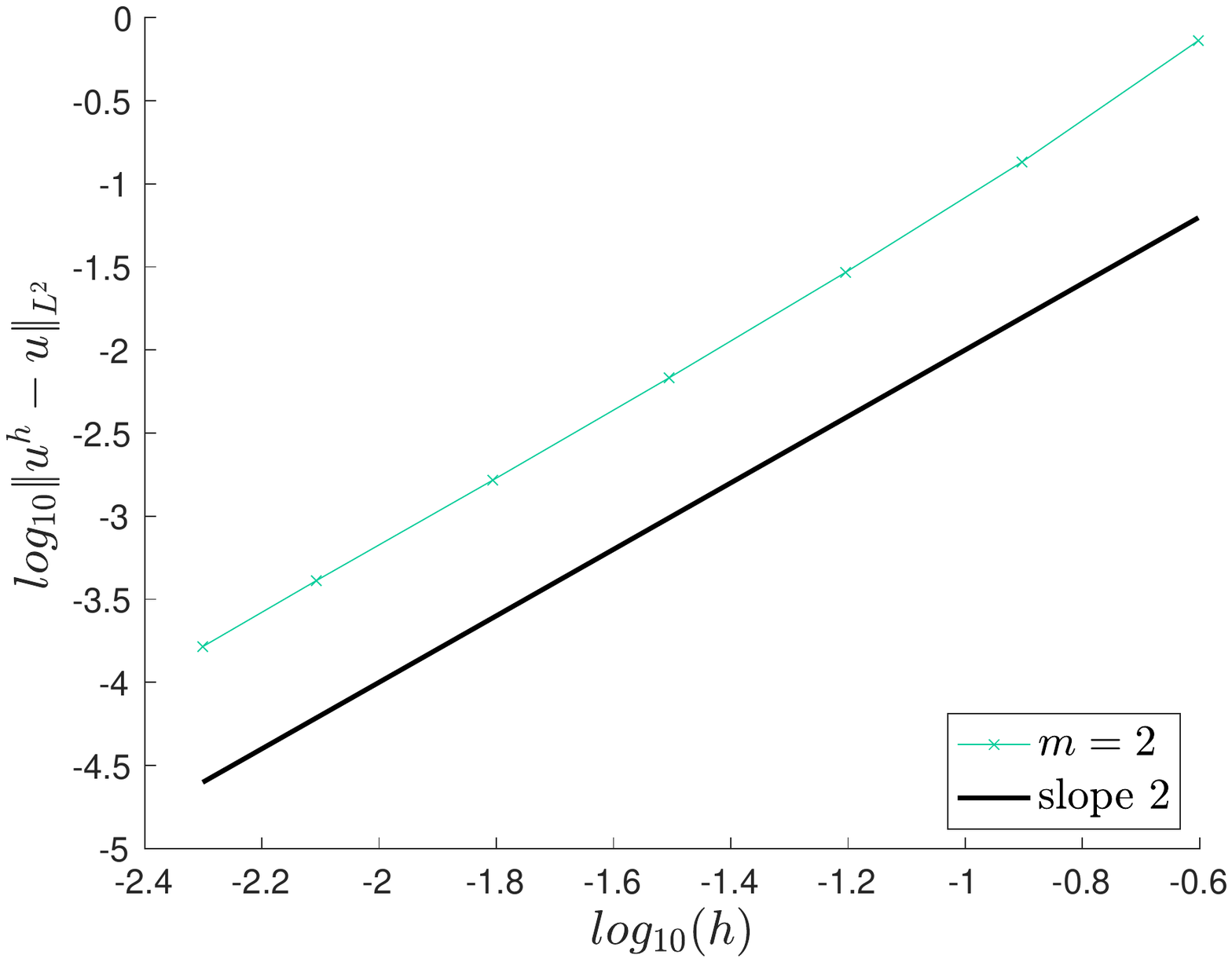}\label{2D_CG_NU_OutGauss_IGMLS_sine_f3_bx1_2del_m2_IF4_No4_Ni8}} 
\caption{$L^2$ convergence behaviors of the two-dimensional numerical solutions for $m=2$, non-uniform discretization with $\epsilon=0.1$, and $t_e=\delta$. ${N}_{q}=16$ (as $4\times4$) and ${\overline{N}_{qp}=64}$. $\gamma_{2,c}$ and $\gamma_{2,r}$ are both considered.}
\label{2D_CG_NU_OuterGauss_InnerGMLS_sine}
\end{center}
\end{figure}

%[MP: Should we make more comments as to why we stop at these refinements in 2D, similarly to Zhaoxiang's thesis?]

%%%%%%%%%%%%%%%%%%%%%%%%%%%%%%%%%%%%%%%%%%%%
\section{Conclusions}
\label{sec:conclusion}
We proposed a novel quadrature rule for the computation of integrals that arise in the matrix assembly of finite-element discretizations of nonlocal problems. In contrast to all previously employed methods, our technique does not require element-by-element integration, but relies on global integration over the nonlocal neighborhood. Specifically, we consider quadrature rules based on the generalized moving least squares method where the (global) quadrature weights are obtained by solving an equality-constrained optimization problem. The major advantage of this technique is the fact that the computation of element--ball intersections, a nontrivial and time consuming task, is avoided. Additionally, this technique requires minimal implementation effort, as it can be implemented in an existing finite element code. For this reason, we expect the proposed approach to become a building block of agile engineering codes. Our numerical experiments show that, when boundary conditions are treated carefully and the outer integral is computed accurately, our method is asymptotically compatible in the limit of $h\sim\delta\to 0$, featuring at least first-order convergence in $L^2$ for all dimensions and for both uniform and nonuniform grids. For piecewise linear finite-element implementations, in the case of uniform grids, our method features an optimal, second-order convergence rate in $L^2$ and passes the patch test.  For nonuniform grids, we see effective second-order convergence over a long pre-asyptotic regime, whereas the asymptotic first-order convergence is only evident in deviations from the patch test, which are very small relative to errors in more complicated solutions.  Convergence rates in $H^1$ are consistently one order lower than the $L^2$ rates.

We also carry out a preliminary numerical analysis of the method, but using the $H^1$ norm and restricted to the case of $h = \delta$ in one spatial dimension.  This analysis is consistent with the convergence rates observed in numerical experiments, but it does not account for the increase in convergence rate when measuring the $L^2$ norm instead of $H^1$.  As such, we believe that an interesting future direction for numerical analysis of this quadrature scheme would be to obtain sharp $L^2$ error estimates.  

%{\color{blue} Our numerical analysis was restricted to the special case of $h=\delta$, on a uniform mesh (although some lemmas are already more general).  Numerical experiments suggest that this analysis is not sharp, since Theorem \ref{thm:conv} ensures only first order convergence, while uniform meshes appear to exhibit second-order convergence.  However, numerical experiments also show that we should not expect to find second-order asymptotic convergence on nonuniform meshes, so we would not expect a sharper analysis to generalize to nonuniform cases of practical interest.  We therefore believe that the most fruitful future direction for numerical analysis of this approach is to attempt to generalize Theorem \ref{thm:conv} to nonuniform meshes.  However, it is also possible that a sharper analysis of the uniform case could yield additional insight into the approach.} {\color{red}[Maybe this part could be rephrased since we now have a remark on the $O(1)$ error in $H^1$ norm for nonuniform meshes.]}

\section*{Acknowledgements}
Kamensky and Pasetto were supported by start-up funding from the University of California San Diego. The work of Tian was partially supported by the National Science
Foundation grant DMS-2111608.
D'Elia and Trask are supported by the Sandia National Laboratories (SNL) Laboratory-directed Research and Development program and by the U.S. Department of Energy, Office of Advanced Scientific Computing Research under the Collaboratory on Mathematics and Physics-Informed Learning Machines for Multiscale and Multiphysics Problems (PhILMs) project. SNL is a multimission laboratory managed and operated by National Technology and Engineering Solutions of Sandia, LLC., a wholly owned subsidiary of Honeywell International, Inc., for the U.S. Department of Energy's National Nuclear Security Administration under contract {DE-NA0003525}. This paper, SAND2022-0834, describes objective technical results and analysis. Any subjective views or opinions that might be expressed in this paper do not necessarily represent the views of the U.S. Department of Energy or the United States Government.

\appendix\section{One-dimensional inner quadrature weights}\label{sec:1d-weights}

In this appendix, in order to verify the assumptions on the optimization-based quadrature weights employed in Section \ref{proof_coercivity}, we derive explicit expressions for the optimization-based inner quadrature weights in a one-dimensional setting and for the constant kernel function $\gamma(x,y)=\frac{\zeta}{\delta^3}$ defined in \eqref{kernel_form1}.
Recall from Section \ref{quadrature_discretization} that the inner quadrature points are positioned in $\mathscr{H}(x,\delta)$ according to

\begin{equation}\label{1d_quadpoints_x}
    x_{qp}=x_q+(2k-\sgn(k))\frac{\overline{h}}{2}, \;\;\;\; -\overline{N}_{qp,\delta}\leq k\leq \overline{N}_{qp,\delta}
\end{equation}
with $k\in\mathbb{Z}\setminus\left\{0\right\}$ and 

\begin{equation}\label{1d_quadpoints_hbar}
    \overline{h}=\frac{\delta}{\overline{N}_{qp,\delta}}=\frac{m}{\overline{N}_{qp,\delta}}h=\upsilon h,
\end{equation}
where we defined $\upsilon=m/\overline{N}_{qp,\delta}$. Following the procedure outlined in Section \ref{sec:GMLS_construction}, we have

\begin{equation}\label{1d_quadpoints_B}
\begin{aligned}
    \mathbf{B}&=\frac{\zeta}{\delta^3}\begin{bmatrix}
\left(x_q-x_{-\overline{N}_{qp,\delta}}\right)^2 & \ldots & \left(x_q-x_{i}\right)^2 & \ldots & \left(x_q-x_{\overline{N}_{qp,\delta}}\right)^2
\end{bmatrix}\\
&=\frac{\zeta \overline{h}^2}{4\delta^3}\left[\begin{matrix}
\left(-2N_{qp,\delta}-\sgn(-N_{qp,\delta})\right)^2 & \ldots & \left(2i-\sgn(i)\right)^2\end{matrix}\right. \\ &\left.\begin{matrix} & \ldots & \left(2N_{qp,\delta}-\sgn(N_{qp,\delta})\right)^2\end{matrix}\right]
\end{aligned}
\end{equation}
and

\begin{equation}\label{1d_quadpoints_S}
\begin{aligned}
    S&=\mathbf{B}\mathbf{B}^T=\frac{\zeta^2 \overline{h}^4}{16\delta^6}\sum_{\substack{k=-\overline{N}_{qp,\delta}\\
    k\neq 0}}^{\overline{N}_{qp,\delta}}\left[2(-k)-\sgn(k)\right]^4\\&=\frac{\zeta^2 \overline{h}^4}{16\delta^6}\left[\frac{2}{15}\left(7\overline{N}_{qp,\delta}-40\overline{N}_{qp,\delta}^3+48\overline{N}_{qp,\delta}^5\right)\right],
\end{aligned}
\end{equation}
which leads to 

\begin{equation}\label{1d_quadpoints_S_inverse}
\begin{aligned}
    S^{-1}&=\frac{16\delta^6}{\zeta^2 \overline{h}^4}\frac{1}{\left[\frac{2}{15}\left(7\overline{N}_{qp,\delta}-40\overline{N}_{qp,\delta}^3+48\overline{N}_{qp,\delta}^5\right)\right]}.
\end{aligned}
\end{equation}
For the choice of $\mathbf{V}_h$ defined in (\ref{Vh_exact_constraints}),

\begin{equation}\label{1d_quadpoints_g}
\begin{aligned}
    g=\frac{2}{3}\zeta.
\end{aligned}
\end{equation}
Therefore, from Eqs. (\ref{gmls_final_equation}),  (\ref{1d_quadpoints_x}), (\ref{1d_quadpoints_B}), (\ref{1d_quadpoints_S_inverse}), and (\ref{1d_quadpoints_g})

\begin{equation}\label{1d_quadpoints_final_weights}
    \begin{aligned}
      \boldsymbol{\omega}&=\mathbf{B}^\mathrm{T}{S}^{-1}{g}\\
      &=\frac{\zeta \overline{h}^2}{4\delta^3}\begin{bmatrix}
\left(-2\overline{N}_{qp,\delta}-\sgn(-\overline{N}_{qp,\delta})\right)^2 \\ \ldots \\ \left(2i-\sgn(i)\right)^2 \\ \ldots \\ \left(2\overline{N}_{qp,\delta}-\sgn(\overline{N}_{qp,\delta})\right)^2
\end{bmatrix}\\&\phantom{=}\frac{16\delta^6}{\zeta^2 \overline{h}^4}\frac{1}{\left[\frac{2}{15}\left(7\overline{N}_{qp,\delta}-40\overline{N}_{qp,\delta}^3+48\overline{N}_{qp,\delta}^5\right)\right]}\frac{2}{3}\zeta\\
&=\frac{8\delta^3}{3\overline{h}^2}\frac{1}{\left[\frac{2}{15}\left(7\overline{N}_{qp,\delta}-40\overline{N}_{qp,\delta}^3+48\overline{N}_{qp,\delta}^5\right)\right]}\begin{bmatrix}
\left(-2\overline{N}_{qp,\delta}-\sgn(-\overline{N}_{qp,\delta})\right)^2 \\ \ldots \\ \left(2i-\sgn(i)\right)^2 \\ \ldots \\ \left(2\overline{N}_{qp,\delta}-\sgn(\overline{N}_{qp,\delta})\right)^2
\end{bmatrix}\\
&=\frac{8\delta\overline{N}_{qp,\delta}}{\left[\frac{2}{15}\left(7\overline{N}_{qp,\delta}-40\overline{N}_{qp,\delta}^3+48\overline{N}_{qp,\delta}^5\right)\right]}\begin{bmatrix}
\left(-2\overline{N}_{qp,\delta}-\sgn(-\overline{N}_{qp,\delta})\right)^2 \\ \ldots \\ \left(2i-\sgn(i)\right)^2 \\ \ldots \\ \left(2\overline{N}_{qp,\delta}-\sgn(\overline{N}_{qp,\delta})\right)^2
\end{bmatrix}\\
&=\frac{8\delta}{\left[\frac{2}{15}\left(7-40\overline{N}_{qp,\delta}^2+48\overline{N}_{qp,\delta}^4\right)\right]}\begin{bmatrix}
\left(-2\overline{N}_{qp,\delta}-\sgn(-\overline{N}_{qp,\delta})\right)^2 \\ \ldots \\ \left(2i-\sgn(i)\right)^2 \\ \ldots \\ \left(2\overline{N}_{qp,\delta}-\sgn(\overline{N}_{qp,\delta})\right)^2
\end{bmatrix}.
    \end{aligned}
\end{equation}
It has to be noted that, for $\overline{N}_{qp,\delta}\in\mathbb{N}$, $\omega_k>0$, $\forall k$. Therefore, in this case, all the quadrature weights are positive and there exists a generic constant $C > 0$ independent of $h$ and $\delta$, such that $C\delta < \omega_\text{min}$, with $\omega_\text{min}=\min\limits_{k}\{\omega_k\}$.

%% The Appendices part is started with the command \appendix;
%% appendix sections are then done as normal sections
%% \appendix

%% \section{}
%% \label{}

%% References
%%
%% Following citation commands can be used in the body text:
%% Usage of \cite is as follows:
%%   \cite{key}          ==>>  [#]
%%   \cite[chap. 2]{key} ==>>  [#, chap. 2]
%%   \citet{key}         ==>>  Author [#]

%% References with bibTeX database:

%\section*{\refname}
\bibliographystyle{model1-num-names}
\bibliography{sample.bib}

%% Authors are advised to submit their bibtex database files. They are
%% requested to list a bibtex style file in the manuscript if they do
%% not want to use model1-num-names.bst.

%% References without bibTeX database:

% \begin{thebibliography}{00}

%% \bibitem must have the following form:
%%   \bibitem{key}...
%%

% \bibitem{}

% \end{thebibliography}

\end{document}